\numberwithin{equation}{section}
\newtheorem{thm}{Theorem}
\newtheorem{lem}{Lemma}
\newtheorem{cor}{Corollary}
\newtheorem{prop}{Proposition}
\newtheorem{defn}{Definition}
\newtheorem{rem}{Remark}
\def\a{\mathsf{a}}
\def\b{\mathsf{b}}
\def\l{\mathsf{c}}
\def\m{\mathsf{d}}
\def\g{\gamma}
\def\r{\mathsf{r}}
\def\leq{\leqslant}
\def\Z{{\mathbb Z}}
\def\C{{\mathbb C}}
\def\N{\mathbb{N}}
\def\ot{\mathsf{t}}
\def\og{\mathsf{g}}
\def\oe{\mathsf{e}}
\def\spec{{\mathrm{Spec}}_{\rm JM}}
\def\cont{{\mathrm{Cont}}_d(n)}
\newcommand{\cc}{\mathrm{c}}
\newcommand{\ccc}{\mathrm{cc}}
\newcommand{\pos}{\mathrm{p}}
\def\cT{\mathcal{T}}
\def\bv{\mathbf{v}}
\newcommand{\blambda}{\boldsymbol{\lambda}}
\newcommand{\bmu}{\boldsymbol{\mu}}
\newcommand{\btau}{\boldsymbol{\tau}}
\newcommand{\btheta}{\boldsymbol{\theta}}
\newcommand{\bpsi}{\boldsymbol{\psi}}
\newcommand{\gad}{\Gamma_d}
\begin{document}

\title[Representation theory of the Yokonuma--Hecke algebra]
  {Representation theory of the Yokonuma--Hecke algebra}
\author{Maria Chlouveraki}
\address{Laboratoire de Math\'ematiques UVSQ, B\^atiment Fermat, 45 avenue des \'Etats-Unis,  78035 Versailles cedex, France.}
\email{maria.chlouveraki@uvsq.fr}

\author{Lo\"ic Poulain d'Andecy}
\address{Laboratoire de Math\'ematiques UVSQ, B\^atiment Fermat, 45 avenue des \'Etats-Unis,  78035 Versailles cedex, France.}
\email{loic.poulain-d-andecy@uvsq.fr}

\subjclass[2010]{20C08, 05E10, 16S80}

\thanks{The research project is implemented within the framework of the Action ``Supporting Postdoctoral Researchers'' of the Operational Program
``Education and Lifelong Learning'' (Action's Beneficiary: General Secretariat for Research and Technology), and is co-financed by the European Social Fund (ESF) and the Greek State. The first author would also like to thank Guillaume Pouchin for fruitful conversations and remarks.}

\keywords{Yokonuma--Hecke algebra, Jucys--Murphy elements, Affine Yokonuma--Hecke algebra, Representations, Partitions,  Standard tableaux, Schur elements.}

\begin{abstract} 
We develop an inductive approach to the representation theory  of the Yokonuma--Hecke algebra ${\rm Y}_{d,n}(q)$, based on the study of the spectrum of its Jucys--Murphy elements which are defined here. We give explicit formulas for the irreducible representations of ${\rm Y}_{d,n}(q)$ in terms of standard $d$-tableaux; we then use them to obtain a semisimplicity criterion. Finally, we prove the existence of a canonical symmetrising form on ${\rm Y}_{d,n}(q)$ and calculate the Schur elements with respect to that form. 
\end{abstract}

\maketitle

\section{Introduction}

Yokonuma--Hecke algebras were introduced by Yokonuma \cite{yo} in the context of Chevalley groups, as a generalisation of Iwahori--Hecke algebras. More precisely, the Iwahori--Hecke algebra associated to a finite Chevalley group $G$ is the centraliser algebra associated to the permutation representation of $G$ with respect to a Borel subgroup of $G$. The Yokonuma--Hecke algebra is the centraliser algebra associated to the permutation representation of $G$ with respect to a maximal unipotent subgroup of $G$. 
Thus, the Yokonuma--Hecke algebra can be also regarded as a particular case of a unipotent Hecke algebra. In this paper, we study the representation theory of the Yokonuma--Hecke algebra of type $A$, obtained in the case where $G$ is the general linear group over a finite field.

In recent years, the presentation of the Yokonuma--Hecke algebra has been transformed  by Juyumaya  \cite{ju, ju2, ju3} to the one used in this paper; the new presentation is given by generators and relations, depending on two positive integers, $d$ and $n$, and a parameter $q$.  For 
$q^2=p^m$ and $d=p^m-1$, where $p$ is a prime number and $m$ is a positive integer, 
the Yokonuma--Hecke algebra, denoted by ${\rm Y}_{d,n}(q)$, is the centraliser algebra associated to the permutation representation of 
${\rm GL}_n(\mathbb{F}_{p^m})$ 
with respect to a maximal unipotent subgroup.

Now, the Yokonuma--Hecke algebra ${\rm Y}_{d,n}(q)$ is a quotient of the group algebra of the framed braid group $\Z \wr B_n$ and of the  modular framed braid group
$ (\Z/d\Z) \wr B_n$, where $B_n$ is the classical braid group on $n$ strands (of type $A$). Hence,  ${\rm Y}_{d,n}(q)$ has a natural topological interpretation in the context of framed braids and framed knots. In view of this, Juyumaya defined a Markov trace on ${\rm Y}_{d,n}(q)$ \cite{ju3}, which was later used by Lambropoulou and himself to define an invariant for framed knots \cite{jula1, jula2}. What is more, it was subsequently proved that this invariant can be extended to classical and singular knots \cite{jula3, jula4}; it is the only known knot invariant with this property.

Moreover, the Yokonuma--Hecke algebra ${\rm Y}_{d,n}(q)$ can be regarded as a deformation of the group algebra of the complex reflection group $G(d,1,n) \cong (\Z/d \Z) \wr \mathfrak{S}_n$, where $\mathfrak{S}_n$ denotes the symmetric group on $n$ letters. For $d=1$, the algebra  ${\rm Y}_{1,n}(q)$ coincides with the Iwahori--Hecke algebra $\mathcal{H}_n(q)$ of type $A$.

The Yokonuma--Hecke algebra is quite different from the other famous deformation of the group algebra of $G(d,1,n)$, the Ariki--Koike algebra \cite{AK}. 
As mentioned above, the Yokonuma--Hecke algebra is a quotient of the group algebra of the modular framed braid group
$ (\Z/d\Z) \wr B_n$. The Ariki--Koike algebra is a quotient of the group algebra of the braid group of type $B$. Thus, 
in some sense, 
the Yokonuma--Hecke algebra is a deformation where the wreath product structure of  $G(d,1,n)$ is respected; in 
the case of the Ariki--Koike algebra this structure is lost in view of the preservation of the classical Hecke quadratic relation. On the other hand, this preservation of the quadratic relation makes the Iwahori--Hecke algebra of type $A$ an obvious subalgebra of the Ariki--Koike algebra.
As far as the Yokonuma--Hecke algebra is concerned, the Iwahori--Hecke algebra $\mathcal{H}_n(q)$ of type $A$ is an obvious quotient of  ${\rm Y}_{d,n}(q)$,  but not an obvious subalgebra.

In this paper, we will study the representation theory of this object of high algebraic and topological interest. Some information on its representation theory in the general context of unipotent Hecke algebras has been obtained by Thiem in \cite{thi, thi2,thi3}. Unfortunately, the generality of his results and the change of  presentation for ${\rm Y}_{d,n}(q)$ do not allow us to put this information to practical use. 
In this paper, we develop an inductive, and highly combinatorial, approach to the representation theory of the Yokonuma--Hecke algebra. 
We give explicit formulas for all its irreducible presentations (in the semisimple case), which one can work with.
Thanks to our formulas we are able to obtain a semisimplicity criterion for ${\rm Y}_{d,n}(q)$, and calculate its Schur elements with respect to the canonical symmetrising form defined here, thus getting a glimpse of the modular representation theory of the Yokonuma--Hecke algebra. 

For $d=1$, the approach presented here coincides with the one in \cite{IO} for the Iwahori--Hecke algebra of type $A$. For $d\geq 1$, an inductive approach, in the spirit of \cite{OV}, was given in \cite{OPdA2} for the complex reflection group $G(d,1,n)$. Our approach for the Yokonuma--Hecke algebra can be seen as a deformed version of the approach in \cite{OPdA2} (different from the deformed version presented in \cite{OPdA1} for the Ariki--Koike algebra).

The paper is organised as follows: in Section $2$, we define the Yokonuma--Hecke algebra ${\rm Y}_{d,n}(q)$, and we suggest 
analogues for  ${\rm Y}_{d,n}(q)$ of the Jucys--Murphy elements of the symmetric group; these Jucys--Murphy elements for  ${\rm Y}_{d,n}(q)$ form a commutative family of elements.
We note that there are two types of Jucys--Murphy elements for  ${\rm Y}_{d,n}(q)$, similarly to the case of $G(d,1,n)$ and differently from the case of the Ariki--Koike algebra.

In Section $3$, we define  the affine Yokonuma--Hecke algebra $\widehat{\rm Y}_{d,n}(q)$, and we study a special class of representations in the simplest non-trivial case, when $n=2$. We use these representations to obtain information on the spectrum of the Jucys--Murphy elements of $\widehat{\rm Y}_{d,n}(q)$, in Section $4$.  This information allows us to show that each element in the spectrum of the Jucys--Murphy elements corresponds to a standard $d$-tableau of size $n$ (Propositions  \ref{spec-cont} and \ref{cont-tab}).

In Section $5$, we  give explicit formulas for the irreducible representations of  ${\rm Y}_{d,n}(q)$;  these formulas originate in the study of the simplest non-trivial affine Yokonuma--Hecke algebra $\widehat{\rm Y}_{d,2}(q)$. We show that the irreducible representations of  ${\rm Y}_{d,n}(q)$ are parametrised by the $d$-partitions of $n$. 
For a $d$-partition $\blambda$ of $n$, the irreducible representation $V_{\blambda}$ has a basis indexed by the standard $d$-tableaux of shape $\blambda$; thus, our formulas are in the combinatorial terms of standard $d$-tableaux.
We obtain that the  spectrum of the Jucys--Murphy elements  of  ${\rm Y}_{d,n}(q)$ is, in fact, in bijection with the set of standard $d$-tableaux of size $n$.
We also provide the branching rules for the Yokonuma--Hecke algebra, we construct a complete system of primitive idempotents and we show that the Jucys--Murphy elements generate a maximal commutative subalgebra of ${\rm Y}_{d,n}(q)$.
In Section $6$, we use the explicit formulas for the irreducible representations developed in Section $5$ to obtain a semisimplicity criterion for ${\rm Y}_{d,n}(q)$.

Finally, in Section $7$, we define a ``canonical'' symmetrising form $\btau$ on  ${\rm Y}_{d,n}(q)$, which becomes the canonical symmetrising form on the group algebra of $G(d,1,n)$ for $q=1$. 
We then determine the Schur elements for ${\rm Y}_{d,n}(q)$ with respect to $\btau$; these are powerful tools in the study of the modular representation theory of symmetric algebras.  In order to calculate them, we show that the Schur elements for ${\rm Y}_{d,n}(q)$ are products of Schur elements corresponding to Iwahori--Hecke algebras of type $A$, which are already known.

$ $\\
{\bf Piece of notation:} Throughout this paper, whenever $A$ is an algebra defined over a ring $R$ and $K$ is a field containing $R$, we will denote by $KA$ the tensor product $K \otimes_R A$.

\section{Jucys--Murphy elements of the Yokonuma--Hecke algebra}\label{sec-JM}

In this section we will give a presentation for the Yokonuma--Hecke algebra  ${\rm Y}_{d,n}(q)$ and we will define its Jucys--Murphy elements, which are the core of our construction of the representations of   ${\rm Y}_{d,n}(q)$.

\subsection{The Yokonuma--Hecke algebra ${\rm Y}_{d,n}(q)$}
Let  $d,\,n \in \N$ , $d \geq 1$. Let $q $ be an indeterminate.  The Yokonuma--Hecke algebra, denoted by ${\rm Y}_{d,n}(q)$, is a $\C[q,q^{-1}]$-associative algebra generated by the elements
$$
 g_1, \ldots, g_{n-1}, t_1, \ldots, t_n
$$
subject to the following relations:
\begin{equation}\label{modular}
\begin{array}{ccrclcl}
\mathrm{(b}_1)& & g_ig_j & = & g_jg_i && \mbox{for all  $i,j=1,\ldots,n-1$ such that $\vert i-j\vert > 1$,}\\
\mathrm{(b}_2)& & g_ig_{i+1}g_i & = & g_{i+1}g_ig_{i+1} && \mbox{for  all  $i=1,\ldots,n-2$,}\\
\mathrm{(f}_1)& & t_i t_j & =  &  t_j t_i &&  \mbox{for all  $i,j=1,\ldots,n$,}\\
\mathrm{(f}_2)& & t_j g_i & = & g_i t_{s_i(j)} && \mbox{for  all  $i=1,\ldots,n-1$ and $j=1,\ldots,n$,}\\
\mathrm{(f}_3)& & t_j^d   & =  &  1 && \mbox{for all  $j=1,\ldots,n$,}
\end{array}
\end{equation}
where $s_i$ is the transposition $(i, i+1)$, together with the quadratic
relations:
\begin{equation}\label{quadr}
g_i^2 = 1 + (q-q^{-1}) \, e_{i} \, g_i\qquad \mbox{for all $i=1,\ldots,n-1$}
\end{equation}
 where
\begin{equation}\label{ei}
e_i :=\frac{1}{d}\sum_{s=0}^{d-1}t_i^s t_{i+1}^{-s}.
\end{equation}

It is easily verified that the elements $e_i$ are idempotents in ${\rm Y}_{d,n}(q)$.  Also, that the elements $g_i$ are invertible, with
\begin{equation}\label{invrs}
g_i^{-1} = g_i - (q- q^{-1})\,e_i \qquad \mbox{for all $i=1,\dots,n-1$}.
\end{equation}

Moreover, note that we have 
\begin{equation}\label{ti}
t_{i+1}=g_i\,t_i\,g_i^{-1}=g_i^{-1}\,t_i\,g_i  \,\,\,\,\,\,\mbox{ for all $ i =1,\dots,n-1$.}
\end{equation}

If we specialise $q$ to $\pm1$, the defining relations (\ref{modular})--(\ref{quadr}) become the defining relations for the complex reflection group $G(d,1,n)$. Thus the algebra ${\rm Y}_{d,n}(q)$ is a deformation of the group algebra over $\C$ of the complex reflection group $G(d,1,n) \cong (\Z/d\Z) \wr \mathfrak{S}_n$, where $\mathfrak{S}_n$ is the symmetric group on $n$ letters.
Moreover, for $d=1$, the Yokonuma--Hecke algebra ${\rm Y}_{1,n}(q)$ coincides with the Iwahori--Hecke algebra $\mathcal{H}_n(q)$ of type $A$, and thus, for $d=1$ and $q$ specialised to $\pm1$, we obtain the group algebra over $\C$ of the symmetric group $\mathfrak{S}_n$.

The relations $({\rm b}_1)$, $({\rm b}_2)$, $({\rm f}_1)$ and $({\rm f}_2)$ are defining relations for the classical framed braid group $\mathcal{F}_n \cong \Z\wr B_n$, where $B_n$ is the classical braid group on $n$ strands, with the $t_j$'s being interpreted as the ``elementary framings" (framing 1 on the $j$th strand). The relations $t_j^d = 1$ mean that the framing of each braid strand is regarded modulo~$d$. Thus, the algebra ${\rm Y}_{d,n}(q)$ arises naturally  as a quotient of the framed braid group algebra over the modular relations ~$\mathrm{(f}_3)$ and the quadratic relations~(\ref{quadr}). Moreover, relations (\ref{modular}) are defining relations for the  modular framed braid group $\mathcal{F}_{d,n}\cong (\Z/d\Z) \wr B_n$, so the algebra ${\rm Y}_{d,n}(q)$ can be also seen  as a quotient of the modular framed braid group algebra over the  quadratic relations~(\ref{quadr}).

\begin{rem}{\rm 
Note that in all 
the papers \cite{ju3, jula2, jula3, chla} 
the parameter for the Yokonuma--Hecke algebra is denoted by $u$, and ${\rm Y}_{d,n}(u)$ is generated by elements $\tilde{g}_1, \ldots, \tilde{g}_{n-1}, t_1, \ldots, t_{n}$ satisfying relations (\ref{modular}) and the quadratic
relations:
\begin{equation}
\tilde{g}_i^2 = 1+ (u-1) \, e_{i} \, +(u-1) \, e_{i} \, \tilde{g}_i\qquad \mbox{for all $i=1,\ldots,n-1$}.
\end{equation}
By taking $u:=q^{2}$ and $g_i:= \tilde{g}_i+(q^{-1}-1)\, e_{i} \, \tilde{g}_i$ (and thus, $\tilde{g}_i = g_i+(q-1)\, e_{i} {g}_i$),
 we obtain our presentation of the Yokonuma--Hecke algebra. }
\end{rem}

\subsection{The split property of ${\rm Y}_{d,n}(q)$}
Due to the relations (\ref{modular})(${\rm f}_1$)--(\ref{modular})(${\rm f}_3$), every word $m$ in 
the generators $g_1,\ldots,g_{n-1},t_1,\ldots, t_n$ of 
${\rm Y}_{d,n}(q)$ can be written in the form
$$
m=t_1^{k_1}\ldots t_n^{k_n}\cdot \sigma
$$
where $k_1,\ldots,k_n \in \Z/d\Z$ and $\sigma$ is a word in $g_1,\ldots,g_{n-1}$. That is, $m$ splits into the `framing part'
$t_1^{k_1}\ldots t_n^{k_n}$ and the `braiding part' $\sigma$. 

Now let $w \in \mathfrak{S}_n$, and let $w=s_{i_1}s_{i_2}\ldots s_{i_r}$ be a reduced expression for $w$. Since the generators $g_i$ of the Yokonuma--Hecke algebra satisfy the same braid relations as the generators of $\mathfrak{S}_n$, Matsumoto's lemma implies that the element $g_w:=g_{i_1}g_{i_2}\ldots g_{i_r}$ is well-defined, that is, it does not depend on the choice of the reduced expression of $w$. 
Let $\ell$ denote the length function on $\mathfrak{S}_n$.
Then we have
\begin{equation}\label{rightmulti}
g_wg_{s_i}=\left\{\begin{array}{ll}
g_{ws_i}\,, & \text{if }\,\, \ell(ws_i) > \ell(w) \\
g_{ws_i}+(q-q^{-1})
g_we_i 
\,,  & \text{if }\,\, \ell(ws_i) < \ell(w)
\end{array}\right.\end{equation}
and
\begin{equation}\label{leftmulti}
g_{s_i}g_w=\left\{\begin{array}{ll}
g_{s_iw}\,, & \text{if }\,\, \ell(s_iw) > \ell(w) \\
g_{s_iw}+(q-q^{-1})e_ig_w\,,  & \text{if }\,\, \ell(s_iw) < \ell(w).
\end{array}\right.\end{equation}
Using the above multiplication formulas,
Juyumaya \cite{ju3} has proved that the following set is a  $\C[q,q^{-1}]$-basis for ${\rm Y}_{d,n}(q)$:
\begin{equation}\label{split}
\mathcal{B} =
\left\{\,t_1^{k_1}\ldots t_n^{k_n} g_w\,\left|\,\begin{array}{ll}w \in \mathfrak{S}_n, & k_1,\ldots,k_n \in \Z/d\Z
\end{array}\right\}\right.
\end{equation}
As a consequence, ${\rm Y}_{d,n}(q)$ is a free $\C[q,q^{-1}]$-module of rank $d^nn!$.

\subsection{Chain of algebras ${\rm Y}_{d,n}(q)$}
The algebras ${\rm Y}_{d,n}(q)$ form a chain, with respect to $n$, of algebras:
\begin{equation}\label{chain}
\C[q,q^{-1}]=:{\rm Y}_{d,0}(q)\subset {\rm Y}_{d,1}(q)\subset \ldots \subset {\rm Y}_{d,n-1}(q)\subset {\rm Y}_{d,n}(q)\subset\ldots\ \ ,
\end{equation}
where the injective morphisms are given by $${\rm Y}_{d,n-1}(q)\ni t_1,\dots,t_{n-1},g_1,\dots g_{n-2}\mapsto t_1,\dots,t_{n-1},g_1,\dots g_{n-2}\in {\rm Y}_{d,n}(q)\quad\ \text{for all $n\in \N$}.$$ 
The injectivity of the morphisms comes from the fact that there are $d^{n-1}(n-1)!$ elements in  $\mathcal{B}$ where $t_n$ and $g_{n-1}$ do not appear.

\subsection{Computation formulas in ${\rm Y}_{d,n}(q)$}\label{comp-for}
Let $i,k \in\{1,2,\ldots, n\}$ and set
\begin{equation}\label{edik}
e_{i,k} := \frac{1}{d} \sum_{s=0}^{d-1}t_i^st_{k}^{-s}.
\end{equation}
 Clearly
$e_{i,k} = e_{k,i}$ and it can be easily checked 
that $e_{i,k}^2 = e_{i,k}$. Note that $e_{i,i}=1$ and that $e_{i, i+1}=e_i$.
From $\mathrm{(f}_1)-\textrm{(f}_3)$, one obtains immediately that
\begin{equation}\label{edikrels}
\begin{array}{rcll}
t_i e_{j,k} & = & e_{j,k} t_i & \text{for $i,j,k=1,\dots,n$,}\\
e_{i,j} e_{k,l} & = & e_{k,l} e_{i,j} & \text{for $i,j,k,l=1,\dots,n$,}\\
e_{j,k}g_i  & = &  g_i e_{s_i(j),s_i(k)}& \text{for $i=1,\dots,n-1$ and $j,k=1,\dots,n$,}
\end{array}
\end{equation}
while one can also easily show that 
\begin{equation}\label{extraone}
\begin{array}{rcll}
t_ie_{i,k} & = & t_{k}e_{i,k}  &  \text{for $i,k=1,\dots,n$.}
\end{array}
\end{equation}
 Note that the last relation in (\ref{edikrels}) is also valid if $g_i$ is replaced by its inverse $g_i^{-1}$. 

\subsection{The Jucys--Murphy elements}
We define inductively the following elements in ${\rm Y}_{d,n}(q)$:
\begin{equation}\label{def-JM}
J_1:=1\ \ \ \text {and}\ \ \ J_{i+1}:=g_{i}\,J_{i} \,g_{i} \,\,\,\,\,\text{ for $i=1,\ldots,n-1$.}
\end{equation}
The element $J_{i+1}$ can be explicitly written in terms of the generators of ${\rm Y}_{d,n}(q)$ as:
\begin{equation}\label{form-JM}
J_{i+1}=1+(q-q^{-1})\Bigl(e_{i}\,g_{i}+e_{i-1,i+1}\,g_{i}g_{i-1}g_{i}+ \cdots \cdots+e_{1,i+1}\,g_{i}\ldots g_2g_1g_2\ldots g_{i} \Bigr)\ .
\end{equation}
Formula (\ref{form-JM}) is easily proved by induction on $i$, with the use of (\ref{def-JM}) and the last relation in (\ref{edikrels}).

We call the elements $J_1,\dots,J_n$, together with the elements $t_1,\dots,t_n$, the {\em Jucys--Murphy elements}
of the Yokonuma--Hecke algebra
 ${\rm Y}_{d,n}(q)$.

\begin{rem} {\rm For $q=\pm1$, the Yokonuma--Hecke algebra ${\rm Y}_{d,n}(q)$ specialises to the group algebra  over $\C$ of the complex reflection group $G(d,1,n)$. Formula (\ref{form-JM}) implies that the Jucys--Murphy elements for $G(d,1,n)$, defined in \cite{Ram-Sh,Wang} and used in \cite{OPdA2}, are the specialisations of the following elements of ${\rm Y}_{d,n}(q)$:
\[\frac{J_i-1}{q-q^{-1}}\ \ \ \ \ \text{for $i=1,\ldots,n$.}\]
Note that the above elements are well-defined elements of ${\rm Y}_{d,n}(q)$ due to (\ref{form-JM}). }
\end{rem}

\section{The affine Yokonuma--Hecke algebra}\label{sec-aff}

In this section we will see that the Yokonuma--Hecke algebra ${\rm Y}_{d,n}(q)$ is a quotient  of  the {\em affine Yokonuma--Hecke algebra} $\widehat{\rm Y}_{d,n}(q)$, which we define here. We will then study a special class of irreducible representations of $\widehat{\rm Y}_{d,2}(q)$, which will be used in the following sections to determine the spectrum of the Jucys--Murphy elements, and finally all irreducible representations for  ${\rm Y}_{d,n}(q)$.

\subsection{Definition of \,$\widehat{\rm Y}_{d,n}(q)$} 
Let  $d,\,n \in \N$ , $d \geq 1$. Let $q $ be an indeterminate.  The  affine Yokonuma--Hecke algebra, denoted by $\widehat{\rm Y}_{d,n}(q)$ , is a $\C[q,q^{-1}]$-associative algebra generated by the elements
$$\ot_1,\dots,\ot_n,\og_1,\dots,\og_{n-1},X_1^{\pm1}$$
subject to the following relations:
\begin{equation}\label{def-aff1}
\begin{array}{rclcl}
\og_i\og_j & = & \og_j\og_i && \mbox{for all $i,j=1,\dots,n-1$ such that $\vert i-j\vert > 1$,}\\[0.1em]
\og_i\og_{i+1}\og_i & = & \og_{i+1}\og_i\og_{i+1} && \mbox{for  all $i=1,\dots,n-2$,}\\[0.1em]
\ot_i\ot_j & =  & \ot_j\ot_i &&  \mbox{for all $i,j=1,\dots,n$,}\\[0.1em]
\ot_j\og_i & = & \og_i\ot_{s_i(j)} && \mbox{for all $i=1,\dots,n-1$ and $j=1,\dots,n$,}\\[0.1em]
\ot_j^d   & =  &  1 && \mbox{for all $j=1,\dots,n$,}\\[0.2em]
\og_i^2  & = & 1 + (q-q^{-1}) \, \oe_{i} \, \og_i && \mbox{for  all $i=1,\dots,n-1$,}
\end{array}
\end{equation}
where   $s_i$ is the transposition $(i,i+1)$ and
$$\oe_i :=\frac{1}{d}\sum\limits_{s=0}^{d-1}\ot_i^s \ot_{i+1}^{-s}\,,$$
together with the following relations concerning  the 
generators $X_1^{\pm1}$: 
\begin{equation}\label{def-aff2}
\begin{array}{rclcl}
X_1X_1^{-1}& =& X_1^{-1}X_1   \,\,\,\,=\,\,\,\,  1 &&\\[0.1em] 
X_1\,\og_1X_1\og_1 & =  & \og_1X_1\og_1\,X_1  &&\\[0.1em]
X_1\og_i & = & \og_iX_1 && \mbox{for all $i=2,\dots,n-1$,}\\[0.1em]
X_1\ot_j & = & \ot_jX_1 && \mbox{for all $j=1,\dots,n$.}
\end{array}
\end{equation}

Note that the elements $\oe_i$ are idempotents in $\widehat{\rm Y}_{d,n}(q)$ and that the elements $\og_i$ are invertible, with
\begin{equation}
\og_i^{-1} = \og_i - (q- q^{-1})\, \oe_i  \qquad \mbox{for all $i=1,\ldots,n-1$}.
\end{equation}

Similarly to (\ref{edik}), we define, for $i,k=1,2,\ldots, n$,
\begin{equation}\label{edik2}
\oe_{i,k} := \frac{1}{d} \sum_{s=0}^{d-1}\ot_i^s\ot_{k}^{-s}\ .
\end{equation}
Analogues, for the elements $\ot_i$, $\og_j$ and $\oe_{k,l}$, of the relations (\ref{edikrels}) and (\ref{extraone})
are satisfied in $\widehat{\rm Y}_{d,n}(q)$.

We define inductively elements $X_2,\dots,X_n$ in  $\widehat{\rm Y}_{d,n}(q)$ by
\begin{equation}\label{rec-X}
X_{i+1}:=\og_iX_i\og_i\ \ \ \ \text{for $i=1,\dots,n-1$.}
\end{equation}
Note that the 
second 
relation of (\ref{def-aff2}) can  be then rewritten as $X_1X_2=X_2X_1$. 

One motivation for defining the algebra $\widehat{\rm Y}_{d,n}(q)$ is the following: there is a surjective homomorphism $\pi$ from $\widehat{\rm Y}_{d,n}(q)$ onto the Yokonuma--Hecke algebra ${\rm Y}_{d,n}(q)$ given  by
\begin{equation}\label{surj-pi}
\pi(\ot_j)=t_j \ \text{ for } j=1,\dots,n,\quad\ \pi(\og_i)=g_i\ \text{ for } i=1,\dots,n-1,\quad\ \text{and}\quad\ 
\pi(X_1)=1\ .
\end{equation}
The fact that $\pi$ defines an algebra homomorphism from $\widehat{\rm Y}_{d,n}(q)$ to ${\rm Y}_{d,n}(q)$ derives immediately from the comparison of the relations (\ref{def-aff1}) with the defining  relations of ${\rm Y}_{d,n}(q)$, together with the fact that the relations (\ref{def-aff2}) are trivially satisfied if 
$X_1^{\pm 1}$ is replaced by $1$. 
Further, we  have
\begin{equation}\label{surj-X-J}
\pi(X_i)=J_i\ \ \ \ \text{for $i=1,\dots,n$.}
\end{equation}

\subsection{Commutative family of elements in $\widehat{\rm Y}_{d,n}(q)$}
Here we will prove some properties of the elements $X_1,\dots,X_n$ in $\widehat{\rm Y}_{d,n}(q)$, in particular that they form a commutative set. Via the homomorphism $\pi$, these properties are transferred to the Jucys--Murphy elements of the algebra ${\rm Y}_{d,n}(q)$.

\begin{lem}\label{X-g}
For any $i\in\{1,\dots,n\}$, we have 
$$\og_jX_i=X_i\og_j\ \ \ \text{for $j=1,\dots,n-1$ such that $j\neq i-1,i$.}$$
\end{lem}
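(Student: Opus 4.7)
The plan is to prove the statement by induction on $i$. For the base case $i=1$, the claim reduces to $\og_j X_1 = X_1 \og_j$ for $j=2,\dots,n-1$, which is precisely one of the defining relations of $\widehat{\rm Y}_{d,n}(q)$ listed in (\ref{def-aff2}).

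For the inductive step, suppose the statement holds for all indices up to $i$ and consider $X_{i+1}=\og_i X_i\og_i$. The values to handle are $j\neq i,i+1$, and I would split them into two groups. When $j\le i-2$ or $j\ge i+2$, the braid relation gives $\og_j\og_i=\og_i\og_j$, while the induction hypothesis applied to $X_i$ (which is legitimate since $j\neq i-1,i$) gives $\og_j X_i=X_i\og_j$; combining these lets us push $\og_j$ through the three factors $\og_i X_i\og_i$ one at a time.

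The main obstacle, and the only substantive case, is $j=i-1$ (which arises for $i\ge 2$), because the induction hypothesis on $X_i$ does not supply the commutation of $\og_{i-1}$ with $X_i$. To handle it, I would unfold one further level by writing $X_i=\og_{i-1}X_{i-1}\og_{i-1}$, so that
\[
X_{i+1}=\og_i\og_{i-1}X_{i-1}\og_{i-1}\og_i.
\]
Now the induction hypothesis applied to $X_{i-1}$ (for which the index $j=i$ satisfies $i\neq i-2,i-1$) gives $\og_i X_{i-1}=X_{i-1}\og_i$, and two applications of the braid relation $\og_{i-1}\og_i\og_{i-1}=\og_i\og_{i-1}\og_i$ symmetrically transport $\og_{i-1}$ through the expression:
\[
\og_{i-1}X_{i+1}=\og_{i-1}\og_i\og_{i-1}X_{i-1}\og_{i-1}\og_i=\og_i\og_{i-1}\og_i X_{i-1}\og_{i-1}\og_i=\og_i\og_{i-1}X_{i-1}\og_i\og_{i-1}\og_i=\og_i\og_{i-1}X_{i-1}\og_{i-1}\og_i\og_{i-1}=X_{i+1}\og_{i-1}.
\]

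The induction therefore closes. The argument is entirely formal: the only ingredients used beyond the induction hypothesis are the braid relations among the $\og_k$'s, and the crux is the symmetric manipulation in the case $j=i-1$, where one exchanges $\og_{i-1}$ past a central $X_{i-1}$ that already commutes with $\og_i$.
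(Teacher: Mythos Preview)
Your proof is correct and follows essentially the same approach as the paper's: induction on $i$, with the case $j\neq i-1$ handled directly from the commuting braid relation and the induction hypothesis for $X_i$, and the critical case $j=i-1$ treated by unfolding $X_{i+1}=\og_i\og_{i-1}X_{i-1}\og_{i-1}\og_i$ and using the braid relation $\og_{i-1}\og_i\og_{i-1}=\og_i\og_{i-1}\og_i$ on each side together with $\og_i X_{i-1}=X_{i-1}\og_i$ from the induction hypothesis. Your write-up is slightly more explicit in spelling out the final application of the braid relation on the right, but the argument is the same.
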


\begin{proof}
We prove the assertion by induction on $i$. The induction basis, for $X_1$, is part of the defining relations (\ref{def-aff2}). Let $i>1$. Writing  $X_{i+1}= \og_iX_i\og_i$, and using the first relation in (\ref{def-aff1}) together with the induction hypothesis, we obtain immediately that $X_{i+1}$ commutes with $\og_1,\dots,\og_{i-2}$ and with $\og_{i+2},\dots,\og_{n-1}$. So it remains to prove that $X_{i+1}$ commutes with $\og_{i-1}$ for $i>1$.  We write 
$$X_{i+1}\ = \ \og_iX_i\og_i \ = \ \og_i \og_{i-1}X_{i-1}\og_{i-1}\og_i\ . $$ 
Then we have 
$$\og_{i-1}\left(\og_i\og_{i-1}X_{i-1}\og_{i-1}\og_i\right)=\og_i\og_{i-1}\og_i X_{i-1}\og_{i-1}\og_i=\og_i\og_{i-1}X_{i-1}\og_i\og_{i-1}\og_i=\left(\og_i\og_{i-1}X_{i-1}\og_{i-1}\og_i\right)\og_{i-1}\ ,$$
where we have used,  for the first and third equalities,  that $\og_{i-1}\og_i\og_{i-1}=\og_i\og_{i-1}\og_i$, and the induction hypothesis for the second equality.
We deduce that $X_{i+1}$ commutes with $\og_{i-1}$.
\end{proof}

\begin{prop}\label{comm-aff}For all $i,j=1,\ldots,n$, we have
\begin{enumerate}[(a)]  
\item $X_i\ot_j=\ot_jX_i$\,;\smallbreak
\item $X_iX_j=X_jX_i$\,.
\end{enumerate}
\end{prop}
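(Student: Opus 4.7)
The plan is to prove (a) by a direct induction on $i$, then derive (b) by first reducing the non-adjacent case $|i-j|\geq 2$ to the adjacent case via Lemma~\ref{X-g}, and finally establishing the adjacent case $X_iX_{i+1}=X_{i+1}X_i$ by a second induction on $i$ whose base case is the key defining relation in~(\ref{def-aff2}).

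For (a), the case $i=1$ is one of the defining relations of~(\ref{def-aff2}). For the inductive step, I would write $X_{i+1}=\og_iX_i\og_i$ and apply the relation $\og_i\ot_k=\ot_{s_i(k)}\og_i$ (a rearrangement of the fourth relation of~(\ref{def-aff1})) twice to bracket a use of the inductive hypothesis:
\[X_{i+1}\ot_j=\og_iX_i\og_i\ot_j=\og_iX_i\ot_{s_i(j)}\og_i=\og_i\ot_{s_i(j)}X_i\og_i=\ot_j\og_iX_i\og_i=\ot_jX_{i+1}.\]

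For (b), I may assume $i<j$. If $j\geq i+2$, then $j-1\neq i-1,i$, so Lemma~\ref{X-g} gives $\og_{j-1}X_i=X_i\og_{j-1}$. Combined with the inner inductive hypothesis $X_iX_{j-1}=X_{j-1}X_i$ on $j$ and the formula $X_j=\og_{j-1}X_{j-1}\og_{j-1}$, this yields
\[X_iX_j=\og_{j-1}X_iX_{j-1}\og_{j-1}=\og_{j-1}X_{j-1}X_i\og_{j-1}=X_jX_i,\]
reducing everything to the adjacent case $j=i+1$.

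The remaining adjacent case $X_iX_{i+1}=X_{i+1}X_i$ is the heart of the argument, and I would prove it by a second induction on $i$. The base case $i=1$ is exactly the relation $X_1\og_1X_1\og_1=\og_1X_1\og_1X_1$ of~(\ref{def-aff2}), rewritten via $X_2=\og_1X_1\og_1$. For the inductive step, abbreviate $A:=\og_{i-1}$, $B:=\og_i$, $Y:=X_{i-1}$, so that $X_i=AYA$ and $X_{i+1}=BAYAB$. The identity to prove becomes the equality of the two words $AYABAYAB$ and $BAYABAYA$, using only three inputs: (i) the braid relation $ABA=BAB$, (ii) the commutation $YB=BY$ (Lemma~\ref{X-g} applied to $X_{i-1}$ and $\og_i$, valid since $i\neq i-2,i-1$), and (iii) the inductive hypothesis $YAYA=AYAY$, which is $X_{i-1}X_i=X_iX_{i-1}$ in these symbols. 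The verification is a pure word-rewriting exercise that alternates braid moves with $Y$-past-$B$ commutations and invokes the inductive identity exactly once in the middle of the chain. The main obstacle is the combinatorial bookkeeping—choreographing the moves so that after the single application of (iii) the remaining rewrites straighten the word into $BAYABAYA$; no new algebraic identity beyond (i), (ii), (iii) is needed.
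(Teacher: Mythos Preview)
Your proof is correct; part (a) is verbatim the paper's argument, and part (b) uses the same three ingredients (Lemma~\ref{X-g}, the braid relation, and the base relation $X_1X_2=X_2X_1$). The difference is purely organizational. The paper runs a \emph{single} induction on $i$ showing that $\{X_1,\dots,X_i\}$ is commutative, and within the inductive step it first proves that $X_{i+1}$ commutes with $X_1,\dots,X_{i-1}$ (writing $X_{i+1}=\og_iX_i\og_i$ and using Lemma~\ref{X-g} plus the hypothesis on $X_i$), and only \emph{then} handles the adjacent case by writing $X_i=\og_{i-1}X_{i-1}\og_{i-1}$ and noting that $X_{i+1}$ commutes with both factors: with $\og_{i-1}$ by Lemma~\ref{X-g}, and with $X_{i-1}$ by what was just proved in the same step. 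This ordering makes the commutation $X_{i+1}X_{i-1}=X_{i-1}X_{i+1}$ available for free, so the adjacent case becomes a one-line observation and your word-rewriting exercise disappears into the structure of the induction. Your route works as well---the chain you sketch does close up with exactly one use of~(iii), since in your notation one checks directly that $BAYAB$ commutes with $A$ (from (i)+(ii)) and with $Y$ (from (ii)+(iii)), hence with $AYA$---but the paper's packaging avoids having to spell this out.
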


\begin{proof} 
For $(a)$, we will proceed by induction on $i$. For $i=1$, the relations $X_1\ot_j=\ot_jX_1$, $j=1,\dots,n$, are in the defining relations (\ref{def-aff2}) for $\widehat{\rm Y}_{d,n}(q)$. Now let $i\in\{1,\dots,n-1\}$ and assume that $X_i$ commutes with $t_1,\ldots,t_n$. Write $X_{i+1}=  \og_iX_i\og_i$. Using the induction hypothesis and the defining relations of $\widehat{\rm Y}_{d,n}(q)$, the following calculation is straightforward for any $j=1,\dots,n$:
$$\left( \og_iX_i\og_i\right)\ot_j=\og_iX_i\ot_{s_i(j)}\og_i= \og_i\ot_{s_i(j)}X_i\og_i=\ot_j \left(\og_iX_i\og_i\right)\ .$$

For $(b)$, we will show, by induction on $i$, that $X_1,\dots,X_i$ is a commutative set. For $i=1$ there is nothing to prove. Let $i\in\{1,\dots,n-1\}$, and write  $X_{i+1}= \og_iX_i\og_i$. By  Lemma \ref{X-g} and the induction hypothesis, we have that $X_{i+1}$ commutes with $X_1,\dots,X_{i-1}$. So it remains to prove that $X_{i+1}$ commutes with $X_i$. If $i=1$ this statement is the first defining relation in (\ref{def-aff2}). Now assume that $i>1$, and write $X_i= \og_{i-1}X_{i-1}\og_{i-1}$. By Lemma \ref{X-g} and the fact that $X_{i+1}$ commutes with $X_{i-1}$, we conclude that $X_i$ commutes with $X_{i+1}$. 
\end{proof}

\subsection{Commutativity of the Jucys--Murphy elements of ${\rm Y}_{d,n}(q)$}
Using the homomorphism $\pi$, we obtain the following corollaries of Lemma \ref{X-g} and Proposition \ref{comm-aff} respectively:
\begin{cor}\label{J-g}
For any $i\in\{1,\dots,n\}$, we have 
$$g_jJ_i=J_ig_j\ \ \ \text{for $j=1,\dots,n-1$ such that $j\neq i-1,i$.}$$
\end{cor}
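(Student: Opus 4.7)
The plan is to deduce this corollary directly from Lemma \ref{X-g} by transporting the commutation relation through the surjective algebra homomorphism $\pi : \widehat{\rm Y}_{d,n}(q) \twoheadrightarrow {\rm Y}_{d,n}(q)$ constructed in \eqref{surj-pi}. Since $\pi$ is an algebra homomorphism, it preserves products, and hence any relation of the form $\og_j X_i = X_i \og_j$ holding in $\widehat{\rm Y}_{d,n}(q)$ forces the relation $\pi(\og_j)\,\pi(X_i) = \pi(X_i)\,\pi(\og_j)$ in ${\rm Y}_{d,n}(q)$.

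More precisely, I would proceed in one short step. Fix $i \in \{1,\dots,n\}$ and $j \in \{1,\dots,n-1\}$ with $j \neq i-1, i$. By Lemma \ref{X-g} we have $\og_j X_i = X_i \og_j$ in $\widehat{\rm Y}_{d,n}(q)$. Applying $\pi$ to both sides and using $\pi(\og_j) = g_j$ from \eqref{surj-pi} together with $\pi(X_i) = J_i$ from \eqref{surj-X-J}, we immediately obtain $g_j J_i = J_i g_j$ in ${\rm Y}_{d,n}(q)$, which is the desired statement.

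There is essentially no obstacle: the content of the corollary is entirely contained in Lemma \ref{X-g}, and the homomorphism $\pi$ is explicitly designed so that $X_i$ maps to $J_i$, as recorded in \eqref{surj-X-J}. The only thing to verify is that \eqref{surj-X-J} itself is compatible with the inductive definitions $X_{i+1} = \og_i X_i \og_i$ and $J_{i+1} = g_i J_i g_i$, but this was already observed when $\pi$ was introduced, so no additional work is needed here.
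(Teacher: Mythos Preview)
Your proof is correct and matches the paper's approach exactly: the paper simply states that Corollary \ref{J-g} follows from Lemma \ref{X-g} via the homomorphism $\pi$, which is precisely the argument you give.
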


\begin{cor}\label{commutativeset}
\label{comm-JM}The elements $t_1,\dots,t_n,J_1,\dots,J_n$, that is, the Jucys--Murphy elements of ${\rm Y}_{d,n}(q)$,  form a commutative set of elements.
\end{cor}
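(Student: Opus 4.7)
The proof is essentially a one-line application of the surjective homomorphism $\pi\colon \widehat{\rm Y}_{d,n}(q)\to {\rm Y}_{d,n}(q)$ constructed in (\ref{surj-pi}). My plan is as follows.

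First, I would recall that $\pi(\ot_j)=t_j$ for $j=1,\dots,n$ and, by formula (\ref{surj-X-J}), $\pi(X_i)=J_i$ for $i=1,\dots,n$. Since $\pi$ is an algebra homomorphism, any polynomial identity satisfied by the elements $\ot_1,\dots,\ot_n,X_1,\dots,X_n$ in $\widehat{\rm Y}_{d,n}(q)$ is sent to the corresponding identity among $t_1,\dots,t_n,J_1,\dots,J_n$ in ${\rm Y}_{d,n}(q)$.

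Next, I would apply this observation to the two families of identities proved in Proposition \ref{comm-aff}. Part (a) of that proposition gives $X_i\ot_j=\ot_jX_i$ in $\widehat{\rm Y}_{d,n}(q)$, which upon applying $\pi$ becomes $J_it_j=t_jJ_i$ in ${\rm Y}_{d,n}(q)$ for all $i,j=1,\dots,n$. Part (b) gives $X_iX_j=X_jX_i$, which likewise becomes $J_iJ_j=J_jJ_i$. Finally, the defining relation $(\mathrm{f}_1)$ of ${\rm Y}_{d,n}(q)$ already provides $t_it_j=t_jt_i$. Putting these three families of commutation relations together yields that $\{t_1,\dots,t_n,J_1,\dots,J_n\}$ is a commutative set.

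There is no real obstacle here, as the work has already been done at the level of $\widehat{\rm Y}_{d,n}(q)$ in Lemma \ref{X-g} and Proposition \ref{comm-aff}. The only point worth double-checking is that formula (\ref{surj-X-J}) truly holds for every $i$; this is clear by induction on $i$ using the recursive definition $X_{i+1}=\og_iX_i\og_i$ together with the recursion $J_{i+1}=g_iJ_ig_i$ of (\ref{def-JM}), and the fact that $\pi(\og_i)=g_i$.
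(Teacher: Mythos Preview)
Your proposal is correct and follows exactly the paper's approach: the corollary is stated immediately after the sentence ``Using the homomorphism $\pi$, we obtain the following corollaries of Lemma \ref{X-g} and Proposition \ref{comm-aff} respectively,'' with no further proof given. Your write-up simply makes explicit the one-line transfer via $\pi$ that the paper leaves implicit.
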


\subsection{Representations of the affine Yokonuma--Hecke algebra $\widehat{\rm Y}_{d,2}(q)$}\label{affine_{d,2}}
\noindent Consider the simplest non-trivial affine Yokonuma--Hecke algebra, that is, $\widehat{\rm Y}_{d,2}(q)$.
This algebra is generated by elements 
$\ot_1,\, \ot_2,\, X_1^{\pm1},\, \og$ 
subject to the following defining relations:
$$
\ot_2 = \og\ot_1\og^{-1}\!\!=\og^{-1}\ot_1\og,\quad \ot_1^d=\ot_2^d=1,\quad 
X_1X_1^{-1}=X_1^{-1}X_1=1, 
\quad \og^2 = 1 + (q-q^{-1}) \, \oe \, \og\ ,$$
where $\oe:=\frac{1}{d}\sum\limits_{s=0}^{d-1}\ot_1^s \ot_2^{-s}$, together with the following commutation relations
 $$ 
\og X_1\og X_1=X_1\og X_1\og\ \quad\ \text{and}\ \quad\  
 \mathsf{A}\mathsf{B}= \mathsf{B}\mathsf{A}\quad\text{for any $ \mathsf{A},\mathsf{B} \in\{\ot_1,\ot_2,X_1
 \}$.}$$
Recall that $X_2$ is defined by $X_2=\og X_1\og$ and that $X_2$ commutes with $\ot_1,\ot_2$ and $X_1$;
also that $\oe$ is an idempotent and that $\og$ is invertible with  
$\og^{-1} = \og + (q^{-1}-q)\,\oe$
(note that $X_2$ is invertible as well). 
Finally, in this particular case, $\oe$ is a central element of $\widehat{\rm Y}_{d,2}(q)$.

We are interested in the irreducible representations of $\C(q)\widehat{\rm Y}_{d,2}(q)$ such that $\ot_1,\,\ot_2,\,X_1,\,X_2$ are diagonalisable operators.
So let ${\bf v}$ be a common eigenvector of $ \ot_1,\, \ot_2,\, X_1,\, X_2$ such that
$$\ot_1({\bf v})=\a {\bf v},\,\ot_2({\bf v})=\b {\bf v},\,X_1({\bf v})=\l {\bf v},\,X_2({\bf v})=\m {\bf v}\,,$$
for some $\a, \b, \l, \m \in \C(q)$, with $\a^d=\b^d=1$,
$\l\neq0$ and $\m\neq 0$. 
Set 
$$\g:=\frac{1}{d}\sum_{s=0}^{d-1}\a^s \b^{-s}.$$
We have $\oe({\bf v})=\g {\bf v}$. Since $\oe$ is an idempotent, we must have $\g=0$ or $\g=1$.  In fact,  $\g=1$ if and only if $\a=\b$.

First assume that $\og( {\bf v})$ is proportional to ${\bf v}$, that is, $\og( {\bf v}) = \r {\bf v}$ for some {$\r \in \C(q)\setminus \{0\}$.
Since $\ot_2=\og\ot_1\og^{-1}$, we must have $\a=\b$, so we have $\g=1$. Moreover, $X_2({\bf v})= \og X_1\og ({\bf v})= \l \r^2{\bf v}$.
Finally, the relation
$$ \og \left( \og({\bf v}) \right) = {\bf v} + (q-q^{-1}) \,  \oe\, \og({\bf v})$$
implies that $\r$ satisfies the quadratic equation
$$ \r^2-(q-q^{-1})\r-1=0,$$
whence we deduce that $\r=\epsilon q^{\epsilon}$, where $\epsilon\in\{-1,1\}$.
Thus, the vector ${\bf v}$ spans a one-dimensional representation of $\C(q)\widehat{\rm Y}_{d,2}(q)$, and we have
\[\ot_1({\bf v})=\a {\bf v},\ \ \ot_2({\bf v})=\a {\bf v},\ \ X_1({\bf v})=\l {\bf v},\ \ X_2({\bf v})=\l q^{2\epsilon}\,{\bf v},\ \ \og({\bf v})=\epsilon q^{\epsilon}{\bf v}\,,\]
with $\epsilon\in\{-1,1\}$.

Now assume that $\og( {\bf v})$ is not proportional to ${\bf v}$. 
We will calculate the actions of $ \ot_1$, $\ot_2$, $X_1$, $X_2$, $\og$ on $\og( {\bf v})$, using the quadratic relation $\og^2=1 + (q-q^{-1}) \, \oe \, \og$, together with
$$\ot_1 \og=\og \ot_2, \qquad \ot_2 \og=\og \ot_1,\qquad X_1 \og= \og^{-1}X_2\ \ \ \ \ \text{and}\ \ \ \ \ X_2 \og=\og X_1 \og^2\,.$$     
We obtain that the action of $\C(q)\widehat{\rm Y}_{d,2}(q)$ closes on the  vector space spanned by ${\bf v}$ and $\og( {\bf v})$. The explicit formulas are given by
$$
\ot_1 \og({\bf v})= \og \ot_2 ({\bf v}) = \b \og( {\bf v}) \,\,\,\text{ and }\,\,\, \ot_2 \og({\bf v})= \og \ot_1 ({\bf v}) = \a \og( {\bf v}).
$$
Thus, $\oe\og({\bf v})=\g \og({\bf v})$.
Now,
$$X_1 \og({\bf v}) =  \og^{-1} X_2({\bf v})=
 \m\, \og^{-1}({\bf v})
=\m\, (q^{-1}\!-q)\,\g\,{\bf v}+\m\,\og({\bf v}),$$
and
$$X_2 \og({\bf v})=\og X_1\og^2({\bf v})  =\og X_1\bigl({\bf v}+(q-q^{-1})\g \og({\bf v})\bigr)= \og X_1({\bf v})+ (q-q^{-1})\g X_2({\bf v})= \m (q-q^{-1})\,\g\,{\bf v}+\l \, \og({\bf v}).$$
Finally,
$$ \og \left( \og({\bf v}) \right) = \og^2({\bf v}) =  {\bf v} +(q-q^{-1}) \, \g \, \og({\bf v}).$$

\vskip .4cm
As a conclusion, we obtain the following classification of irreducible representations of $\C(q)\widehat{\rm Y}_{d,2}(q)$ with $\ot_1,\,\ot_2,\,X_1,\,X_2$ diagonalisable:

\begin{enumerate}[(1)]
\item One-dimensional representations, given by:
$$\ot_1\mapsto\a\,,\ \ \ \ \ot_2\mapsto\a\,,\ \ \ \ X_1\mapsto\l\,,\ \ \ \ X_2\mapsto\l q^{2\epsilon}\,,\ \ \ \ \og\mapsto\epsilon q^{\epsilon},$$
where $\epsilon\in\{-1,1\}$, $\a^d=1$ and $\l\neq0$.

\vskip .2cm
\item Two-dimensional representations with the central element $\oe$ acting as the identity matrix, given by:
$$
\ot_1\mapsto\left(\begin{array}{cc}\a & 0\\0 & \a \end{array}\right)\!,\ \ \ot_2\mapsto\left(\begin{array}{cc}\a & 0\\0 & \a \end{array}\right)\!,\ \ X_1\mapsto\left(\begin{array}{cc}\l & -\m (q-q^{-1})\\0 & \m  \end{array}\right)\!,\ \ X_2\mapsto\left(\begin{array}{cc}\m & \m (q-q^{-1})\\0 & \l  \end{array}\right)\!,$$
$$\og\mapsto\left(\begin{array}{cc}0 & 1\\1 & q-q^{-1} \end{array}\right),\ \ $$ 
 where $\a^d=1$,
 $\l,\m\neq0$ 
  and  $\m\neq\l$ (in order for $X_1,X_2$ to be diagonalisable). In order  for these representations to be irreducible, we must have $\m\neq\l q^{\pm2}$.
Then, for the basis $$\left\{{\bf v}\,,\,\frac{\m-\l}{q\m -q^{-1}\l}\left({\og({\bf v})}-\frac{\m\,(q-q^{-1})}{\m-\l}{\bf v}\right)\right\},$$ the action of the generators becomes
$$\ot_1\mapsto\left(\begin{array}{cc}\a & 0\\0 & \a \end{array}\right),\ \ \ot_2\mapsto\left(\begin{array}{cc}\a & 0\\0 & \a \end{array}\right),\ \ X_1\mapsto\left(\begin{array}{cc}\l & 0\\0 & \m \end{array}\right),\ \ X_2\mapsto\left(\begin{array}{cc}\m & 0\\0 & \l  \end{array}\right),$$
$$\og\mapsto \frac{1}{\m -\l}
\left(\begin{array}{cc}\displaystyle{\m(q-q^{-1})} & \displaystyle {-\bigl(q\l -q^{-1}\m\bigr)}\\[1.2em] \bigl(q\m-q^{-1}\l\bigr) & \displaystyle{-\l (q-q^{-1})}\end{array}\right).\ \ $$  

\vskip .2cm
\item Two-dimensional representations with the central element $\oe$ acting as $0$, given by:
$$\ot_1\mapsto\left(\begin{array}{cc}\a & 0\\0 & \b \end{array}\right),\ \ \ot_2\mapsto\left(\begin{array}{cc}\b & 0\\0 & \a \end{array}\right),\ \ X_1\mapsto\left(\begin{array}{cc}\l & 0\\0 & \m  \end{array}\right),\ \ X_2\mapsto\left(\begin{array}{cc}\m & 0\\0 & \l  \end{array}\right),\ \ \og\mapsto\left(\begin{array}{cc}0 & 1\\1 & 0 \end{array}\right),\ \  $$
where $\a^d=\b^d=1$,
$\a\neq\b$ and $\l,\m\neq0$
. All these representations are  irreducible.
\end{enumerate}

\section{Spectrum of the Jucys--Murphy elements and standard $d$-tableaux}\label{sec-spe}

In this section we will use the representations of the affine Yokonuma--Hecke algebra $\widehat{\rm Y}_{d,2}(q)$  constructed in \S \ref{affine_{d,2}} to describe the spectrum of the Jucys--Murphy elements of ${\rm Y}_{d,n}(q)$. We will then provide a connection between the spectrum and the combinatorics of standard $d$-tableaux.

\subsection{Spectrum of the Jucys--Murphy elements}
We shall study the spectrum of the Jucys--Murphy elements for the representations of $\C(q){\rm Y}_{d,n}(q)$ that verify the following two conditions: 
\begin{enumerate}[(i)]
\item the Jucys--Murphy elements  $t_1,\dots,t_n,J_1,\dots,J_n$ are represented by diagonalisable operators; 
\item for any $i=1,\dots,n-1$, 
the action of the subalgebra of $\C(q){\rm Y}_{d,n}(q)$ generated by $t_i$, $t_{i+1}$, $J_i$, $J_{i+1}$ and $g_i$ is completely reducible. 
\end{enumerate}
In Section \ref{sec-rep}, we shall see that all irreducible representations of $\C(q){\rm Y}_{d,n}(q)$ satisfy these two properties.

\begin{defn}{\rm We say that the  $2 \times n$  array
\begin{equation}\label{spec-ele}\Lambda=\left(\begin{array}{ccc}\a^{(\Lambda)}_1&,\, \dots\, ,&\a^{(\Lambda)}_n\\[.5em]
\l^{(\Lambda)}_1&,\, \dots\, ,&\l^{(\Lambda)}_n\end{array}\right)\end{equation}
belongs to the {\em spectrum} of  $t_1,\dots,t_n,J_1,\dots,J_n$ 
if there exists a representation $V_\Lambda$ of  $\C(q){\rm Y}_{d,n}(q)$ satisfying conditions (i) and (ii), and  a vector ${\bf v}_{\Lambda} \in V_\Lambda$ 
such that
\begin{center}
 $t_i({\bf v}_{\Lambda})=\a^{(\Lambda)}_i{\bf v}_{\Lambda}$ and $J_i({\bf v}_{\Lambda})=\l^{(\Lambda)}_i{\bf v}_{\Lambda}$ for $i=1,\dots,n$. 
 \end{center}
In this case, we say that the vector ${\bf v}_{\Lambda}$ is an {\em admissible vector} for $\Lambda$. 
 We denote by $\spec$ the spectrum of  the Jucys--Murphy elements of ${\rm Y}_{d,n}(q)$.}
\end{defn}

\begin{rem}{\rm 
By  Corollary \ref{J-g}, we have that the action of $g_j$, $j=1,\dots,n-1$, on 
an admissible vector 
${\bf v}_\Lambda$ is ``local" in the sense that $g_j({\bf v}_{\Lambda})$ is a linear combination of 
admissible vectors 
${\bf v}_{\Lambda'}$ 
for arrays $\Lambda'$ 
such that 
\begin{center}
${{\a^{(\Lambda')}_i=\a^{(\Lambda)}_i}^{\phantom{A}}}^{\phantom{A}}$\hspace{-0.55cm}\,\,\, and\,\,\, $\l^{(\Lambda')}_i=\l^{(\Lambda)}_i$ \,\,\,for $i\neq j,j+1$. 
\end{center}}
\end{rem} 

\vskip .2cm 
For any $i\in\{1,\dots,n-1\}$, the elements $t_i$, $t_{i+1}$, $J_i$, $J_{i+1}$ and $g_i$ realise the simplest non-trivial affine Yokonuma--Hecke algebra $\widehat{\rm Y}_{d,2}(q)$.
From the study of the representations of  $\C(q)\widehat{\rm Y}_{d,2}(q)$ made in  \S \ref{affine_{d,2}}, we obtain the following information on the set $\spec$.
\begin{prop} \label{prop-spec}
Let $\Lambda=\left(\begin{array}{ccccc}\a_1&,\, \dots\, ,&\a_i\ ,\ \a_{i+1}&,\, \dots\, ,&\a_n\\[.3em]
\l_1&,\, \dots\, ,&\l_i\ ,\ \l_{i+1}&,\, \dots\, ,&\l_n\end{array}\right)\in 
\spec$, and let ${\bf v}_{\Lambda}$ be 
an admissible vector for $\Lambda$. 
Then

\begin{itemize}
\item[(a)] We have $\a_i^d=1$ for all $i=1,\dots,n$; if $\a_{i+1}=\a_{i}$, then $\l_{i+1}\neq \l_i$. 
\vskip .2cm
\item[(b)] If $\a_{i+1}=\a_i$ and $\l_{i+1}=\l_iq^{2\epsilon}$, where $\epsilon=\pm 1$, then $g_i({\bf v}_{\Lambda})=\epsilon q^{\epsilon}{\bf v}_{\Lambda}$.
\vskip .2cm
\item[(c)] If $\a_{i+1}=\a_i$ and $\l_{i+1}\neq \l_iq^{\pm2}$, then 
\[\Lambda'=\left(\begin{array}{cccccc}\a_1&,\, \dots\, ,&\a_{i+1}\ ,& \a_i &,\, \dots\, ,&\a_n\\[.3em]
\l_1&,\, \dots\, ,&\l_{i+1}\ ,&\l_i&,\, \dots\, ,&\l_n\end{array}\right)\in \spec\ .\] 
Moreover, the vector $g_i({\bf v}_{\Lambda})-\frac{\l_{i+1}(q-q^{-1})}{\l_{i+1}-\l_i}{\bf v}_{\Lambda}$ 
is admissible for 
the array $\Lambda'$.
\vskip .2cm
\item[(d)] If $\a_{i+1}\neq \a_i$, then  
\[\Lambda'=\left(\begin{array}{cccccc}\a_1&,\, \dots\, ,&\a_{i+1}\ ,& \a_i&,\, \dots\, ,&\a_n\\[.3em]
\l_1&,\, \dots\, ,&\l_{i+1}\ ,&  \l_i&,\, \dots\, ,&\l_n\end{array}\right)\in \spec\ .\] 
Moreover, the vector $g_i({\bf v}_{\Lambda})$ 
is admissible for 
the array $\Lambda'$.
\end{itemize}
\end{prop}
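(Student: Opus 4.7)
The plan is to reduce the proposition to the classification of irreducible representations of $\widehat{\rm Y}_{d,2}(q)$ carried out in \S\ref{affine_{d,2}}. The first step is to verify that, for each $i\in\{1,\dots,n-1\}$, the elements $t_i, t_{i+1}, J_i, J_{i+1}, g_i$ satisfy, inside $\C(q){\rm Y}_{d,n}(q)$, the defining relations of $\widehat{\rm Y}_{d,2}(q)$ under the assignment $\ot_1\mapsto t_i$, $\ot_2\mapsto t_{i+1}$, $\og\mapsto g_i$, $X_1\mapsto J_i$ (so automatically $X_2\mapsto J_{i+1}=g_iJ_ig_i$). The relations among $t_i,t_{i+1},g_i$ are part of the presentation (\ref{modular})--(\ref{quadr}); the commutations $J_it_i=t_iJ_i$, $J_it_{i+1}=t_{i+1}J_i$ and $J_iJ_{i+1}=J_{i+1}J_i$ (the latter being the ``affine'' relation $X_1\og X_1\og=\og X_1\og X_1$) all come from Corollary \ref{commutativeset}. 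Hence the subalgebra $A_i\subseteq\C(q){\rm Y}_{d,n}(q)$ generated by these five elements is a quotient of $\C(q)\widehat{\rm Y}_{d,2}(q)$, and hypothesis (ii) ensures that $V_\Lambda|_{A_i}$ is completely reducible with $t_i,t_{i+1},J_i,J_{i+1}$ acting diagonalisably.

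Decompose $V_\Lambda|_{A_i}=\bigoplus_j W_j$ into $A_i$-irreducibles and write ${\bf v}_\Lambda=\sum_j{\bf v}_j$ with ${\bf v}_j\in W_j$. Since each $W_j$ is stable under the four diagonalisable operators, each nonzero ${\bf v}_j$ is a common eigenvector with the same eigenvalues $\a_i,\a_{i+1},\l_i,\l_{i+1}$ as ${\bf v}_\Lambda$. Every such $W_j$ is therefore one of the three types (1), (2), (3) listed in \S\ref{affine_{d,2}}, with parameters $(\a,\b,\l,\m)=(\a_i,\a_{i+1},\l_i,\l_{i+1})$.

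The four parts now follow by direct inspection. For (a), $\a_i^d=1$ is immediate from $\ot_i^d=1$; if moreover $\a_i=\a_{i+1}$, then type (3) is excluded (it requires $\a\neq\b$), leaving types (1) and (2), both of which require $\l\neq\m$, hence $\l_i\neq\l_{i+1}$. For (b), the hypothesis $\l_{i+1}=\l_iq^{2\epsilon}$ excludes type (2) (which demands $\m\neq\l q^{\pm2}$) and, combined with $\a_i=\a_{i+1}$, also type (3), leaving only type (1); matching $\m=\l q^{2\eta}$ with $\l_{i+1}=\l_iq^{2\epsilon}$ forces $\eta=\epsilon$, so $\og$ acts on every $W_j$ by $\epsilon q^\epsilon$, and summing over $j$ gives $g_i({\bf v}_\Lambda)=\epsilon q^\epsilon{\bf v}_\Lambda$. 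For (d), $\a_i\neq\a_{i+1}$ forces every $W_j$ to be of type (3), where by the explicit formulas $\og({\bf v}_j)$ is an eigenvector realising $\Lambda'$; summing gives $g_i({\bf v}_\Lambda)=\sum_j\og({\bf v}_j)$, nonzero by invertibility of $g_i$.

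For part (c), types (1) and (3) are excluded (by $\l_{i+1}\neq\l_iq^{\pm2}$ and $\a_i=\a_{i+1}$ respectively), so each $W_j$ has type (2). In the ``nice'' basis of \S\ref{affine_{d,2}}, the second basis vector ${\bf v}'_j$ of $W_j$ has $t$-eigenvalues $\a_i,\a_i$ and $J$-eigenvalues $\l_{i+1},\l_i$. Reading off the matrix of $\og$ in that basis yields
\[
g_i({\bf v}_j)-\frac{\l_{i+1}(q-q^{-1})}{\l_{i+1}-\l_i}{\bf v}_j=\frac{q\l_{i+1}-q^{-1}\l_i}{\l_{i+1}-\l_i}\,{\bf v}'_j,
\]
and the scalar on the right is nonzero precisely under the hypothesis $\l_{i+1}\neq\l_iq^{\pm2}$. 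Summing over $j$ exhibits the vector in the statement as a nonzero multiple of $\sum_j{\bf v}'_j$, which is an eigenvector realising $\Lambda'$; hence $\Lambda'\in\spec$, witnessed by the same representation $V_\Lambda$. The main technical obstacle I anticipate is precisely this step: one must check that the scalar conversion between the canonical eigenvector ${\bf v}'_j$ and the linear combination $g_i({\bf v}_j)-\frac{\l_{i+1}(q-q^{-1})}{\l_{i+1}-\l_i}{\bf v}_j$ is uniform in $j$ (it is, because it depends only on $\l_i,\l_{i+1},q$) and nonzero (which is exactly what the hypothesis ensures); the remaining cases are pure reading-off from the classification.
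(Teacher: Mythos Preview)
Your proof is correct and follows exactly the approach the paper takes: the paper simply asserts that, since $t_i,t_{i+1},J_i,J_{i+1},g_i$ realise $\widehat{\rm Y}_{d,2}(q)$, the proposition follows from the classification in \S\ref{affine_{d,2}}, without spelling out any details. You have supplied those details carefully (the homomorphism from $\widehat{\rm Y}_{d,2}(q)$, the use of hypotheses (i)--(ii) to decompose into irreducibles of the three listed types, and the case-by-case reading of the matrices), so your argument is in fact more complete than what appears in the paper. One small point: in part (c) the coefficient $\frac{q\l_{i+1}-q^{-1}\l_i}{\l_{i+1}-\l_i}$ vanishes only when $\l_{i+1}=q^{-2}\l_i$, not under both signs, so ``precisely'' is slightly too strong---but the hypothesis of (c) certainly suffices, which is all you need.
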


\subsection{Content arrays}
We define the following set of $2\times n$ arrays with entries in $\C(q)$ which will turn out, on the one hand, to contain the set $\spec$ and, on the other hand, to be in bijection with the set of standard $d$-tableaux of size $n$.
\begin{defn} \label{def-cont}{\rm
A {\em content array} is a $2\times n$ array of numbers with entries in $\C(q)$, $$\left(\begin{array}{ccc}\a_1&,\, \dots\, ,&\a_n\\[.3em]
\l_1&,\, \dots\, ,&\l_n\end{array}\right),$$ satisfying the following conditions: 
\begin{itemize}
\item[(1)] We have $\l_1=1$, and $\a_i^d=1$ for all $i=1,\dots,n$. 
\vskip .2cm
\item[(2)] 
If  $\l_j\neq 1$ for some $j>1$, then there exists $i<j$ such that $\a_i=\a_j$ and
$\l_i\in\{\l_jq^{-2},\l_jq^2\}$.
\vskip .2cm
\item[(3)] If $\a_j=\a_k$ and $\l_j=\l_k$ for
 $j<k$, then $k-j \geq 3$ and there exist $j+1 \leq i_1, \, i_2 \leq k-1$ such that $ \a_{i_1}=\a_{i_2}=\a_j$, 
$\l_{i_1}=\l_jq^{-2}$ and $\l_{i_2}=\l_jq^2$.
\end{itemize}
We denote by $\cont$ the set of content arrays.
}
\end{defn}

With the help of the Proposition \ref{prop-spec}, we can now prove the following result.

\begin{prop} \label{spec-cont}
We have $\spec\subseteq \cont$.
\end{prop}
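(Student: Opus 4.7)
The plan is to verify each of the three defining conditions of $\cont$ for an arbitrary $\Lambda\in\spec$, using Proposition~\ref{prop-spec} throughout: parts (c) and (d) produce new elements of $\spec$ by swapping adjacent columns, while parts (a) and (b) describe the obstructions. Condition~(1) will be immediate: $\lambda_1=1$ from $J_1=1$, and $\alpha_i^d=1$ from $t_i^d=1$ (also Proposition~\ref{prop-spec}(a)).

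For condition~(2) I argue by contradiction. Suppose $\lambda_j\neq 1$ for some $j>1$ but no earlier column matches $\alpha_i=\alpha_j$ with $\lambda_i\in\{\lambda_jq^{\pm 2}\}$. Then the swap of positions $j-1$ and $j$ is not of the obstructed type of Proposition~\ref{prop-spec}(b), so (c) or (d) produces $\Lambda^{(1)}\in\spec$ with $(\alpha_j,\lambda_j)$ at position $j-1$. Earlier columns are unchanged and the hypothesis persists, so iterating slides $(\alpha_j,\lambda_j)$ to position $1$, forcing $\lambda_j=\lambda_1=1$, a contradiction.

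Condition~(3) is the main technical step, proved by induction on the gap $k-j$. The case $k-j=1$ is excluded by Proposition~\ref{prop-spec}(a). For $k-j=2$, each non-obstructed sub-case of (c) or (d) yields an array in $\spec$ with two adjacent equal columns, contradicting~(a); the only remaining case has $\alpha_{j+1}=\alpha_j$ and $\lambda_{j+1}=\lambda_jq^{2\epsilon}$, and Proposition~\ref{prop-spec}(b) then gives $g_j(v_\Lambda)=\epsilon q^\epsilon v_\Lambda$ and $g_{j+1}(v_\Lambda)=-\epsilon q^{-\epsilon}v_\Lambda$; applying the braid relation $g_jg_{j+1}g_j=g_{j+1}g_jg_{j+1}$ to $v_\Lambda$ forces $q^{2\epsilon}=-1$, impossible over $\C(q)$. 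For the base case $k-j=3$, the same swap-or-obstruction analysis forces $\alpha_{j+1}=\alpha_{j+2}=\alpha_j$ and $\{\lambda_{j+1},\lambda_{j+2}\}=\{\lambda_jq^{-2},\lambda_jq^2\}$, immediately giving $i_1,i_2$.

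For the inductive step $k-j>3$, sliding $(\alpha_j,\lambda_j)$ rightward from position $j$ must terminate at a first obstruction $p_R\in(j,k)$ with $\alpha_{p_R}=\alpha_j$ and $\lambda_{p_R}=\lambda_jq^{2\epsilon_R}$ (otherwise the slide ends with two adjacent equal columns in a new element of $\spec$, contradicting~(a)); symmetrically, sliding $(\alpha_k,\lambda_k)$ leftward gives a first obstruction $p_L\in(j,k)$ with $\lambda_{p_L}=\lambda_jq^{2\epsilon_L}$. If $\epsilon_R\neq\epsilon_L$, I take $\{i_1,i_2\}=\{p_R,p_L\}$. The expected main obstacle is $\epsilon_R=\epsilon_L$, say both~$+1$: if $p_R\neq p_L$, the pair $(p_R,p_L)$ has strictly smaller gap and common $\lambda$-value $\lambda_jq^2$, and applying the inductive hypothesis twice---first to $(p_R,p_L)$ to extract an interior position of value $\lambda_j$, then to the resulting smaller $\lambda_j$-pair---yields an interior position of value $\lambda_jq^{-2}$; if instead $p_R=p_L=p$, the post-slide array $\Lambda_R\in\spec$ contains a pair at positions $(p-1,k)$ of strictly smaller gap, to which induction applies and produces a position in $(p,k)$ of the original $\Lambda$ with $\lambda=\lambda_jq^{-2}$. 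In either case $i_1,i_2\in[j+1,k-1]$ are obtained, completing the induction.
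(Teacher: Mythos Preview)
Your strategy matches the paper's: verify condition~(1) directly, prove~(2) by moving $(\a_j,\l_j)$ leftward via Proposition~\ref{prop-spec}(c)--(d) until reaching position~$1$, and prove~(3) by induction on $k-j$ using the swap/obstruction dichotomy. The key mechanism in the same-sign case---apply the inductive hypothesis to a pair with common value $\l_jq^{2\epsilon}$ to extract an interior position of value $\l_j$, then recurse on the resulting smaller $\l_j$-pair---is exactly what the paper does.

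The one difference is that you slide all the way to the first obstructions $p_R,p_L$, whereas the paper only examines the immediate neighbours $j+1$ and $k-1$. The paper's version is tidier: the pair $(j+1,k-1)$ automatically has gap $k-j-2<k-j$, so the induction applies with no further argument. Your sliding introduces a small gap you have not addressed: in the case $p_R=p_L=p$, your claim that the pair $(p-1,k)$ in $\Lambda_R$ has \emph{strictly} smaller gap requires $p>j+1$. If $p=j+1$ then $\Lambda_R=\Lambda$ and nothing is gained. This is fixable---if $p_R=p_L=j+1$ is the \emph{only} obstruction in $(j,k)$, then the leftward slide from $k$ reaches position $j+2$ and produces an element of $\spec$ with equal columns at positions $j$ and $j+2$, contradicting the $k-j\geq 3$ part you already established---but you should say so. Symmetrically, $p=k-1$ is excluded by sliding from the left. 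Once $j+1<p<k-1$, your argument goes through.

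A second minor point: in your condition~(2) argument and in the sliding for~(3), you should note that the slide cannot be blocked by a \emph{collision} (an earlier column equal to $(\a_j,\l_j)$) before an obstruction, since the pre-collision array would lie in $\spec$ with adjacent equal columns, contradicting Proposition~\ref{prop-spec}(a). You implicitly use this when asserting that $p_R,p_L$ exist with $\l_{p_R},\l_{p_L}\in\{\l_jq^{\pm2}\}$.
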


\begin{proof}
Following Proposition \ref{prop-spec}(a), for all $\Lambda=\left(\begin{array}{ccc}\a_1&,\, \dots\, ,&\a_n\\[.3em]
\l_1&,\, \dots\, ,&\l_n\end{array}\right)\in 
\spec$,  we have $\a_i^d=1$ for all $i=1,\dots,n$. Moreover, since $J_1=1$, we have
$\l_1=1$. Thus, condition (1) of Definition \ref{def-cont} is satisfied.

We will prove that condition (2) of Definition \ref{def-cont} holds for all $\Lambda\in \spec$ by induction on $j$.
First, let $j=2$ and assume that $\l_2\neq 1$. If $\a_1 \neq \a_2$, then, by Proposition \ref{prop-spec}(d), we have that
\[\Lambda'=\left(\begin{array}{ccccc}\a_2,&\,\a_{1}\ ,& \, \dots\, ,&\a_n\\[.3em]
\l_2,&\, \l_1,&\, \dots\, ,&\l_n\end{array}\right)
=\left(\begin{array}{ccc}\a_1',&\, \dots\, ,&\a_n'\\[.3em]
\l_1',&\, \dots\, ,&\l_n'\end{array}\right)
\in \spec\ ,\]
which contradicts the fact that $\l_1'=1$. So we must have $\a_1 = \a_2$. Now if $\l_2 \neq q^{\pm2}$, then Proposition \ref{prop-spec}(c) yields again that $\Lambda' \in \spec$.
So we deduce that $\l_2= q^{\pm2}$ and condition (2) is satisfied.

Similarly, if $j>2$ and $\l_j\neq 1$, then, unless $\a_{j-1} = \a_j$ and $\l_{j-1}=  \l_jq^{\pm2}$, Proposition \ref{prop-spec} implies that 
\[\Lambda'=\left(\begin{array}{cccccc}\a_1&,\, \dots\, ,&\a_{j}\ ,& \a_{j-1}&,\, \dots\, ,&\a_n\\[.3em]
\l_1&,\, \dots\, ,&\l_{j} ,&  \l_{j-1}&,\, \dots\, ,&\l_n\end{array}\right)\in \spec\ ,\]
and induction hypothesis yields the desired result. In every case, condition (2) holds.

Now, let $1\leq j<k\leq n$ such that  $\a_j=\a_k$ and $\l_j=\l_k$. We will prove that condition (3) of Definition \ref{def-cont} holds for all $\Lambda\in \spec$. 
We will first show that  $k-j\geq3$. 
 Note that, due to Proposition \ref{prop-spec}(a), we must have $k-j>1$. If $k-j=2$, then, unless $\a_{j+1}=\a_j$ and $\l_{j+1}=\l_j
 q^{\pm2}$, Proposition \ref{prop-spec}(c)--(d) 
  implies that 
 \[\Lambda'=\left(\begin{array}{ccccccc}\a_1&,\, \dots\, ,&\a_{j+1}\ ,& \a_j,& \a_k&\, \dots\, ,&\a_n\\[.3em]
\l_1&,\, \dots\, ,&\l_{j+1} ,&  \l_j,& \l_k&,\, \dots\, ,&\l_n\end{array}\right)\in \spec\ ,\]
which contradicts Proposition \ref{prop-spec}(a). However, if $\a_{j+1}=\a_j$ and $\l_{j+1}=\l_j
 q^{2\epsilon}$, with $\epsilon=\pm1$, then Proposition \ref{prop-spec}(b) implies that $g_j({\bf v}_{\Lambda})=\epsilon q^{\epsilon}{\bf v}_{\Lambda}$ and $g_{j+1}({\bf v}_{\Lambda})=-\epsilon q^{-\epsilon}{\bf v}_{\Lambda}$, which contradicts the relation $g_jg_{j+1}g_j=g_{j+1}g_jg_{j+1}$. Thus, we must have $k-j \geq 3$.
 
We will now prove that there exist $j+1 \leq i_1, \, i_2 \leq k-1$ such that $ \a_{i_1}=\a_{i_2}=\a_j$, 
$\l_{i_1}=\l_jq^{-2}$ and $\l_{i_2}=\l_jq^2$ by induction on the difference $k-j$. 
First assume that $k-j=3$. 
Unless $\a_j=\a_{j+1}=\a_{k-1}$, $\l_{j+1}=\l_jq^{\pm 2}$ and $\l_{k-1}=\l_kq^{\pm 2}$,
we can use Proposition \ref{prop-spec}(c)--(d) 
to go back to the case $k-j=2$. Moreover, by Proposition \ref{prop-spec}(a), if $\a_{j+1}=\a_{k-1}$, we must have  $\l_{j+1}\neq\l_{k-1}$.
Thus, by taking $\{i_1,i_2\}=\{j+1,k-1\}$, condition (3) is satisfied.

Finally, assume that $k-j >3$. Now, unless
$\a_j=\a_{j+1}=\a_{k-1}$,
$\l_{j+1}=\l_jq^{\pm2}$ and $\l_{k-1}=\l_kq^{\pm2}$, Proposition 
\ref{prop-spec}(c)-(d) 
implies that
 \[\Lambda'=\left(\begin{array}{llllllll}\a_1&,\, \dots\, ,&\a_{j+1},& \a_j,    &\, \dots\, , & \a_k ,&\, \dots\, ,&\a_n\\[.3em]
\l_1&,\, \dots\, ,&\l_{j+1} ,&  \l_j,& \, \dots\, ,& \l_k,&  \, \dots\, ,&\l_n\end{array}\right)\in \spec\ ,\] 
or
 \[\Lambda''=\left(\begin{array}{llllllll}\a_1&,\, \dots\, ,&\a_j,    &\, \dots\, , & \a_{k},& \a_{k-1} ,&\, \dots\, ,&\a_n\\[.3em]
\l_1&,\, \dots\, ,&  \l_j,& \, \dots\, ,& \l_{k} ,&\l_{k-1},&  \, \dots\, ,&\l_n\end{array}\right)\in \spec\ ,\] 
and induction hypothesis yields the desired result.
If $\a_j=\a_{j+1}=\a_{k-1}$,
$\l_{j+1}=\l_jq^{\pm2}$ and $\l_{k-1}=\l_kq^{\pm2}$, then we have to distinguish two cases. If $\l_{j+1}=\l_{k-1}=\l_jq^{2\epsilon}$, with $\epsilon=\pm1$, then the induction hypothesis implies  that there exists $j+1<l<k-1$ such that $\a_l=\a_{j+1}=\a_j$ and $\l_l=\l_{j+1}q^{-2\epsilon}=\l_j$. Thus we can replace $(j,k)$ by $(j,l)$, and the induction hypothesis yields the rest.
If $\l_{j+1} \neq \l_{k-1}$, then by taking $\{i_1,i_2\}=\{j+1,k-1\}$, we can have 
$\a_{i_1}=\a_{i_2}=\a_j$, 
$\l_{i_1}=\l_jq^{-2}$ and $\l_{i_2}=\l_jq^2$, 
and so condition (3) is satisfied.
\end{proof}

\subsection{Standard $d$-tableaux and spectrum of the Jucys--Murphy elements}
\noindent In this subsection  we will interpret the $2\times n$ arrays  belonging to $\cont$ as standard $d$-tableaux. First, we will need to recall some basic notions and facts about the combinatorics of $d$-partitions.

\subsubsection{Combinatorics of $d$-partitions}
Let $\lambda\vdash n$ be a partition of $n$, that is, $\lambda=(\lambda_1,\dots,\lambda_k)$ is a family of  positive integers such that $\lambda_1\geq\lambda_2\geq\dots\geq\lambda_k \geq 1$ and $|\lambda|:=\lambda_1+\dots+\lambda_k=n$. We shall also say that $\lambda$ is a partition {\em of size} $n$. 

We identify partitions with their Young diagrams: the Young diagram of $\lambda$ is a left-justified array of $k$ rows such that
the $j$-th row contains  $\lambda_j$ {\em nodes } for all $j=1,\dots,k$. We write $\theta=(x,y)$ for the node in row $x$ and column $y$. A node $\theta \in \lambda$ is called {\it removable} if the set of nodes obtained from $\lambda$ by removing $\theta$ is still a partition. A node $\theta' \notin \lambda$ is called {\it  addable} if the set of nodes obtained from $\lambda$ by adding $\theta'$ is still a partition. 

The {\em conjugate partition} $\lambda'=(\lambda'_1,\dots,\lambda'_l)$  of $\lambda$ is defined by $\lambda'_{j}:={\#\{i\,|\,1 \leq i \leq k \text{ such that } \lambda_i\geq j\}}$ 
and $l:=\text{max}\{j\,|\,\lambda_j'\neq 0\}$. 
The Young diagram of $\lambda'$ is the transpose of the Young diagram of $\lambda$. A node $\theta=(x,y) \in \lambda$ if and only if $(y,x) \in \lambda'$.

A $d$-partition $\blambda$, or a Young $d$-diagram, of size $n$ is a $d$-tuple of partitions such that the total number of nodes in the associated Young diagrams is equal to $n$. That is, we have $\blambda=(\blambda^{(1)},\dots,\blambda^{(d)})$ with $\blambda^{(1)},\dots,\blambda^{(d)}$ usual partitions such that $|\blambda^{(1)}|+\dots+|\blambda^{(d)}|=n$.

A pair $\btheta=(\theta,k)$ consisting of a node $\theta$ and an integer $k\in\{1,\dots,d\}$ is called a $d$-node. The integer $k$ is called the \emph{position} of $\btheta$. A $d$-partition is then a set of $d$-nodes such that the subset consisting of the $d$-nodes having position $k$ forms a usual partition, for any $k\in\{1,\dots,d\}$.

Let ${\blambda}=(\blambda^{(1)},\dots,\blambda^{(d)})$ be a $d$-partition. A $d$-node $\btheta=(\theta,k)\in\blambda$ is called removable from $\blambda$ if the node $\theta$ is removable from $\lambda^{(k)}$. A $d$-node $\btheta'=(\theta',k')\notin\blambda$ is called addable to $\blambda$ if the node $\theta'$ is addable to  $\lambda^{(k')}$. The set of $d$-nodes removable from $\blambda$ is denoted by ${\mathcal{E}}_-(\blambda)$ and the set of $d$-nodes addable to $\blambda$ is denoted by ${\mathcal{E}}_+(\blambda)$. For example, the removable/addable $3$-nodes (marked with $-/+$) for the $3$-partition $\left(\Box\!\Box,\varnothing,\Box\right)$ 
are:
\[\left(\begin{array}{l}\fbox{$\phantom{-}$}\fbox{$-$}\fbox{$+$}\\ \fbox{$+$}\end{array}
\, ,\, \begin{array}{l}\fbox{$+$}\\ \phantom{\fbox{$-$}}\end{array}\, ,\,\begin{array}{l}\fbox{$-$}\fbox{$+$}\\ \fbox{$+$}\end{array}\right). \]
We will write $\blambda \setminus \{\btheta\}$ for the $d$-partition obtained by removing the $d$-node  $\btheta \in {\mathcal{E}}_-(\blambda)$ from $\blambda$,
and
$\blambda \cup \{\btheta'\}$ for the $d$-partition obtained by adding the $d$-node  $\btheta' \in {\mathcal{E}}_+(\blambda)$ to $\blambda$.

For a $d$-node $\btheta$ lying in the line $x$ and the column $y$ of the $k$-th diagram of $\blambda$ (that is, $\btheta=(x,y,k)$), we define $\pos(\btheta):=k$ and $\cc(\btheta):=q^{2(y-x)}$. The number $\pos(\btheta)$ is the position of $\btheta$ and the number $\cc(\btheta)$ is called the \emph{(quantum) content} of $\btheta$. 

If two $d$-nodes $\btheta, \btheta' \in \blambda$ satisfy $\pos(\btheta)=\pos(\btheta')$ and $\cc(\btheta)=\cc(\btheta')$,  they lie in the same diagonal of the Young diagram of $\blambda^{(\pos(\btheta))}$. If, moreover,
$\cc(\btheta)=\cc(\btheta')=1$, then $\btheta, \btheta'$ lie in the main diagonal of $\blambda^{(\pos(\btheta))}$.

Finally, we define the {\em hook length} $\mathrm{hl}({\btheta})$ of  the $d$-node $\btheta=(x,y,k)$ to be the integer 
\begin{equation}\label{hl}\mathrm{hl}({\btheta}):=\blambda^{(k)}_x-x+\blambda_y^{(k)'}-y+1\ .\end{equation} 

\subsubsection{Standard $d$-tableaux}
Let $\blambda=(\blambda^{(1)},\ldots,\blambda^{(d)})$ be a $d$-partition of $n$. A {\em $d$-tableau of shape $\blambda$
} is a bijection between the set $\{1,\dots,n\}$ and the set of $d$-nodes in $\blambda$. In other words, a $d$-tableau of shape $\blambda$ is obtained by placing the numbers $1,\dots,n$ in the $d$-nodes of $\blambda$. 
The \emph{size} of a $d$-tableau of shape $\blambda$ is $n$, that is, the size of $\blambda$.  
 A $d$-tableau is {\em standard} if its entries  increase along any row and down 
 any column of every diagram in $\blambda$. For $d=1$, a standard $1$-tableau is a usual standard tableau.

For a 
$d$-tableau ${\mathcal{T}}$, we denote respectively by $\pos({\mathcal{T}}|i)$ and $\cc({\mathcal{T}}|i)$  the position and the quantum content  of the $d$-node with the number $i$ in it. For example, for the standard $3$-tableau ${\mathcal{T}}^{^{\phantom{A}}}\!\!\!\!={\textrm{$\left(
\,\fbox{\scriptsize{$1$}}\fbox{\scriptsize{$3$}}\, ,\,\varnothing\, ,\,\fbox{\scriptsize{$2$}}\,\right)$}}$ of size $3$, we have
\[\pos({\mathcal{T}}|1)=1\,,\ \ \pos({\mathcal{T}}|2)=3\,,\ \ \pos({\mathcal{T}}|3)=1\ \ \ \ \ \text{and}\ \ \ \ \  \cc({\mathcal{T}}|1)=1\,,\ \ \cc({\mathcal{T}}|2)=1\,,\ \ \cc({\mathcal{T}}|3)=q^2\,.\]

From now on, we will denote by $\mathrm{STab}_d(n)$ the set of all standard $d$-tableaux of size $n$ (of any shape).

\begin{prop}\label{cont-tab}
The set $\mathrm{STab}_d(n)$ is in bijection with the set $\cont$.
\end{prop}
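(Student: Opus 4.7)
My plan is to construct an explicit bijection $\Phi: \mathrm{STab}_d(n) \to \cont$. Fix a primitive $d$-th root of unity $\zeta \in \C$ and identify each position $p \in \{1, \ldots, d\}$ with $\zeta^{p-1}$. For $\mathcal{T} \in \mathrm{STab}_d(n)$, define
\[
\Phi(\mathcal{T}) := \left(\begin{array}{ccc} \zeta^{\pos(\mathcal{T}|1) - 1}, & \dots, & \zeta^{\pos(\mathcal{T}|n) - 1} \\[.3em] \cc(\mathcal{T}|1), & \dots, & \cc(\mathcal{T}|n) \end{array}\right).
\]
The first task is to verify $\Phi(\mathcal{T}) \in \cont$. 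Condition (1) is immediate since $1$ must sit at a $(1,1,p)$-node with content $1$. For condition (2), if $\cc(\mathcal{T}|j) \neq 1$, the node of $j$ is $(x,y,p)$ with $x \neq y$; the row- or column-neighbour in the same component has a smaller entry by standardness and the adjacent content $\cc(\mathcal{T}|j)\,q^{\pm 2}$. For condition (3), suppose $j < k$ lie at $(x_j, x_j+c,p)$ and $(x_k, x_k+c,p)$ on the same diagonal of the same component, necessarily with $x_j < x_k$ (by diagonal monotonicity of $\mathcal{T}$). The Young diagram property forces $(x_j+1, x_j+c, p)$ and $(x_j, x_j+c+1, p)$ into $\mathcal{T}$, and their entries $i_1, i_2$ (on diagonals $c-1$ and $c+1$) are bounded above by $k$ via row/column standardness chases through $(x_k, x_j+c)$ and $(x_k, x_k+c-1)$ respectively; strict diagonal monotonicity then forces $k-j \geq 3$.

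Next I would prove bijectivity by induction on $n$, the base case $n=1$ being trivial. The key auxiliary observation is that any Young diagram has at most one addable node on each fixed diagonal, so the pair (position $p$, diagonal $c$) determines at most one placement of a new entry, which yields injectivity. For surjectivity, given $\Lambda \in \cont$, the restriction $\Lambda'$ of $\Lambda$ to its first $n-1$ columns manifestly lies in $\cont_d(n-1)$, and by induction equals $\Phi(\mathcal{T}')$ for a unique $\mathcal{T}' \in \mathrm{STab}_d(n-1)$. It then remains to exhibit an addable $d$-node of $\mathcal{T}'$ at position $p$ and diagonal $c$, where $\a_n = \zeta^{p-1}$ and $\l_n = q^{2c}$.

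This existence step is the main obstacle. Let $\bmu$ denote the shape of $\mathcal{T}'$. Working in $\mu^{(p)}$ with $c \geq 0$ (the case $c < 0$ is symmetric after transposition), the existing nodes on diagonal $c$ form a consecutive run $(1, c+1), \ldots, (k, k+c)$, and the prospective node $(k+1, k+1+c)$ fails to be addable only in three situations: \emph{(iii)} $k = 0$ and $\mu^{(p)}_1 < c$; \emph{(i)} $k \geq 1$ and $\mu^{(p)}_k = k+c$, so $(k, k+c)$ is a corner; or \emph{(ii)} $k \geq 1$, $\mu^{(p)}_k \geq k+c+1$ and $\mu^{(p)}_{k+1} \leq k+c-1$. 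Each failure mode will be excluded by the defining conditions of $\cont$. Situation (iii) contradicts condition (2): an entry $j < n$ at position $p$ on diagonal $c \pm 1$ would force $\mu^{(p)}_1 \geq c$. In situation (i), letting $j$ be the corner entry, condition (3) applied to the pair $(j, n)$ supplies $i_2$ with $j < i_2 < n$ at a node $(x, x+c+1, p)$; the corner constraint forces $x \leq k-1$, and row/column standardness through $(k, x+c+1) \to (k, k+c)$ yields $i_2 < j$, a contradiction. Situation (ii) is handled in the same spirit using the $i_1$ supplied by condition (3): the row constraint $\mu^{(p)}_{k+1} \leq k+c-1$ confines $i_1$ to row $\leq k$, and a path through $(k, y+c-1) \to (k, k+c)$ again gives $i_1 < j$. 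With all failure modes excluded the required addable node exists, and placing $n$ there yields $\mathcal{T}$ with $\Phi(\mathcal{T}) = \Lambda$, completing the induction. The entire difficulty of the argument is concentrated in this case analysis; the rest is Young-diagram bookkeeping.
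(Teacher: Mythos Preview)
Your argument is correct and follows essentially the same route as the paper: the same map $\Phi$, the same verification that its image lies in $\cont$, and the same inductive construction for surjectivity, with your explicit three-case failure-mode analysis mirroring the paper's split into ``new diagonal'' (your case (iii), handled by condition~(2)) versus ``existing diagonal'' (your cases (i)--(ii), handled by condition~(3)). One small slip: in your verification of condition~(3), the intermediate node in the $i_2$-chase should be $(x_k, x_j+c+1)$ (or equivalently $(x_k-1, x_k+c)$), not $(x_k, x_k+c-1)$.
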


\begin{proof}
Let $\{\xi_1,\dots,\xi_d\}$ be the set of all $d$-th roots of unity (ordered arbitrarily), and let $\mathcal{A}^{2 \times n}$ be the set of all $2 \times n$ arrays with entries in $\C(q)$. 
We  construct a map $f: \mathrm{STab}_d(n) \rightarrow \mathcal{A}^{2 \times n}$ such that
$$f \left( {\mathcal{T}} \right) =  \left(\begin{array}{ccc}\a_1&,\, \dots\, ,&\a_n\\[.3em]
\l_1&,\, \dots\, ,&\l_n\end{array}\right), $$
where, for all $i=1,\ldots,n$, $\a_i := \xi_{\pos({\mathcal{T}}|i)}$ and $\l_i: = \cc({\mathcal{T}}|i)$.

Let ${\mathcal{T}}, {\mathcal{T}}' \in \mathrm{STab}_d(n)$ such that $f( {\mathcal{T}})=f( {\mathcal{T}}' )$.
 Then, for all $i=1,\ldots,n$, we have
\[\pos({\mathcal{T}}|i)=\pos({\mathcal{T}'}|i)\ \ \ \ \ \text{and}\ \ \ \ \  \cc({\mathcal{T}}|i)= \cc({\mathcal{T}}'|i)\,.\]
So ${\mathcal{T}}$ and ${\mathcal{T}}'$ must have the same shape, and moreover the $d$-tableau ${\mathcal{T}}'$ is obtained from ${\mathcal{T}}$ by permuting the entries inside each diagonal of each diagram. 
However, since ${\mathcal{T}}, {\mathcal{T}}'$ are standard $d$-tableaux, the entries increase across each diagonal and therefore we cannot have a non-trivial permutation of the entries.
We conclude that ${\mathcal{T}}={\mathcal{T}}'$, and thus $f$ is injective. 

In order to obtain the desired bijection, it remains to show that ${\rm Im}f=\cont$. 
Let
$$\left(\begin{array}{ccc}\a_1&,\, \dots\, ,&\a_n\\[.3em]
\l_1&,\, \dots\, ,&\l_n\end{array}\right)= f \left( {\mathcal{T}} \right) $$
for some ${\mathcal{T}} \in \mathrm{STab}_d(n)$ of shape $\blambda=(\blambda^{(1)},\dots,\blambda^{(d)})$.
By definition of $f$, we have $\a_i^d=1$ for all $i=1,\ldots,n$. Moreover, we always have $\cc({\mathcal{T}}|1)=1$, and so $\l_1=1$.

Now assume that $\l_j = \cc({\mathcal{T}}|j) \neq 1$ for some $j=2,\ldots,n$. Then the number $j$ is in a node of the diagram 
of $\blambda^{(\pos({\mathcal{T}}|j))}$, not on the main diagonal (which contains the nodes with quantum {content $1$}). So there is surely a node left to it or above it. If there is a node left to it, then the number $i$ in it will satisfy $i < j$, $\pos({\mathcal{T}}|i)=\pos({\mathcal{T}}|j)$ and $\cc({\mathcal{T}}|i)=\cc({\mathcal{T}}|j)q^{-2}$ (since the $d$-tableau ${\mathcal{T}}$ is standard).
Similarly, if there is a node above it, then the number $i$ in it will satisfy $i < j$, $\pos({\mathcal{T}}|i)=\pos({\mathcal{T}}|j)$ and $\cc({\mathcal{T}}|i)=\cc({\mathcal{T}}|j)q^2$.
Thus, there exists $i<j$ such that $\a_i=\a_j$ and
$\l_i\in\{\l_jq^{-2},\l_jq^2\}$.

Finally, assume that $\a_j=\a_k$ and $\l_j=\l_k$ for some $1 \leq j < k \leq n$.
Then $\pos({\mathcal{T}}|j)=\pos({\mathcal{T}}|k)$ and the numbers $j$ and $k$ are in nodes $\btheta$ and $\btheta'$ respectively belonging to the same diagonal of the diagram of  $\blambda^{(\pos({\mathcal{T}}|j))}$. Since $\mathcal{T}$ is a standard $d$-tableau and $j<k$,
there must be a node just under $\btheta$, with a number $i_1$ in it, and a node just on the right of $\btheta$, with an entry $i_2$ in it; moreover,  we have $j+1 \leq i_1,i_2 \leq k-1$. Therefore, $k-j\geq3$, and there exist $j+1 \leq i_1, \, i_2 \leq k-1$ such that $\a_{i_1}=\a_{i_2}=\a_j$, 
$\l_{i_1}=\l_jq^{-2}$ and $\l_{i_2}=\l_jq^2$.
We conclude that ${\rm Im}f \subseteq \cont$.

Now let
$$\Lambda=\left(\begin{array}{ccc}\a_1&,\, \dots\, ,&\a_n\\[.3em]
\l_1&,\, \dots\, ,&\l_n\end{array}\right) \in \cont.$$
We will  show that $\Lambda \in {\rm Im}f $ by constructing a standard $d$-tableau $\mathcal{T}$ of size $n$ such that $f(\mathcal{T})=\Lambda$.
In order to do this, we will start with the empty $d$-tableau and in each step we will add a $d$-node with the entry $i$ in it so that
$$\a_i = \xi_{\pos({\mathcal{T}}|i)}\ \ \ \ \ \text{and}\ \ \ \ \  \l_i=\cc({\mathcal{T}}|i).$$
The $d$-node with the entry $i$ will be added in the first non-occupied position of the diagonal determined by $\pos({\mathcal{T}}|i)$ and $\cc({\mathcal{T}}|i)$.
We will then show that, 
after  each step, the result of the construction is a standard $d$-tableau.

We will proceed by induction on $i$. The $d$-node with the entry $1$ will be added in the position $\pos({\mathcal{T}}|1)$ (determined by $\a_1$).
Now assume that we have constructed a standard $d$-tableau $\mathcal{T}'$ of size $i-1$ such that
$$\a_j = \xi_{\pos({\mathcal{T}}'|j)} \ \ \ \ \ \text{and}\ \ \ \ \  \l_j = \cc({\mathcal{T}}'|j)$$
for all $j=1,\ldots,i-1$.
Let $\blambda=(\blambda^{(1)},\dots,\blambda^{(d)})$ be the shape of $\mathcal{T}'$.
Then the $d$-node with the entry $i$ has to be added to the partition $\blambda^{(\pos(\cT|i))}$ in
the first non-occupied position of the diagonal with quantum content $\l_i$. 

First assume that $\l_j \neq \l_i$ for all $j=1,\ldots,i-1$ such that $\a_j=\a_i$. Then adding the node with the entry $i$ to $\blambda^{(\pos(\cT'|i))}$ will construct a new diagonal.
If $\l_i =1$ then, by induction hypothesis, 
the partition $\blambda^{(\pos(\cT'|i))}$ 
must be 
empty and the node with the entry $i$ is added in the first place of the main diagonal. 
If $\l_i \neq 1$, then by property (2) of Definition \ref{def-cont}, there exists $k < i$ such that $\a_k=\a_i$ and $\l_k  \in \{\l_i q^{-2},\l_i q^{2}\}$.  
As $\l_j \neq \l_i$ for all $j=1,\ldots,i-1$ such that $\a_j=\a_i$, there is a unique $k$ satisfying these conditions since, by induction hypothesis, $\cT'$ is a standard $d$-tableau. If $\l_k=\l_i q^{-2}$, then the node with the entry $i$ is added just to the right of the node containing $k$.
If $\l_k=\l_i q^2$, then the node with the entry $i$ is added just under the node containing $k$. In every case, since the $d$-tableau $\mathcal{T}'$ is standard, the result is a standard $d$-tableau of size $i$.

Otherwise, let $j$ be the largest integer such that $j<i$, 
$\a_j=\a_i$ and $\l_j=\l_i$. By property (3) of Definition \ref{def-cont}, 
 there exist $j+1 \leq i_1, \, i_2 \leq i-1$ such that $\a_j=\a_{i_1}=\a_{i_2}=\a_i$, 
$\l_{i_1}=\l_jq^{-2}$ and $\l_{i_2}=\l_jq^2$. Since the $d$-tableau $\mathcal{T}'$ is standard, this implies that the $d$-node $(x,y,\pos({\mathcal{T}}|i))$ containing $j$ has a node under it and a node on its right.  We can thus add to $\blambda^{(\pos(\cT|i))}$ the $d$-node $(x+1,y+1,\pos({\mathcal{T}}|i))$ with the entry $i$ in it.
As the $d$-tableau $\mathcal{T}'$ is standard and $i_1,\,i_2 < i$, the result is a standard $d$-tableau of size $i$.

We have thus shown that $\cont \subseteq {\rm Im}f$, whence we deduce that ${\rm Im}f=\cont$. Therefore, the map $f$ induces a bijection between  
$\mathrm{STab}_d(n)$ and $\cont$.
\end{proof}

Here is an example of the correspondence of  Proposition \ref{cont-tab}. The following $2\times 10$ array of ${\mathrm{Cont}}_3(10)$
\[\left(\begin{array}{ccccccccccccccccccc}\xi_1&,&\xi_1&,&\xi_3&,&\xi_1&,&\xi_3&,&\xi_1&,&\xi_1&,&
\xi_2&,&\xi_1&,&\xi_3\\[.3em]
1&,&q^2&,&1&,&q^4&,&q^{-2}&,&q^{-2}&,&q^{-4}&,&1&,&1&,&q^2
         \end{array}\right),\]
where $\{ \xi_1,\xi_2,\xi_3\}$ is the set of all cubic roots of unity, corresponds to the following standard $3$-tableau of size $10$:
$$\left(\begin{array}{l} \fbox{\scriptsize{1}}\fbox{\scriptsize{2}}\fbox{\scriptsize{4}}\\[-0.14em]
\fbox{\scriptsize{6}}\fbox{\scriptsize{9}}\\[-0.18em]
\fbox{\scriptsize{7}}\end{array}\ ,\ \  \fbox{\scriptsize{8}}\ \ ,
\ \begin{array}{l} \fbox{\scriptsize{3}}\fbox{\scriptsize{10}}\\[-0.18em]
\fbox{\scriptsize{5}}\end{array}\right)\ .$$

\section{Representations of the Yokonuma--Hecke algebra}\label{sec-rep}

In this section, we 
will 
construct explicitly the irreducible representations of the Yokonuma--Hecke algebra $\C(q){\rm Y}_{d,n}(q)$. The study of the representations of $\C(q)\widehat{\rm Y}_{d,2}(q)$  in \S \ref{affine_{d,2}} suggests the action, for $i=1,\dots,n-1$, of the generators $g_i$, $t_i$ and $t_{i+1}$ of $\C(q){\rm Y}_{d,n}(q)$ on vectors indexed by the standard $d$-tableaux of size $n$. 
We will verify 
that this action extends to an action of the whole algebra $\C(q){\rm Y}_{d,n}(q)$, 
and then prove that all representations of $\C(q){\rm Y}_{d,n}(q)$ are obtained this way. 

\subsection{Formulas for the representations of $\C(q){\rm Y}_{d,n}(q)$}
For any $d$-tableau $\cT$ of size $n$ and any permutation $\sigma \in \mathfrak{S}_n$, we denote by $\cT^{\sigma}$ the $d$-tableau obtained from $\cT$ by applying the permutation $\sigma$ on the numbers contained in the $d$-nodes of $\cT$. We have 
\begin{equation}\label{pos-con}\pos(\cT^{\sigma}|i)=\pos\bigl(\cT|\sigma^{-1}(i)\bigr)\ \ \ \text{and}\ \ \ \cc(\cT^{\sigma}|i)=\cc\bigl(\cT|\sigma^{-1}(i)\bigr)\ \ \ \ \ \ \text{for all $i=1,\dots,n$.}\end{equation}
Note that if the $d$-tableau $\cT$ is standard, the $d$-tableau $\cT^{\sigma}$ is not necessarily standard. 

Let $\{\xi_1,\dots,\xi_d\}$ be the set of all $d$-th roots of unity (ordered arbitrarily). Let  $\mathcal{P}(d,n)$ be the set of all $d$-partitions of $n$, and let
 $\blambda \in \mathcal{P}(d,n)$. Let $V_{\blambda}$ be a  $\C(q)$-vector space with a basis $\{\bv_{_{\cT}}\}$ indexed by the standard $d$-tableaux of shape $\blambda$. We set $\bv_{_{\cT}}:=0$ for any non-standard $d$-tableau $\cT$ of shape $\blambda$.

\begin{prop}\label{prop-rep} 
Let $\cT$ be a standard $d$-tableau of shape $\blambda  \in \mathcal{P}(d,n)$. For brevity, we set $\pos_i:=\pos(\cT|i)$ and $\cc_i:=\cc(\cT|i)$ for $i=1,\dots,n$. 
The vector space $V_{\blambda}$ is a representation of $\C(q){\rm Y}_{d,n}(q)$ with the action of the generators on the basis element $\bv_{_{\cT}}$ defined as follows:
for $j=1,\dots,n$,
\begin{equation}\label{rep-t}
t_j(\bv_{_{\cT}})=\xi_{\pos_j}\bv_{_{\cT}}\  ;
\end{equation}
for $i=1,\dots,n-1$, if $\pos_{i} \neq \pos_{i+1}$ then
\begin{equation}\label{rep-g1}
g_i(\bv_{_{\cT}})=\bv_{_{\cT^{s_i}}}\ ,
\end{equation}
and if $\pos_{i}=\pos_{i+1}$ then
\begin{equation}\label{rep-g2}
g_i(\bv_{_{\cT}})=\frac{\cc_{i+1}(q-q^{-1})}{\cc_{i+1}-\cc_i}\,\bv_{_{\cT}}+\frac{q\cc_{i+1}-q^{-1}\cc_i}{\cc_{i+1}-\cc_i}\,\bv_{_{\cT^{s_i}}}\ ,
\end{equation}
where $s_i$ is the transposition $(i,i+1)$.
\end{prop}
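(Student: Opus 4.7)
The plan is to verify each defining relation of ${\rm Y}_{d,n}(q)$ from (\ref{modular})--(\ref{quadr}) by a direct calculation on the basis $\{\bv_{\cT}\}$. The framing relations $(\mathrm{f}_1)$ and $(\mathrm{f}_3)$ are immediate because each $t_j$ acts diagonally with eigenvalue $\xi_{\pos(\cT|j)}$, a $d$-th root of unity. The distant commutation $(\mathrm{b}_1)$ for $|i-j|>1$ holds term by term, since the scalars in (\ref{rep-g1})--(\ref{rep-g2}) for $g_i$ depend only on the data at $i,i+1$ while those for $g_j$ depend only on the data at $j,j+1$, and $s_i,s_j$ have disjoint support in $\{1,\dots,n\}$. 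Relation $(\mathrm{f}_2)$ follows from the identity $\pos(\cT^{s_i}|j)=\pos(\cT|s_i(j))$ in (\ref{pos-con}): combined with the observation that $\pos_i=\pos_{i+1}$ forces $\xi_{\pos(\cT|s_i(j))}=\xi_{\pos(\cT|j)}$ for every $j$, both sides of $t_jg_i=g_it_{s_i(j)}$ are seen to produce the same scalar multiple of $g_i(\bv_{\cT})$.

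For the quadratic relation, one first computes $e_i(\bv_{\cT})=\bigl(\frac{1}{d}\sum_{s=0}^{d-1}\xi_{\pos_i}^s\xi_{\pos_{i+1}}^{-s}\bigr)\bv_{\cT}$, which equals $\bv_{\cT}$ when $\pos_i=\pos_{i+1}$ and $0$ otherwise by the orthogonality of roots of unity. When $\pos_i\neq\pos_{i+1}$, two applications of (\ref{rep-g1}) yield $g_i^2(\bv_{\cT})=\bv_{\cT}$, matching $1+0\cdot g_i$. When $\pos_i=\pos_{i+1}$ and $\cT^{s_i}$ is standard, a $2\times 2$ matrix computation with the coefficients of (\ref{rep-g2}) confirms $g_i^2=1+(q-q^{-1})g_i$, the crucial algebraic identity being that the two ``diagonal'' scalars sum to $q-q^{-1}$. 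If $\cT^{s_i}$ is non-standard, then $i+1$ lies immediately to the right of or below $i$ in the same component, forcing $\cc_{i+1}=q^{\pm 2}\cc_i$; the formula (\ref{rep-g2}) then degenerates to $g_i(\bv_{\cT})=q\bv_{\cT}$ or $-q^{-1}\bv_{\cT}$ (the second coefficient either vanishes or multiplies the zero vector), and the quadratic relation is verified trivially.

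The main obstacle is the braid relation $(\mathrm{b}_2)$: $g_ig_{i+1}g_i=g_{i+1}g_ig_{i+1}$, which I would verify on each $\bv_{\cT}$ by case analysis on the triple $(\pos_i,\pos_{i+1},\pos_{i+2})$. If the three positions are pairwise distinct, every factor uses (\ref{rep-g1}) and both sides collapse to $\bv_{\cT^{s_is_{i+1}s_i}}$, which is standard because the three entries lie in distinct components. If exactly two of the positions coincide, the three subcases $\pos_i=\pos_{i+1}$, $\pos_{i+1}=\pos_{i+2}$, and $\pos_i=\pos_{i+2}$ each combine a ``spectator'' entry swapped by a pure permutation via (\ref{rep-g1}) with a pair lying in the same component subject to the Hecke-type rule (\ref{rep-g2}); tracking carefully how $s_i$ and $s_{i+1}$ act on positions and contents, the identity reduces to a finite-dimensional (at most six-dimensional) matrix equality in the scalars from (\ref{rep-g2}). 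If all three positions are equal, the action preserves the span of the (at most six) $\bv_{\cT^\sigma}$ with $\sigma\in\mathfrak{S}_{\{i,i+1,i+2\}}$, and the formulas coincide with Young's seminormal representation of the Iwahori--Hecke algebra $\mathcal{H}_3(q)$, for which the braid relation is classical (see \cite{IO}). Throughout, non-standard intermediate $d$-tableaux are handled exactly as in the quadratic case: the problematic coefficients either vanish via a content identity or multiply $\bv_{\cS}=0$, so the formulas remain internally consistent.
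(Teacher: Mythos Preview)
Your proposal is correct and follows essentially the same approach as the paper: a direct verification of the defining relations on basis vectors, with the same case analysis on positions for the braid relation $(\mathrm{b}_2)$ and the same appeal to the classical Iwahori--Hecke case when $\pos_i=\pos_{i+1}=\pos_{i+2}$. The only notable difference is that the paper invokes the representation theory of $\widehat{\rm Y}_{d,2}(q)$ from \S\ref{affine_{d,2}} to dispose of the quadratic relation and of $(\mathrm{f}_2)$ for $j\in\{i,i+1\}$ in one stroke, whereas you verify these directly; and the paper carries out the mixed-position braid computations explicitly rather than leaving them as matrix checks.
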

\begin{proof}
We verify that Formulas (\ref{rep-t})--(\ref{rep-g2}) define a representation of the algebra $\C(q){\rm Y}_{d,n}(q)$ by proving that the given actions
satisfy the defining relations (\ref{modular}) and (\ref{quadr}) of ${\rm Y}_{d,n}(q)$. 
During the calculations, we will use several times Formulas (\ref{pos-con}) without mentioning.

Note first that, as the $d$-tableau $\cT$ is standard, then the $d$-tableau $\cT^{s_i}$ is not standard if and only if $\pos_{i+1}=\pos_i$ and $\cc_{i+1}=\cc_iq^{\pm2}$. If $\pos_{i+1}=\pos_i$ and $\cc_{i+1}=\cc_iq^{2\epsilon}$, with $\epsilon=\pm1$, then Formula (\ref{rep-g2}) becomes $g_i(\bv_{_{\cT}})=\epsilon q^{\epsilon}\,\bv_{_{\cT}}\,$.

The study of representations of the affine Yokonuma--Hecke algebra $\C(q)\widehat{{\rm Y}}_{d,2}(q)$ in \S \ref{affine_{d,2}}  implies directly that the relations $g_i^2 = 1 + (q-q^{-1}) \, e_{i} \, g_i$, $g_it_i=t_{i+1}g_i$ and $g_it_{i+1}=t_ig_{i}$ are satisfied.

The relations $t_j^d=1$ and $t_it_j=t_jt_i$, $i,j=1,\dots,n$, are immediately verified. Moreover, as $g_i(\bv_{_{\cT}})$ is a linear combination of $\bv_{_{\cT}}$ and $\bv_{_{\cT^{s_i}}}$, the relations $g_it_j=t_jg_i$, for $i=1,\dots,n-1$ and $j=1,\dots,n$ such that 
$j\neq i,i+1$, are also verified.

So it remains to check the relations 
\begin{equation}\label{br1}
\text{$g_ig_j(\bv_{_{\cT}})=g_jg_i(\bv_{_{\cT}})$ \,for $i,j=1,\dots,n-1$ such that $|i-j|>1$}, 
\end{equation}
and the relations 
\begin{equation}\label{br2}
\text{$g_ig_{i+1}g_i(\bv_{_{\cT}})=g_{i+1}g_ig_{i+1}(\bv_{_{\cT}})$ \,for $i=1,\dots,n-2$.}
\end{equation}
We will first verify (\ref{br1}).

Note now that Formula (\ref{rep-g2}) does not depend on $d$ (since $\pos_i=\pos_{i+1}$) 
and coincides with the usual formula, established in \cite{Hoe}, for the action of the generators of the Iwahori--Hecke algebra of type $A$ on vectors labelled by usual standard tableaux.
Thus, for $i,j=1,\dots,n-1$ such that $|i-j|>1$, if $\pos_{i}=\pos_{i+1}$ and $\pos_{j}=\pos_{j+1}$ then it is well-known that $g_ig_j(\bv_{_{\cT}})=g_jg_i(\bv_{_{\cT}})$ (it can be shown by a straightforward calculation that we do not repeat here).

Assume that $\pos_i\neq\pos_{i+1}$ and $\pos_j\neq\pos_{j+1}$. Then the $d$-tableaux $\cT^{s_i}$, $\cT^{s_j}$, $\cT^{s_is_j}$ and $\cT^{s_js_i}$ are standard, and we have
$\cT^{s_is_j}=\cT^{s_js_i}$ , since $s_is_j=s_js_i$. We deduce that
$g_ig_j(\bv_{_{\cT}})=\bv_{_{\cT^{s_js_i}}}=\bv_{_{\cT^{s_is_j}}}=g_jg_i(\bv_{_{\cT}})$.

Now assume that $\pos_i\neq\pos_{i+1}$ and $\pos_j=\pos_{j+1}$. Then the $d$-tableau $\cT^{s_i}$ is standard and a direct calculation shows that
\[g_ig_j(\bv_{_{\cT}})
=\frac{\cc_{j+1}(q-q^{-1})}{\cc_{j+1}-\cc_j}\,\bv_{_{\cT^{s_i}}}+\frac{q\cc_{j+1}-q^{-1}\cc_j}{\cc_{j+1}-\cc_j}\,g_i(\bv_{_{\cT^{s_j}}})\ \]
and
\[g_jg_i(\bv_{_{\cT}})=\frac{\cc_{j+1}(q-q^{-1})}{\cc_{j+1}-\cc_j}\,\bv_{_{\cT^{s_i}}}+\frac{q\cc_{j+1}-q^{-1}\cc_j}{\cc_{j+1}-\cc_j}\,\bv_{_{\cT^{s_is_j}}}\ .\]
 If the $d$-tableau $\cT^{s_j}$ is standard, then $g_i(\bv_{_{\cT^{s_j}}})=\bv_{_{\cT^{s_js_i}}}$. If $\cT^{s_j}$ is  not standard, then $\cT^{s_js_i}$ is not standard either, and 
 $g_i(\bv_{_{\cT^{s_j}}})=0=\bv_{_{\cT^{s_js_i}}}$.
 In any case,
 since $s_is_j=s_js_i$, we have $g_ig_j(\bv_{_{\cT}})=g_jg_i(\bv_{_{\cT}})$.
We deal similarly with 
the case $\pos_i=\pos_{i+1}$ and $\pos_j\neq\pos_{j+1}$.

We will now prove (\ref{br2}).
Let $i=1,\dots,n-2$. If $\pos_i=\pos_{i+1}=\pos_{i+2}$, then, again from the representation theory of the Iwahori--Hecke algebra of type $A$, it is well-known that $g_ig_{i+1}g_i(\bv_{_{\cT}})=g_{i+1}g_ig_{i+1}(\bv_{_{\cT}})$ (thus, we do not  have to repeat the straightforward, but lengthy, calculation).

Assume that $\pos_i$, $\pos_{i+1}$ and $\pos_{i+2}$ are all different. Then the $d$-tableaux $\cT^{s_i}$, $\cT^{s_is_{i+1}}$, $\cT^{s_is_{i+1}s_i}$, $\cT^{s_{i+1}}$, $\cT^{s_{i+1}s_i}$ and $\cT^{s_{i+1}s_is_{i+1}}$ are all standard, and we have $\cT^{s_is_{i+1}s_i}=\cT^{s_{i+1}s_is_{i+1}}$, since $s_is_{i+1}s_i=s_{i+1}s_is_{i+1}$.
We deduce that
 $g_ig_{i+1}g_i(\bv_{_{\cT}})=\bv_{_{\cT^{s_is_{i+1}s_i}}}=\bv_{_{\cT^{s_{i+1}s_is_{i+1}}}}=g_{i+1}g_ig_{i+1}(\bv_{_{\cT}})$.

Assume that $\pos_i=\pos_{i+1}\neq\pos_{i+2}$. Then the $d$-tableaux $\cT^{s_{i+1}}$ and $\cT^{s_{i+1}s_i}$ are 
standard. We calculate
\[\begin{array}{ll}
g_ig_{i+1}g_i(\bv_{_{\cT}}) & =\  \displaystyle g_ig_{i+1}\Bigl( \frac{\cc_{i+1}(q-q^{-1})}{\cc_{i+1}-\cc_i}\,\bv_{_{\cT}}+\frac{q\cc_{i+1}-q^{-1}\cc_i}{\cc_{i+1}-\cc_i}\,\bv_{_{\cT^{s_i}}}\Bigr)\\[1em]
 & =\ \displaystyle g_i \Bigl( \frac{\cc_{i+1}(q-q^{-1})}{\cc_{i+1}-\cc_i}\,\bv_{_{\cT^{s_{i+1}}}}+\frac{q\cc_{i+1}-q^{-1}\cc_i}{\cc_{i+1}-\cc_i}\,g_{i+1}(\bv_{_{\cT^{s_i}}})\Bigr)\\[1em]
 & =\  \displaystyle\frac{\cc_{i+1}(q-q^{-1})}{\cc_{i+1}-\cc_i}\,\bv_{_{\cT^{s_{i+1}s_i}}}+\frac{q\cc_{i+1}-q^{-1}\cc_i}{\cc_{i+1}-\cc_i}\,g_ig_{i+1}(\bv_{_{\cT^{s_i}}})\ ;\\[0.8em]
\end{array}\]
If $\cT^{s_i}$ is standard, then $\cT^{s_is_{i+1}}$ and $\cT^{s_is_{i+1}s_i}$ are standard, and $g_ig_{i+1}(\bv_{_{\cT^{s_i}}})=\bv_{_{\cT^{s_is_{i+1}s_i}}}$.
If $\cT^{s_i}$ is not standard, then $\cT^{s_is_{i+1}}$ and $\cT^{s_is_{i+1}s_i}$ are not standard either, and $g_ig_{i+1}(\bv_{_{\cT^{s_i}}})=0=\bv_{_{\cT^{s_is_{i+1}s_i}}}$.
In any case, we have
\[\begin{array}{ll}
g_ig_{i+1}g_i(\bv_{_{\cT}}) & =\  \displaystyle\frac{\cc_{i+1}(q-q^{-1})}{\cc_{i+1}-\cc_i}\,\bv_{_{\cT^{s_{i+1}s_i}}}+\frac{q\cc_{i+1}-q^{-1}\cc_i}{\cc_{i+1}-\cc_i}\,\bv_{_{\cT^{s_is_{i+1}s_i}}}\ .\\[0.8em]
\end{array}\]
On the other hand, we directly calculate
\[
g_{i+1}g_{i}g_{i+1}(\bv_{_{\cT}})  
=\  \displaystyle\frac{\cc_{i+1}(q-q^{-1})}{\cc_{i+1}-\cc_i}\,\bv_{_{\cT^{s_{i+1}s_i}}}+\frac{q\cc_{i+1}-q^{-1}\cc_i}{\cc_{i+1}-\cc_i}\,\bv_{_{\cT^{s_{i+1}s_is_{i+1}}}}\ .
\]
As $s_is_{i+1}s_i=s_{i+1}s_is_{i+1}$, we obtain that $g_ig_{i+1}g_i(\bv_{_{\cT}})=g_{i+1}g_ig_{i+1}(\bv_{_{\cT}})$.

The case $\pos_i\neq \pos_{i+1}=\pos_{i+2}$ is similar to the above, so we skip the details.

Finally, assume that $\pos_i=\pos_{i+2}\neq\pos_{i+1}$. Then the $d$-tableaux $\cT^{s_i}$ and $\cT^{s_{i+1}}$ are 
standard. We calculate
\[\begin{array}{ll}
g_ig_{i+1}g_i(\bv_{_{\cT}}) & =\  \displaystyle g_ig_{i+1}\bigl(\bv_{_{\cT^{s_i}}}\bigr)\\[0.6em]
 & =\ \displaystyle g_i \Bigl( \frac{\cc_{i+2}(q-q^{-1})}{\cc_{i+2}-\cc_i}\,\bv_{_{\cT^{s_{i}}}}+\frac{q\cc_{i+2}-q^{-1}\cc_i}{\cc_{i+2}-\cc_i}\,\bv_{_{\cT^{s_is_{i+1}}}}\Bigr)\\[1.2em]
 & =\  \displaystyle\frac{\cc_{i+2}(q-q^{-1})}{\cc_{i+2}-\cc_i}\,\bv_{_{\cT}}+\frac{q\cc_{i+2}-q^{-1}\cc_i}{\cc_{i+2}-\cc_i}\,g_i(\bv_{_{\cT^{s_is_{i+1}}}})
\end{array}\]
and
\[\begin{array}{ll}
g_{i+1}g_{i}g_{i+1}(\bv_{_{\cT}}) & = \ \displaystyle g_{i+1}g_{i}\bigl(\bv_{_{\cT^{s_{i+1}}}}\bigr)\\[0.6em]
 & =\ \displaystyle g_{i+1} \Bigl( \frac{\cc_{i+2}(q-q^{-1})}{\cc_{i+2}-\cc_i}\,\bv_{_{\cT^{s_{i+1}}}}+\frac{q\cc_{i+2}-q^{-1}\cc_i}{\cc_{i+2}-\cc_i}\,\bv_{_{\cT^{s_{i+1}s_{i}}}}\Bigr)\\[1.2em]
 & 
 =\  \displaystyle\frac{\cc_{i+2}(q-q^{-1})}{\cc_{i+2}-\cc_i}\,\bv_{_{\cT}}+\frac{q\cc_{i+2}-q^{-1}\cc_i}{\cc_{i+2}-\cc_i}\,g_{i+1}(\bv_{_{\cT^{s_{i+1}s_i}}})\ .\\[0.8em]
\end{array}
\]
If $\cT^{s_is_{i+1}}$ is standard, then $g_i(\bv_{_{\cT^{s_is_{i+1}}}})=\bv_{_{\cT^{s_is_{i+1}s_i}}}$.
If  $\cT^{s_is_{i+1}}$ is not standard, then $\cT^{s_is_{i+1}s_i}$ is not standard either, and $g_i(\bv_{_{\cT^{s_is_{i+1}}}})=0=\bv_{_{\cT^{s_is_{i+1}s_i}}}$.
Similarly, if $\cT^{s_{i+1}s_{i}}$ is standard, then $g_{i+1}(\bv_{_{\cT^{s_{i+1}s_i}}})=\bv_{_{\cT^{s_{i+1}s_is_{i+1}}}}$.
If $\cT^{s_{i+1}s_{i}}$ is not standard, then $\cT^{s_{i+1}s_is_{i+1}}$ is not standard either, and $g_{i+1}(\bv_{_{\cT^{s_{i+1}s_i}}})=0=\bv_{_{\cT^{s_{i+1}s_is_{i+1}}}}$.
 As $s_is_{i+1}s_i=s_{i+1}s_is_{i+1}$, we obtain that $g_ig_{i+1}g_i(\bv_{_{\cT}})=g_{i+1}g_ig_{i+1}(\bv_{_{\cT}})$.
\end{proof}

\begin{prop}\label{J-rep}
Let $\blambda$ be a $d$-partition of $n$. The action of the Jucys--Murphy elements $J_1,\dots,J_n$ of the algebra $\C(q){\rm Y}_{d,n}(q)$ on the vector space $V_{\blambda}$ is given by
\begin{equation}\label{action-JM}
J_i(\bv_{_{\cT}})=\cc(\cT|i) \,\bv_{_{\cT}}\ \quad\text{{ for any standard $d$-tableau $\cT$ of shape $\blambda$.}}
\end{equation}
\end{prop}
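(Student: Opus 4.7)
The plan is to proceed by induction on $i$, using the inductive definition $J_{i+1} = g_i J_i g_i$ with base case $J_1 = 1$. For the base case, any standard $d$-tableau places the entry $1$ at a $(1,1)$-corner of one of its component diagrams, so $\cc(\cT|1) = q^{2(1-1)} = 1$, matching the action of the identity $J_1$.

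For the inductive step, assume $J_i(\bv_{_{\cS}}) = \cc(\cS|i)\bv_{_{\cS}}$ for every standard $d$-tableau $\cS$ of shape $\blambda$, and compute $J_{i+1}(\bv_{_{\cT}}) = g_i J_i g_i(\bv_{_{\cT}})$. Abbreviating $\pos_j := \pos(\cT|j)$ and $\cc_j := \cc(\cT|j)$, split into three cases following Proposition \ref{prop-rep}. If $\pos_i \neq \pos_{i+1}$, then $\cT^{s_i}$ is standard and $g_i(\bv_{_{\cT}}) = \bv_{_{\cT^{s_i}}}$; by (\ref{pos-con}) the induction hypothesis gives $J_i(\bv_{_{\cT^{s_i}}}) = \cc_{i+1}\bv_{_{\cT^{s_i}}}$, and a second application of $g_i$ (still in the unequal-positions regime) returns $\bv_{_{\cT}}$, yielding $J_{i+1}(\bv_{_{\cT}}) = \cc_{i+1}\bv_{_{\cT}}$. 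If $\pos_i = \pos_{i+1}$ and $\cc_{i+1} = \cc_i q^{2\epsilon}$ for some $\epsilon \in \{\pm 1\}$, then $\cT^{s_i}$ is non-standard so $\bv_{_{\cT^{s_i}}} = 0$, the formula (\ref{rep-g2}) collapses to $g_i(\bv_{_{\cT}}) = \epsilon q^{\epsilon} \bv_{_{\cT}}$, and a scalar computation gives $J_{i+1}(\bv_{_{\cT}}) = (\epsilon q^{\epsilon})^2 \cc_i \bv_{_{\cT}} = q^{2\epsilon} \cc_i \bv_{_{\cT}} = \cc_{i+1} \bv_{_{\cT}}$.

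The remaining case is $\pos_i = \pos_{i+1}$ with $\cc_{i+1} \notin \{\cc_i q^{\pm 2}\}$, where $\cT^{s_i}$ is standard. Writing $x := \cc_i$ and $y := \cc_{i+1}$, the formula (\ref{rep-g2}) yields $g_i(\bv_{_{\cT}}) = A\,\bv_{_{\cT}} + B\,\bv_{_{\cT^{s_i}}}$ with $A = y(q-q^{-1})/(y-x)$ and $B = (qy-q^{-1}x)/(y-x)$, while applying the same formula to $\cT^{s_i}$ (which has $\cc_i \leftrightarrow \cc_{i+1}$ swapped by (\ref{pos-con})) gives $g_i(\bv_{_{\cT^{s_i}}}) = A'\,\bv_{_{\cT^{s_i}}} + B'\,\bv_{_{\cT}}$ with $A' = x(q-q^{-1})/(x-y)$ and $B' = (qx-q^{-1}y)/(x-y)$. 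The induction hypothesis gives $J_i$-eigenvalues $x$ on $\bv_{_{\cT}}$ and $y$ on $\bv_{_{\cT^{s_i}}}$. Expanding $g_i J_i g_i(\bv_{_{\cT}})$, the coefficient of $\bv_{_{\cT^{s_i}}}$ factors as $B(Ax + A'y)$, which vanishes because $Ax + A'y = (q-q^{-1})(xy-xy)/(y-x) = 0$. The coefficient of $\bv_{_{\cT}}$ is $A^2 x + B B' y$, which must be shown to equal $y$.

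The main obstacle is this last algebraic identity. Expanding $(qy-q^{-1}x)(q^{-1}y-qx) = x^2 + y^2 - (q^2+q^{-2})xy$ and $(q-q^{-1})^2 = q^2 + q^{-2} - 2$, the numerator $y^2(q-q^{-1})^2 x + y\bigl[x^2 + y^2 - (q^2+q^{-2})xy\bigr]$ collapses, after cancellation of the $(q^2+q^{-2})xy^2$ terms, to $y(x-y)^2$, exactly matching the common denominator $(y-x)^2$. Hence $A^2 x + BB'y = y$ and $J_{i+1}(\bv_{_{\cT}}) = y\,\bv_{_{\cT}} = \cc(\cT|i+1)\,\bv_{_{\cT}}$, completing the induction.
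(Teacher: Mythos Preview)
Your proof is correct and follows essentially the same inductive strategy as the paper's own argument: induction on $i$ via $J_{i+1}=g_iJ_ig_i$, splitting according to whether $\pos_i=\pos_{i+1}$, and verifying the algebraic identity that the $\bv_{_{\cT^{s_i}}}$-coefficient vanishes while the $\bv_{_{\cT}}$-coefficient equals $\cc_{i+1}$. The only cosmetic difference is that you treat the adjacent case $\cc_{i+1}=\cc_iq^{\pm2}$ separately as a scalar computation, whereas the paper carries out the general expansion once and then remarks that it remains valid (with the last term vanishing) when $\cT^{s_i}$ is not standard.
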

\begin{proof} Let $\cT$ be a standard $d$-tableau of shape $\blambda$, and set $\pos_i:=\pos(\cT|i)$ and $\cc_i:=\cc(\cT|i)$ for $i=1,\dots,n$. We will prove (\ref{action-JM}) by induction on $i$. Recall that $J_{i+1}=g_iJ_ig_i$ for all $i=1,\dots,n-1$.

For $i=1$, (\ref{action-JM}) is satisfied, since $J_1=1$ and $\cc_1=1$.
Let $i>1$, and assume that (\ref{action-JM}) holds for smaller values of $i$.

First suppose that
 $\pos_{i}\neq\pos_{i+1}$. Then we have, from (\ref{rep-g1}), that
\[J_{i+1}(\bv_{_{\cT}})=g_iJ_ig_i(\bv_{_{\cT}})=g_iJ_i(\bv_{_{\cT^{s_i}}})= g_i(\cc_{i+1}\,\bv_{_{\cT^{s_i}}})=\cc_{i+1}\,g_i(\bv_{_{\cT^{s_i}}})=\cc_{i+1}\,\bv_{_{\cT}}\ ,\]
where we have used the induction hypothesis together with the fact that $\cc(\cT^{s_i}|i)=\cc_{i+1}$.

Now suppose that $\pos_{i}=\pos_{i+1}$. Then we have, from (\ref{rep-g2}), that
\[\begin{array}{ll}J_{i+1}(\bv_{_{\cT}}) & =g_iJ_ig_i(\bv_{_{\cT}})\\[0.4em]
 & =\displaystyle g_iJ_i\Bigl(\frac{\cc_{i+1}(q-q^{-1})}{\cc_{i+1}-\cc_i}\,\bv_{_{\cT}}+\frac{q\cc_{i+1}-q^{-1}\cc_i}{\cc_{i+1}-\cc_i}\,\bv_{_{\cT^{s_i}}}\Bigr)\\[1.2em]
 & =\displaystyle g_i\Bigl(\frac{\cc_i\cc_{i+1}(q-q^{-1})}{\cc_{i+1}-\cc_i}\,\bv_{_{\cT}}+\frac{\cc_{i+1}(q\cc_{i+1}-q^{-1}\cc_i)}{\cc_{i+1}-\cc_i}\,\bv_{_{\cT^{s_i}}}\Bigr)\\[1.4em]
& =\displaystyle \frac{\cc_i\cc_{i+1}(q-q^{-1})}{\cc_{i+1}-\cc_i}\Bigl(\frac{\cc_{i+1}(q-q^{-1})}{\cc_{i+1}-\cc_i}\,\bv_{_{\cT}}+\frac{q\cc_{i+1}-q^{-1}\cc_i}{\cc_{i+1}-\cc_i}\,\bv_{_{\cT^{s_i}}}\Bigr)\\[1em]
 & \ \ +\ \  \displaystyle \frac{\cc_{i+1}(q\cc_{i+1}-q^{-1}\cc_i)}{\cc_{i+1}-\cc_i}\Bigl(\frac{\cc_{i}(q-q^{-1})}{\cc_{i}-\cc_{i+1}}\,\bv_{_{\cT^{s_i}}}+\frac{q\cc_{i}-q^{-1}\cc_{i+1}}{\cc_{i}-\cc_{i+1}}\,\bv_{_{\cT}}\Bigr)\ ,\\
 \end{array}
\]
where we have used the induction hypothesis together with the fact that $\cc(\cT^{s_i}|i)=\cc_{i+1}$ and $\cc(\cT^{s_i}|i+1)=\cc_{i}$. First, we observe that the coefficient in front of $\bv_{_{\cT^{s_i}}}$ is equal to $0$. Second, we note that if $\cc_{i+1}=\cc_iq^{\pm2}$, then the last term of the sum is equal to 0. Thus, if the $d$-tableau $\cT^{s_i}$ is not standard (that is, if $\cc_{i+1}=\cc_iq^{\pm2}$), the calculation above is still valid. A direct calculation shows that $J_{i+1}(\bv_{_{\cT}})=\cc_{i+1}\,\bv_{_{\cT}}$.
\end{proof}

Let $\cT \in \mathrm{STab}_d(n)$ and let $\Lambda$ be the corresponding content array, through the identification of $\mathrm{STab}_d(n)$ with $\mathrm{Cont}_d(n)$ given by Proposition \ref{cont-tab}. Following the action of the Jucys--Murphy elements $t_1,\ldots,t_n,J_1,\dots,J_n$ on $\bv_{_{\cT}}$, given by Propositions \ref{prop-rep} and \ref{J-rep}, we obtain that 
$\Lambda \in \spec$, with corresponding vector $\bv_{_{\cT}}$.
Thus, we obtain an inclusion of the set $\mathrm{STab}_d(n)$  into the spectrum $\spec$ of the Jucys--Murphy elements
$t_1,\ldots,t_n,J_1,\dots,J_n$. On the other hand, Propositions \ref{spec-cont} and \ref{cont-tab} provide an inclusion of the set $\spec$ into $\mathrm{STab}_d(n)$. These operations, by construction, are inverse to each other. We sum up the results.

\begin{prop} \label{spec-tab}
The set $\mathrm{STab}_d(n)$ is in bijection with the set $\spec$.
\end{prop}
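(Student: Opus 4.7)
The plan is to combine the constructions of Sections 4 and 5 into a single bijection. On one side, Proposition \ref{spec-cont} together with Proposition \ref{cont-tab} gives a map $\Phi\colon \spec \to \mathrm{STab}_d(n)$ as the composition $\spec \hookrightarrow \cont \xrightarrow{\sim} \mathrm{STab}_d(n)$. On the other side, I want to construct an explicit map $\Psi\colon \mathrm{STab}_d(n) \to \spec$ using the representations $V_{\blambda}$ built in Proposition \ref{prop-rep}, and then check that $\Phi$ and $\Psi$ are mutually inverse.

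To define $\Psi$, I start from a standard $d$-tableau $\cT$ of shape $\blambda$, and consider the basis vector $\bv_{_{\cT}} \in V_{\blambda}$. By Formula (\ref{rep-t}) and Proposition \ref{J-rep}, the simultaneous eigenvalues are
$$t_j(\bv_{_{\cT}}) = \xi_{\pos(\cT|j)}\bv_{_{\cT}}, \qquad J_i(\bv_{_{\cT}}) = \cc(\cT|i)\bv_{_{\cT}},$$
so the associated $2\times n$ array is precisely $f(\cT)$, with $f$ the map from the proof of Proposition \ref{cont-tab}. Thus, provided $V_{\blambda}$ meets the two conditions (i) and (ii) in the definition of $\spec$, setting $\Psi(\cT) := f(\cT)$ gives the desired element of $\spec$, and by construction $\Phi \circ \Psi = \mathrm{id}_{\mathrm{STab}_d(n)}$.

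The main step is therefore to verify conditions (i) and (ii) for $V_{\blambda}$. Condition (i) is immediate: in the basis $\{\bv_{_{\cT}}\}$ the generators $t_j$ act diagonally by (\ref{rep-t}) and the $J_i$ act diagonally by Proposition \ref{J-rep}. For condition (ii), I fix $i$ and examine the subalgebra $\mathcal{A}_i$ generated by $t_i$, $t_{i+1}$, $J_i$, $J_{i+1}$, $g_i$. By Formulas (\ref{rep-g1})--(\ref{rep-g2}) together with (\ref{rep-t}), the span of $\{\bv_{_{\cT}},\bv_{_{\cT^{s_i}}}\}$ is $\mathcal{A}_i$-stable (with $\bv_{_{\cT^{s_i}}}=0$ when $\cT^{s_i}$ is not standard). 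This decomposes $V_{\blambda}$, as an $\mathcal{A}_i$-module, into a direct sum of such two-dimensional or one-dimensional subspaces. In each summand the action matches exactly one of the three families of representations classified in \S\ref{affine_{d,2}}: a one-dimensional representation when $\pos_i=\pos_{i+1}$ and $\cc_{i+1}=\cc_i q^{\pm 2}$ (i.e., $\cT^{s_i}$ is non-standard), a two-dimensional representation of type (2) when $\pos_i=\pos_{i+1}$ with $\cc_{i+1}\neq \cc_i q^{\pm 2}$, and a two-dimensional representation of type (3) when $\pos_i\neq \pos_{i+1}$. In all cases the representation is irreducible (for type (2) one checks $\m \neq \l q^{\pm 2}$ using that $\cT$ is standard), hence $V_{\blambda}$ is completely reducible over $\mathcal{A}_i$.

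Finally, the composition $\Psi\circ\Phi = \mathrm{id}_{\spec}$ holds as well: starting from $\Lambda \in \spec$, the construction in the proof of Proposition \ref{cont-tab} builds a unique standard $d$-tableau $\cT$ with $f(\cT)=\Lambda$, and then $\Psi(\cT)$ returns to the array $\Lambda$. Hence $\Phi$ and $\Psi$ are inverse bijections, and $\mathrm{STab}_d(n) \leftrightarrow \spec$. The only genuine obstacle in this plan is verifying condition (ii); everything else is a matter of transporting known information through bijections already established.
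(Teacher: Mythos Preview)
Your proof is correct and follows the same approach as the paper: use the representations $V_{\blambda}$ (Propositions \ref{prop-rep} and \ref{J-rep}) to get an inclusion $\mathrm{STab}_d(n)\hookrightarrow\spec$, combine Propositions \ref{spec-cont} and \ref{cont-tab} for the reverse inclusion, and observe that these are mutually inverse via the map $f$. You are in fact more careful than the paper in explicitly verifying that $V_{\blambda}$ satisfies conditions (i) and (ii) in the definition of $\spec$---the paper simply asserts $\Lambda\in\spec$ without checking (ii)---and your decomposition of $V_{\blambda}$ into $\mathcal{A}_i$-stable one- or two-dimensional pieces matching the classification of \S\ref{affine_{d,2}} is exactly the right way to do it.
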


It remains to show that we have now constructed all irreducible representations for $\C(q){\rm Y}_{d,n}(q)$.

\begin{thm}\label{comp}
For any $\blambda  \in \mathcal{P}(d,n)$, let $V_{\blambda}$ denote the representation of $\C(q){\rm Y}_{d,n}(q)$ constructed in Proposition $\ref{prop-rep}$. Then
\begin{enumerate}[{\rm (a)}]
\item If $V_{\blambda}$ is isomorphic to $V_{\bmu}$ for some $\bmu  \in \mathcal{P}(d,n)$, then $\blambda=\bmu$. \smallbreak
\item The representation $V_{\blambda}$   is irreducible. \smallbreak
\item The set $\{V_{\blambda}\,|\, \blambda  \in \mathcal{P}(d,n)\}$ is a complete set of pairwise non-isomorphic irreducible representations of $\C(q){\rm Y}_{d,n}(q)$.
\end{enumerate}
\end{thm}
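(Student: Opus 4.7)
The plan hinges on the fact that, on $V_{\blambda}$, the Jucys--Murphy elements $t_1,\dots,t_n,J_1,\dots,J_n$ act simultaneously diagonally in the basis $\{\bv_{\cT}\}$ with joint eigenvalue on $\bv_{\cT}$ equal to the content array $f(\cT)$ of Proposition \ref{cont-tab} (by the formula for the $t_i$ in Proposition \ref{prop-rep} and Proposition \ref{J-rep}). Since $f$ is injective, distinct standard $d$-tableaux give distinct joint eigenvalues, so each joint eigenspace of these operators in $V_{\blambda}$ is one-dimensional.

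For (a), the joint spectrum of the Jucys--Murphy elements is an isomorphism invariant, so $V_{\blambda} \cong V_{\blambda'}$ forces $f(\text{standard $d$-tableaux of shape }\blambda) = f(\text{standard $d$-tableaux of shape }\blambda')$. As $f$ is a bijection on all of $\mathrm{STab}_d(n)$, this means the two sets of standard $d$-tableaux coincide; since any standard $d$-tableau determines its shape, we conclude $\blambda=\blambda'$.

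For (b), let $W\subseteq V_{\blambda}$ be a non-zero subrepresentation. Stability under the commuting diagonalisable family $\{t_i,J_i\}$ with one-dimensional joint eigenspaces forces $W=\bigoplus_{\cT\in\mathcal{S}}\C(q)\bv_{\cT}$ for some non-empty set $\mathcal{S}$ of standard $d$-tableaux of shape $\blambda$. Using formulas (\ref{rep-g1})--(\ref{rep-g2}), I would check that, whenever $\cT\in\mathcal{S}$ and $\cT^{s_i}$ is standard, the vector $g_i(\bv_{\cT})\in W$ has a non-zero $\bv_{\cT^{s_i}}$-component: the coefficient is $1$ if $\pos_i\neq\pos_{i+1}$, and equals $(q\cc_{i+1}-q^{-1}\cc_i)/(\cc_{i+1}-\cc_i)$ otherwise; standardness of $\cT^{s_i}$ excludes $\cc_{i+1}\in\{\cc_i,q^{-2}\cc_i\}$, so this coefficient is non-zero. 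Since $W$ is a sum of joint eigenspaces of the Jucys--Murphy elements, it follows that $\bv_{\cT^{s_i}}\in W$, i.e.~$\cT^{s_i}\in\mathcal{S}$. The argument is then closed by the combinatorial fact that any two standard $d$-tableaux of the same shape $\blambda$ are connected by a sequence of transpositions $\cT\mapsto\cT^{s_i}$ whose intermediate terms remain standard. This connectivity is the main obstacle of the proof: for $d=1$ it is classical (see~\cite{OV}), and for general $d$ I would establish it by induction on $n$, first using standardness-preserving swaps to move the entry $n$ to any chosen removable $d$-node of $\blambda$, then applying the inductive hypothesis to the size-$(n-1)$ tableau obtained by deletion.

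For (c), parts (a) and (b) show that $\{V_{\blambda}\,|\,\blambda\in\mathcal{P}(d,n)\}$ is a set of pairwise non-isomorphic irreducible representations of $\C(q){\rm Y}_{d,n}(q)$ with $\dim V_{\blambda}$ equal to the number of standard $d$-tableaux of shape $\blambda$. A dimension count finishes the proof:
\[
\sum_{\blambda\in\mathcal{P}(d,n)}(\dim V_{\blambda})^2\ =\ d^n n!\ =\ \dim_{\C(q)}\C(q){\rm Y}_{d,n}(q),
\]
where the first equality is the decomposition of the regular representation of the complex reflection group $G(d,1,n)\cong(\Z/d\Z)\wr\mathfrak{S}_n$ (whose irreducible complex representations are indexed by the $d$-partitions of $n$ with the same dimensions) and the second equality comes from the basis (\ref{split}). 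By Artin--Wedderburn, the $V_{\blambda}$'s therefore exhaust the irreducible representations of $\C(q){\rm Y}_{d,n}(q)$, which is in passing semisimple.
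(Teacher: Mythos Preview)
Your proof is correct and follows essentially the same strategy as the paper: parts~(a) and~(c) coincide with the paper's arguments (Jucys--Murphy spectrum for~(a), dimension count via $\sum(\dim V_{\blambda})^2=d^nn!$ for~(c)). For~(b), the paper organises the same core computation slightly differently---it uses induction on $n$ through the branching rule ${\rm Res}(V_{\blambda})=\bigoplus_{\btheta\in\mathcal{E}_-(\blambda)}V_{\blambda\setminus\{\btheta\}}$ rather than your explicit connectivity of standard $d$-tableaux---but both arguments reduce to the identical observation that the coefficient of $\bv_{\cT^{s_i}}$ in $g_i(\bv_\cT)$ is non-zero whenever $\cT^{s_i}$ is standard. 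Your connectivity sketch is valid: since a removable $d$-node has no cell to its right or below, the entry $j$ it contains is never adjacent to the entry $j{+}1$, so the swap $s_j$ preserves standardness and one may increment the entry in that node up to $n$, then recurse.
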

\begin{proof}
Part (a) derives from Proposition \ref{spec-tab}, given the action of the Jucys--Murphy elements on   $V_{\blambda}$.
For (b), we will proceed by induction on $n$.

If $n=1$, then $\C(q){\rm Y}_{d,1}(q)$ is isomorphic to the group algebra over $\C(q)$ of the cyclic group of order $d$, 
and its irreducible representations are the ones described by Proposition $\ref{prop-rep}$. 
Now assume that (b)  
holds for $\C(q){\rm Y}_{d,n-1}(q)$. 
Let $\blambda$ be a $d$-partition of $n$. We will show that  $V_{\blambda}$   is an irreducible representation of $\C(q){\rm Y}_{d,n}(q)$.

Recall that we denote by ${\mathcal{E}}_-(\blambda)$ the set of all removable $d$-nodes from $\blambda$.
Following 
the description of the action of the generators $t_1,\ldots,t_{n-1},\,g_1,\ldots,g_{n-2}$ on $V_{\blambda}$, we must have
\begin{equation}\label{BR}{\rm Res}_{\C(q){\rm Y}_{d,n-1}(q)}^{\C(q){\rm Y}_{d,n}(q)} \,(V_{\blambda}) = \bigoplus_{\btheta \in {\mathcal{E}}_-(\blambda)}  V_{\blambda \setminus \{\btheta\}}.\end{equation}
Suppose now  that there exists a proper submodule $M$ of $V_{\blambda}$ such that $M \neq\{0\}$. 
Then,
by induction hypothesis, there exists a non-empty subset $\mathcal{E}_-(M)$ of ${\mathcal{E}}_-(\blambda)$ such that
$${\rm Res}_{\C(q){\rm Y}_{d,n-1}(q)}^{\C(q){\rm Y}_{d,n}(q)} \,(M) = \bigoplus_{\btheta \in {\mathcal{E}}_-(M)}  V_{\blambda \setminus \{\btheta\}}.$$
Thus, if $\cT$ is a standard $d$-tableau of shape $\blambda$ such that the entry $n$ is contained in a $d$-node $\btheta \in   \mathcal{E}_-(M)$, then
${\bf v}_{_{\cT}} \in M$.  Moreover, since $M$ is a proper submodule of $V_{\blambda}$, there exists at least one $d$-node $\btheta'\in{\mathcal{E}}_-(\blambda)$ such that $\btheta'\notin{\mathcal{E}}_-(M)$. Let $\cT'$ be a standard $d$-tableau of shape $\blambda$ such that the number $n$ is in the $d$-node $\btheta'$. Then ${\bf v}_{_{\cT'}} \notin M$. 
Let $\sigma \in \mathfrak{S}_n$ be the permutation such that $\cT^\sigma=\cT'$. If $\sigma=s_{i_1}s_{i_2}\ldots s_{i_r}$, where $s_i$ is the transposition $(i,i+1)$, then there exists $k \in \{0,1,\ldots,r-1\}$ such that 
${\bf v}_{\cT^{s_{i_1}s_{i_2}\ldots s_{i_k}}} \in M$ and ${\bf v}_{\cT^{s_{i_1}s_{i_2}\ldots s_{i_{k+1}}}} \notin M$. We can thus replace $\cT, \cT'$ by
$\cT^{s_{i_1}s_{i_2}\ldots s_{i_k}},\cT^{s_{i_1}s_{i_2}\ldots s_{i_{k+1}}}$, and, by setting $i:=i_{k+1}$, we have that ${\bf v}_{_{\cT}} \in M$ and $\bv_{_{\cT^{s_i}}} \notin M$. Following the definition of the action of $\C(q){\rm Y}_{d,n}(q)$ on $M$ given by Proposition \ref{prop-rep}, this is only possible if
$$\pos(\cT|i)=\pos(\cT|i+1)\,\,\,\,\,\text{and}\,\,\,\,\,\cc(\cT|i)=q^2\,\cc(\cT|i+1).$$
However, in this case, as we have also already seen in the proof of Proposition \ref{prop-rep}, $\cT^{s_i}$ is not a standard $d$-tableau,
and so $\bv_{_{\cT^{s_i}}} = 0 \in M$,
a contradiction. We conclude that $V_{\blambda}$   is irreducible.

Finally, part (c)  follows by counting dimensions, since, from the Robinson-Schensted correspondence, we have 
\begin{equation}\label{Robinson-Schensted}
 \sum_{\blambda  \in \mathcal{P}(d,n)} \left({\rm dim}_{\C(q)} (V_{\blambda})\right)^2 = d^n n! = {\rm dim}_{\C(q)} (\C(q){\rm Y}_{d,n}(q)).
 \end{equation}\end{proof}

\begin{rem}\label{rem-BR} {\rm
Formula (\ref{BR}) gives the branching rules for the irreducible representations of the algebra $\C(q){\rm Y}_{d,n}(q)$ with respect to its subalgebra $\C(q){\rm Y}_{d,n-1}(q)$. These rules are simply expressed in the combinatorial terms of $d$-partitions, and can be equivalently stated by saying that the Bratteli diagram of the chain of  algebras $\C(q){\rm Y}_{d,n}(q)$ coincides with the Hasse diagram of the poset of $d$-partitions (ordered by inclusion). In particular, we have shown that the branching rules are ``multiplicity-free''; 
as it is well-known, this implies that the centraliser of $\C(q){\rm Y}_{d,n-1}(q)$ in $\C(q){\rm Y}_{d,n}(q)$ is commutative.
}\end{rem}

\subsection{Primitive idempotents and maximal commutative subalgebra of $\C(q){\rm Y}_{d,n}(q)$}\label{ET}

Following the construction of its irreducible representations, the algebra $\C(q){\rm Y}_{d,n}(q)$ is split. Moreover, Equation (\ref{Robinson-Schensted}) implies that the algebra $\C(q){\rm Y}_{d,n}(q)$ is also semisimple.

\begin{rem}{\rm
One may observe that ${\rm Y}_{d,n}(q)$ is split over a much smaller field, $\mathbb{Q}(\zeta,q)$, where $\zeta:={\rm exp}(2\pi i/d)$. Then all  constructions in this paper can be repeated by replacing $\C(q)$ with $\mathbb{Q}(\zeta,q)$.
}\end{rem}

For any $d$-partition $\blambda$ of $n$, denote by $m_{\blambda}$ the dimension of the representation $V_{\blambda}$. 
We fix the basis $\{\bv_{_{\cT}}\}$ of $V_{\blambda}$ used in Proposition \ref{prop-rep} and use it to identify $\text{End}_{\C(q)}(V_{\blambda})$ with the matrix algebra ${\rm Mat}_{m_{\blambda}}(\C(q))$ over  $\C(q)$. Since $\C(q){\rm Y}_{d,n}(q)$ is split semisimple, it follows from Theorem \ref{comp}(c) and the Artin--Wedderburn theorem that there exists an isomorphism
\begin{equation}\label{WA}I: \C(q){\rm Y}_{d,n}(q) \rightarrow \prod_{\blambda \in   \mathcal{P}(d,n)} {\rm Mat}_{m_{\blambda}}(\C(q)).\end{equation}
We write $I_{\blambda}$ for the projection of $I$ onto the $\blambda$-factor, that is,
$$I_{\blambda}: \C(q){\rm Y}_{d,n}(q) \twoheadrightarrow {\rm Mat}_{m_{\blambda}}(\C(q)).$$
Let $\cT$ be a standard $d$-tableau of shape $\blambda$. Since $I$ is an isomorphism, there exists a unique element
 $E_{_{\cT}}$  of $\C(q){\rm Y}_{d,n}(q)$ that satisfies:
$$I_{\bmu}(E_{_{\cT}})=
\left\{ \begin{array}{ll}
0 & \text{ if } \blambda \neq \bmu \,;\\
P_{\bv_{_{\cT}}}& \text{ if } \blambda = \bmu\,,
\end{array}\right.$$
where 
$P_{\bv_{_{\cT}}}$ stands for the projection onto $\C(q)\bv_{_{\cT}}$, that is, $P_{\bv_{_{\cT}}}$ is 
the diagonal $m_{\blambda} \times m_{\blambda}$ matrix with coefficient $1$ in the column labelled by $\bv_{_{\cT}}$, and $0$ everywhere else.
The set $\{P_{\bv_{_{\cT}}}\}$, where $\bv_{_{\cT}}$ runs over the basis vectors of $V_{\blambda}$, is a complete set of pairwise orthogonal primitive idempotents of ${\rm Mat}_{m_{\blambda}}(\C(q))$. 
Thus, the element  $E_{_{\cT}}$ is a primitive idempotent of $\C(q){\rm Y}_{d,n}(q)$, and the set $\{ E_{_{\cT}}\}_{\cT \in  \mathrm{STab}_d(n)}$ is a complete set of pairwise orthogonal primitive idempotents of $\C(q){\rm Y}_{d,n}(q)$.

Now, the Jucys--Murphy elements $t_1,\dots,t_n,J_1,\dots,J_n$ are represented by diagonal matrices in the basis $\{\bv_{_{\cT}}\}$ of $V_{\blambda}$ indexed by the standard $d$-tableaux of shape $\blambda$ (see Formulas (\ref{rep-t}) and (\ref{action-JM})). Moreover, the eigenvalues of the set $\{t_1,\dots,t_n,J_1,\dots,J_n\}$ allow to distinguish between all basis vectors $\bv_{_{\cT}}$ of all representations $V_{\blambda}$, with $\blambda \in \mathcal{P}(d,n)$ (Proposition \ref{spec-tab}). Thus, for $\cT$ a standard $d$-tableau of size $n$, we can express the primitive idempotent $E_{_{\cT}}$ of $\C(q){\rm Y}_{d,n}(q)$
in terms of the Jucys--Murphy elements $t_1,\dots,t_n$, $J_1,\dots,J_n$, as follows:

Let $\btheta$ be the $d$-node of $\cT$ with the number $n$ in it. As the tableau $\cT$ is standard, the $d$-node $\btheta$ is removable. Let ${\mathcal{U}}$ be the standard $d$-tableau obtained from $\cT$ by removing the $d$-node $\btheta$ and let $\bmu$ be the shape of ${\mathcal{U}}$.
The inductive formula for $E_{_{\cT}}$ in terms of the Jucys--Murphy elements reads:
\begin{equation}\label{idem-JM}
E_{_{\cT}}=E_{_{{\mathcal{U}}}}\prod_{ \begin{array}{l}\scriptstyle{\btheta'\in 
{\mathcal{E}}_+(\bmu)}\\\scriptstyle{\cc(\btheta')\neq \cc(\btheta)}\end{array}}\hspace{-0.2cm} \frac{J_n-\cc(\btheta')}{\cc(\btheta)-\cc(\btheta')}\prod_{ \begin{array}{l}\scriptstyle{\btheta'\in 
{\mathcal{E}}_+(\bmu)}\\\scriptstyle{\pos(\btheta')\neq \pos(\btheta)}\end{array}  }\hspace{-0.2cm}\frac{t_n-\xi_{\pos(\btheta')}}{\xi_{\pos(\btheta)}-\xi_{\pos(\btheta')}}\ ,
\end{equation}
with $E_{_{\cT_0}}=1$ for the unique $d$-tableau $\cT_0$ of size $0$. Note that, due to the commutativity of the Jucys--Murphy elements, all terms in the above formula commute with each other.

In Formula (\ref{idem-JM}), we consider the idempotent $E_{_{{\mathcal{U}}}}$ of $\C(q){\rm Y}_{d,n-1}(q)$ as an element of $\C(q){\rm Y}_{d,n}(q)$ thanks to the chain property (\ref{chain}) of the algebras ${\rm Y}_{d,n}(q)$. In fact,
seeing $E_{_{{\mathcal{U}}}}$ as an element of $\C(q){\rm Y}_{d,n}(q)$, we have
\begin{equation}\label{EU-ET}E_{_{{\mathcal{U}}}} = \sum_{ \begin{array}{c}\scriptstyle{\bpsi \in 
{\mathcal{E}}_+(\bmu)}\end{array}}\hspace{-0.2cm} 
E_{_{\mathcal{U} \cup \{\bpsi\}}}\,\,\,,\end{equation}
where, for any $\bpsi \in {\mathcal{E}}_+(\bmu)$, $\mathcal{U} \cup \{\bpsi\}$ is the standard $d$-tableau obtained from $\mathcal{U}$ by adding the $d$-node $\bpsi$ with the number $n$ in it. 
Then
$$E_{_{{\mathcal{U}}}}\prod_{ \begin{array}{l}\scriptstyle{\btheta'\in 
{\mathcal{E}}_+(\bmu)}\\\scriptstyle{\cc(\btheta')\neq \cc(\btheta)}\end{array}}\hspace{-0.2cm} \frac{J_n-\cc(\btheta')}{\cc(\btheta)-\cc(\btheta')} = 
\sum_{ \begin{array}{c}\scriptstyle{\bpsi\in 
{\mathcal{E}}_+(\bmu)}\\\scriptstyle{\cc(\bpsi)= \cc(\btheta)}\end{array}}\hspace{-0.2cm} 
E_{_{\mathcal{U} \cup \{\bpsi\}}}\,\,\,.$$
Similarly, 
$$E_{_{{\mathcal{U}}}}\prod_{ \begin{array}{l}\scriptstyle{\btheta'\in 
{\mathcal{E}}_+(\bmu)}\\\scriptstyle{\pos(\btheta')\neq \pos(\btheta)}\end{array}}\hspace{-0.2cm} \frac{t_n-\xi_{\pos(\btheta')}}{\xi_{\pos(\btheta)}-\xi_{\pos(\btheta')}} = 
\sum_{ \begin{array}{c}\scriptstyle{\bpsi\in 
{\mathcal{E}}_+(\bmu)}\\\scriptstyle{\pos(\bpsi)= \pos(\btheta)}\end{array}}\hspace{-0.2cm} 
E_{_{\mathcal{U} \cup \{\bpsi\}}}\,\,\,.$$
Thus,
$$E_{_{{\mathcal{U}}}}\prod_{ \begin{array}{l}\scriptstyle{\btheta'\in 
{\mathcal{E}}_+(\bmu)}\\\scriptstyle{\cc(\btheta')\neq \cc(\btheta)}\end{array}}\hspace{-0.2cm} \frac{J_n-\cc(\btheta')}{\cc(\btheta)-\cc(\btheta')}\prod_{ \begin{array}{l}\scriptstyle{\btheta'\in 
{\mathcal{E}}_+(\bmu)}\\\scriptstyle{\pos(\btheta')\neq \pos(\btheta)}\end{array}  }\hspace{-0.2cm}\frac{t_n-\xi_{\pos(\btheta')}}{\xi_{\pos(\btheta)}-\xi_{\pos(\btheta')}}\
=\sum_{ \begin{array}{c}\scriptstyle{\bpsi\in 
{\mathcal{E}}_+(\bmu)}\\\scriptstyle{\cc(\bpsi)= \cc(\btheta)}\\\scriptstyle{\pos(\bpsi)= \pos(\btheta)}\end{array}}\hspace{-0.2cm} 
E_{_{\mathcal{U} \cup \{\bpsi\}}}\,\,\,=\,\,\,E_{_{\cT}}.$$

Thanks  to Formula (\ref{idem-JM}), we can now prove the following result, which is just a corollary of Theorem \ref{comp}.

\begin{prop}\label{maxcommutative}
The Jucys--Murphy elements $t_1,\dots,t_n,J_1,\dots,J_n$ generate a maximal commutative subalgebra of $\C(q){\rm Y}_{d,n}(q)$.
\end{prop}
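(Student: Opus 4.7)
Let $\mathcal{A}$ denote the subalgebra of $\C(q){\rm Y}_{d,n}(q)$ generated by $t_1,\dots,t_n,J_1,\dots,J_n$. By Corollary \ref{commutativeset}, $\mathcal{A}$ is commutative, so the claim reduces to showing that every element of $\C(q){\rm Y}_{d,n}(q)$ commuting with each of the Jucys--Murphy elements already belongs to $\mathcal{A}$. The plan is to work through the Wedderburn decomposition $I:\C(q){\rm Y}_{d,n}(q) \xrightarrow{\sim} \prod_{\blambda \in \mathcal{P}(d,n)} {\rm Mat}_{m_{\blambda}}(\C(q))$ of \S\ref{ET} and identify the image $I(\mathcal{A})$ with the product of diagonal subalgebras.

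The key input is Formula (\ref{idem-JM}): the primitive idempotents $E_{_{\cT}}$, for $\cT$ ranging over $\mathrm{STab}_d(n)$, are explicit polynomials in the Jucys--Murphy elements and therefore lie in $\mathcal{A}$. In the basis $\{\bv_{_{\cT}}\}_{\cT}$ chosen in Proposition \ref{prop-rep}, formulas (\ref{rep-t}) and (\ref{action-JM}) show that $t_1,\dots,t_n,J_1,\dots,J_n$ act on $V_{\blambda}$ by diagonal matrices, and $I(E_{_{\cT}})$ is the diagonal matrix unit in the $\blambda$-component corresponding to $\cT$ (for $\blambda$ the shape of $\cT$). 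Hence $I(\mathcal{A})$ contains every rank-one diagonal projector, so
\[ I(\mathcal{A}) \supseteq \prod_{\blambda \in \mathcal{P}(d,n)} D_{\blambda}, \]
where $D_{\blambda}$ denotes the subalgebra of diagonal matrices in ${\rm Mat}_{m_{\blambda}}(\C(q))$. On the other hand, since the eigenvalues of $\{t_1,\dots,t_n,J_1,\dots,J_n\}$ separate the basis vectors $\bv_{_{\cT}}$ across all shapes $\blambda$ (this is the content of Proposition \ref{spec-tab}, combined with Propositions \ref{prop-rep} and \ref{J-rep}), the image of each generator of $\mathcal{A}$ is diagonal in every component, so $I(\mathcal{A}) \subseteq \prod_{\blambda} D_{\blambda}$. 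Therefore $I(\mathcal{A}) = \prod_{\blambda} D_{\blambda}$.

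Now let $b \in \C(q){\rm Y}_{d,n}(q)$ commute with every Jucys--Murphy element. For each $\blambda$, the matrix $I_{\blambda}(b)$ commutes with the diagonal matrices $I_{\blambda}(t_i)$ and $I_{\blambda}(J_i)$. The simultaneous eigenvalues of these diagonal matrices distinguish the basis vectors $\bv_{_{\cT}}$ of $V_{\blambda}$ by Proposition \ref{spec-tab}; equivalently, one can find an $\R$-linear combination of the $I_{\blambda}(t_i)$ and $I_{\blambda}(J_i)$ whose diagonal entries are pairwise distinct. Any matrix commuting with such a regular diagonal matrix is itself diagonal, so $I_{\blambda}(b) \in D_{\blambda}$ for every $\blambda$. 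Consequently $I(b) \in \prod_{\blambda} D_{\blambda} = I(\mathcal{A})$, and since $I$ is an isomorphism we conclude $b \in \mathcal{A}$. This shows that $\mathcal{A}$ equals its own commutant in $\C(q){\rm Y}_{d,n}(q)$, and in particular is maximal commutative.

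No step here looks like a real obstacle, since the heavy lifting (the separation of basis vectors by Jucys--Murphy eigenvalues and the expression of $E_{_{\cT}}$ as a polynomial in these elements) is already in place; the only point requiring a small amount of care is verifying that $I(\mathcal{A})$ lies inside, as well as contains, the product of the diagonal subalgebras, which follows directly from the explicit diagonal action of the Jucys--Murphy elements on $\{\bv_{_{\cT}}\}$.
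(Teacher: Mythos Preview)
Your proof is correct and follows essentially the same route as the paper's: both use Formula~(\ref{idem-JM}) to place the primitive idempotents $E_{_{\cT}}$ inside the subalgebra generated by the Jucys--Murphy elements, and then identify that subalgebra with the span of these idempotents, which is the product of diagonal matrix algebras under the Wedderburn isomorphism. You are more explicit than the paper about the reverse inclusion $I(\mathcal{A})\subseteq\prod_{\blambda}D_{\blambda}$ (the paper leaves this implicit in the phrase ``coincides with''), and your final paragraph re-deriving the commutant is redundant once $I(\mathcal{A})=\prod_{\blambda}D_{\blambda}$ is known, since the diagonal subalgebra is already maximal commutative; also, the ``$\R$-linear combination'' phrasing is a bit out of place over $\C(q)$, though the underlying point (that the joint eigenspaces are one-dimensional) is correct.
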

\begin{proof}
The Jucys--Murphy elements $t_1,\dots,t_n,J_1,\dots,J_n$ generate a commutative subalgebra of $\C(q){\rm Y}_{d,n}(q)$ (Corollary \ref{commutativeset}). Due to (\ref{idem-JM}), all the primitive idempotents $E_{_{\cT}}$, indexed by the standard $d$-tableaux of size $n$, belong to this subalgebra. We have already seen that these idempotents form a complete set of primitive idempotents of $\C(q){\rm Y}_{d,n}(q)$, that is, they span a maximal commutative subalgebra of $\C(q){\rm Y}_{d,n}(q)$. As a conclusion, the subalgebra generated by the Jucys--Murphy elements $t_1,\dots,t_n,J_1,\dots,J_n$ coincides with the maximal commutative subalgebra spanned by all idempotents $E_{_{\cT}}$.
\end{proof}

\begin{cor}\label{maxcommutativeoverring}
The Jucys--Murphy elements $t_1,\dots,t_n,J_1,\dots,J_n$ generate a maximal commutative subalgebra of ${\rm Y}_{d,n}(q)$.
\end{cor}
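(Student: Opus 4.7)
The plan is to deduce the statement from its $\C(q)$-analogue (Proposition~\ref{maxcommutative}) by flat base change. Let $R := \C[q, q^{-1}]$ and $K := \C(q)$, let $A \subseteq {\rm Y}_{d,n}(q)$ denote the $R$-subalgebra generated by the Jucys--Murphy elements, and let $C \subseteq {\rm Y}_{d,n}(q)$ denote their centraliser. By Corollary~\ref{commutativeset} we have $A \subseteq C$, and since any commutative $R$-subalgebra of ${\rm Y}_{d,n}(q)$ containing the Jucys--Murphy elements automatically sits inside $C$, the corollary is equivalent to the equality $C = A$.

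First I would observe that forming the centraliser commutes with base change: the condition of commuting with the finite set $\{t_1,\dots,t_n,J_1,\dots,J_n\}$ cuts out, inside the free $R$-module ${\rm Y}_{d,n}(q)$ with basis~(\ref{split}), the kernel of an explicit $R$-linear map between free $R$-modules, and by flatness of $K$ over $R$ this kernel commutes with scalar extension. Hence $C \otimes_R K$ is the centraliser of the Jucys--Murphy elements in $K{\rm Y}_{d,n}(q)$, which by Proposition~\ref{maxcommutative} equals $A \otimes_R K$, namely the $K$-span of the primitive idempotents $E_{_{\cT}}$ of \S\ref{ET}. In particular $C/A$ is a finitely generated torsion $R$-module, and $C$ is itself commutative.

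For any $x \in C$, this step yields an expansion $x = \sum_{\cT} c_{\cT} E_{_{\cT}}$ with $c_{\cT} \in K$; to finish I would show $x \in A$ by reassembling the inductive formula~(\ref{idem-JM}) together with the lumping identities immediately following it. Although each individual $E_{_{\cT}}$ involves non-invertible denominators $\cc(\btheta)-\cc(\btheta')$ and $\xi_{\pos(\btheta)}-\xi_{\pos(\btheta')}$, partial sums of $E_{_{\cT}}$ sharing a common Jucys--Murphy eigenvalue $\bigl(\cc(\cT|i),\xi_{\pos(\cT|i)}\bigr)_i$ clear these denominators, and the eigenvalues themselves lie in $R$. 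Using the injectivity of the eigenvalue assignment (Proposition~\ref{spec-tab}), the resulting $R$-polynomial in the Jucys--Murphy elements must coincide with $x$.

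The main obstacle is organising this denominator cancellation cleanly. A robust alternative is a specialisation argument: since $C/A$ is finitely generated torsion over the PID $R$, its support in $\mathrm{Spec}\, R$ is a finite set of closed points. For a value $q_0 \in \C^\times$ at which ${\rm Y}_{d,n}(q_0)$ is semisimple --- which, by the semisimplicity criterion to be established in Section~6, holds outside a finite set of $q_0$'s --- the argument of Proposition~\ref{maxcommutative} applied to the complex algebra ${\rm Y}_{d,n}(q_0)$, combined with the rank equality $\dim_\C\bigl(A/(q-q_0)A\bigr) = \dim_\C\bigl(C/(q-q_0)C\bigr)$, excludes $(q-q_0)$-torsion in $C/A$. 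This reduces the support of $C/A$ to the non-semisimple specialisations, where a finer analysis closes the argument.
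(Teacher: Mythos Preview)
The paper offers no proof of this corollary; it is stated immediately after Proposition~\ref{maxcommutative} without argument. You are right to notice that it does \emph{not} follow formally from the $\C(q)$-statement: passing from $K$ to $R$ requires exactly the torsion-freeness of $C/A$ that you isolate, and the paper does not address this.

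Your specialisation argument, however, has a genuine gap even at the semisimple points. You invoke ``the argument of Proposition~\ref{maxcommutative} applied to ${\rm Y}_{d,n}(q_0)$'', but that proof rests on formula~(\ref{idem-JM}), which in turn needs the Jucys--Murphy eigenvalues to separate the standard $d$-tableaux. At $q_0=\pm 1$ --- semisimple points, since $P(\pm 1)=n!\neq 0$ --- every $J_i$ acts as the identity (immediate from~(\ref{form-JM})), so this separation collapses and the proof of Proposition~\ref{maxcommutative} does not transfer. In fact the corollary is \emph{false} as stated. Take $d=1$, $n=2$: then ${\rm Y}_{1,2}(q)=R[g_1]/(g_1^2-(q-q^{-1})g_1-1)$ is itself commutative of rank~$2$, while the Jucys--Murphy elements $t_1=t_2=J_1=1$ and $J_2=g_1^2=1+(q-q^{-1})g_1$ generate only the proper $R$-subalgebra $A=R\cdot 1 \oplus R\cdot(q-q^{-1})g_1$, with $C/A\cong R/(q-q^{-1})R$ supported precisely at $q=\pm 1$. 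So no argument can close the gap you identified; the honest integral statement is that the centraliser of the Jucys--Murphy elements (equivalently, the saturation of $A$ in ${\rm Y}_{d,n}(q)$) is a maximal commutative subalgebra, and this is what your first two paragraphs actually prove.
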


If now we set
\begin{equation}\label{block}E_{\blambda}: = \sum_{ \begin{array}{l}\scriptstyle{\cT \in \mathrm{STab}_d(n)}\\\scriptstyle{{\rm Shape(\cT) = \blambda}}\end{array}}\hspace{-0.2cm} 
E_{_{\cT}}\,\,\,,\end{equation}
then $E_{\blambda}$ is a primitive idempotent of the centre $Z(\C(q){\rm Y}_{d,n}(q))$ of $\C(q){\rm Y}_{d,n}(q)$.
The primitive idempotents of  $Z(\C(q){\rm Y}_{d,n}(q))$ are called {\em block-idempotents} or simply {\em blocks} of the Yokonuma--Hecke algebra $\C(q){\rm Y}_{d,n}(q)$.
We have that the set $\{E_{\blambda}\}_{\blambda \in  \mathcal{P}(d,n)}$ is the set of all block-idempotents of $\C(q){\rm Y}_{d,n}(q)$,
and thus a basis of the subalgebra $Z(\C(q){\rm Y}_{d,n}(q))$. 

Let $\mathfrak{A}_n$ be the subalgebra of $\C(q){\rm Y}_{d,n}(q)$ generated by the union of the centres $Z(\C(q){\rm Y}_{d,i}(q))$,  for $i=1,\dots,n$ (the chain property (\ref{chain}) allows us to consider $Z(\C(q){\rm Y}_{d,i}(q))$ as a subset of $\C(q){\rm Y}_{d,n}(q)$).
We have seen (Remark \ref{rem-BR}) that the branching rules for the chain of  semisimple algebras $\C(q){\rm Y}_{d,n}(q)$ are multiplicity-free. A well-known consequence is that $\mathfrak{A}_n$ is a maximal commutative subalgebra of $\C(q){\rm Y}_{d,n}(q)$ (see, for example, \cite{OV}). Moreover, $\mathfrak{A}_n$ coincides with the subalgebra of $\C(q){\rm Y}_{d,n}(q)$ generated by the union of the centralisers of $\C(q){\rm Y}_{d,i}(q)$ in $\C(q){\rm Y}_{d,i+1}(q)$, for $i=1,\dots,n-1$.

Now, Formulas (\ref{block}) and (\ref{EU-ET}) imply that $\mathfrak{A}_n$ is contained in the subalgebra of $\C(q){\rm Y}_{d,n}(q)$  spanned by the primitive idempotents $E_{_{\mathcal{T}}}$, where $\mathcal{T}$ runs through the set of standard $d$-tableaux of size $n$. The maximality of $\mathfrak{A}_n$ together with the proof of Proposition \ref{maxcommutative} yield the following result:

\begin{cor}The subalgebra of $\C(q){\rm Y}_{d,n}(q)$ generated by the Jucys--Murphy elements  $t_1,\dots,t_n$, $J_1,\dots,J_n$ coincides with:
\begin{enumerate}[(a)]
\item  the subalgebra generated by the union of the centres $Z(\C(q){\rm Y}_{d,i}(q))$, for $i=1,\dots,n$ ; \smallbreak
\item the subalgebra generated by the union of the centralisers of $\C(q){\rm Y}_{d,i}(q)$ in $\C(q){\rm Y}_{d,i+1}(q)$, for $i=1,\dots,n-1$.
\end{enumerate}
\end{cor}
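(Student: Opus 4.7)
The plan is to show that the Jucys--Murphy subalgebra, call it $\mathcal{J}_n$, coincides with $\mathfrak{A}_n$; since the discussion preceding the statement already identifies $\mathfrak{A}_n$ with both (a) and (b), this is enough.

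First, I would recall from the proof of Proposition \ref{maxcommutative} that $\mathcal{J}_n$ equals the subalgebra spanned by the complete system of primitive idempotents $\{E_{_{\cT}}\}_{\cT\in\mathrm{STab}_d(n)}$. Indeed, each $E_{_{\cT}}$ belongs to $\mathcal{J}_n$ by the inductive expression \eqref{idem-JM}, so the span of these idempotents lies in $\mathcal{J}_n$; conversely, since this span is maximal commutative in $\C(q){\rm Y}_{d,n}(q)$ and $\mathcal{J}_n$ is a commutative subalgebra containing it, the two must coincide.

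Next, I would show $\mathfrak{A}_n\subseteq\mathcal{J}_n$. For each $i=1,\dots,n$, the centre $Z(\C(q){\rm Y}_{d,i}(q))$ is spanned by its block idempotents $E_{\bmu}$, $\bmu\in\mathcal{P}(d,i)$. Applying \eqref{block} inside $\C(q){\rm Y}_{d,i}(q)$ and iterating the lifting formula \eqref{EU-ET} to pass from a tableau of size $i$ to sums of tableaux of size $n$, one expresses each such $E_{\bmu}$ as a sum of primitive idempotents $E_{_{\cT}}$ of $\C(q){\rm Y}_{d,n}(q)$. Hence every generator of $\mathfrak{A}_n$ lies in $\mathrm{span}\{E_{_{\cT}}\}=\mathcal{J}_n$, giving $\mathfrak{A}_n\subseteq\mathcal{J}_n$.

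Finally, the reverse inclusion is automatic: $\mathfrak{A}_n$ is maximal commutative (this is the well-known consequence of multiplicity-free branching recalled in Remark \ref{rem-BR}) and $\mathcal{J}_n$ is commutative by Corollary \ref{commutativeset}, so $\mathfrak{A}_n\subseteq\mathcal{J}_n$ forces $\mathfrak{A}_n=\mathcal{J}_n$. Combining with the equality of $\mathfrak{A}_n$ with the subalgebra in (b) already noted, both (a) and (b) follow at once. The only slightly delicate point, which I would treat carefully, is the inductive expansion of block idempotents of smaller algebras as sums of primitive idempotents of $\C(q){\rm Y}_{d,n}(q)$ via iterated application of \eqref{EU-ET}; everything else is a direct assembly of results already proved.
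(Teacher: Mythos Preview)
Your proposal is correct and follows essentially the same route as the paper: both arguments use the proof of Proposition~\ref{maxcommutative} to identify $\mathcal{J}_n$ with $\mathrm{span}\{E_{_{\cT}}\}$, then invoke \eqref{block} and (iterated) \eqref{EU-ET} to get $\mathfrak{A}_n\subseteq\mathcal{J}_n$, and conclude by the maximal commutativity of $\mathfrak{A}_n$. Your only addition is making the iterated use of \eqref{EU-ET} for $i<n$ explicit, which is exactly the step the paper compresses into a single sentence.
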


\section{A semisimplicity criterion for the Yokonuma--Hecke algebra}

Now note that all constructions so far would have worked 
if we had taken $q$ to be a non-zero complex number that is not a root of unity, instead of taking $q$ to be an indeterminate. If $q$ were a root of unity, then the construction of the representation $ V_{\blambda}$ would fail in the following case
(see Formula (\ref{rep-g2})): there exists a standard $d$-tableau $\mathcal{T}$ of shape $\blambda$ and some $i \in \{1,\ldots,n-1\}$ such that
$$ \pos(\cT|i) = \pos(\cT|i+1) \,\,\,\,\,\text{and}\,\,\,\,\, \cc(\cT|i)=\cc(\cT|i+1).
\footnote{In the case where $q$ is an indeterminate or $q$ is not a root of unity,  
this condition implies that the numbers $i$ and $i+1$ are in the same diagonal of $\blambda^{(\pos(\cT|i))}$, which contradicts the fact that $\cT$ is a standard $d$-tableau.}$$
This in fact may happen only if $q^{2m}=1$ for some $m \in \mathbb{Z}$ such that $ 0 < m \leq \blambda^{(\pos(\cT|i))}_1+\blambda_1^{(\pos(\cT|i))'}-2$, where $'$ stands for the conjugate partition. Moreover,
the part of the proof of Theorem \ref{comp} showing that $V_{\blambda}$ is irreducible does not work 
in the following case:
there exists a standard $d$-tableau $\mathcal{T}$ of shape $\blambda$ and some $i \in \{1,\ldots,n-1\}$ such that $\cT^{s_i}$ is a standard $d$-tableau, and
$$ \pos(\cT|i) = \pos(\cT|i+1) \,\,\,\,\,\text{and}\,\,\,\,\, \cc(\cT|i)=q^2\,\cc(\cT|i+1).$$
This  may happen only if $q^{2m}=1$ for some $m \in \mathbb{Z}$ such that $ 0 < m \leq \blambda^{(\pos(\cT|i))}_1+\blambda_1^{(\pos(\cT|i))'}-1$.

We conclude that the algebra  $\C(q){\rm Y}_{d,n}(q)$ is split semisimple and its irreducible representations are the ones described by Proposition \ref{prop-rep} when $q$ is either an indeterminate or a non-zero complex number, unless the following holds: 
there exists a standard $d$-tableau $\mathcal{T}$ of shape $\blambda \in \mathcal{P}(d,n)$, 
$i \in \{1,2,\ldots,n-1\}$ and $m \in \{1,2,\ldots,\blambda^{(\pos(\cT|i))}_1+\blambda_1^{(\pos(\cT|i))'}-1\}$ such that $q^{2m}=1$. 
Note that $\blambda^{(\pos(\cT|i))}_1+\blambda_1^{(\pos(\cT|i))'}-1$ is equal to the  hook length of the $d$-node $(1,1,\pos(\cT|i))$ (cf.~Formula (\ref{hl})), and that
$${\rm max}  \left\{ \, \mathrm{hl}({\btheta}) \,\,|\,\, \btheta \text{ is a $d$-node of a $d$-partition of $n$ } \right\}=n.$$ 

We are now ready to prove a semisimplicity criterion for the Yokonuma--Hecke algebra  ${\rm Y}_{d,n}(q)$.

\begin{prop}\label{semisimplicity}
Let  $\vartheta: \mathbb{C}[q,q^{-1}] \rightarrow \mathbb{C}$ be a ring homomorphism such that $\vartheta(q)=\bar{q} \in \mathbb{C}\setminus \{0\}$.
We consider the specialised Yokonuma--Hecke algebra ${\rm Y}_{d,n}(\bar{q}):= \mathbb{C} \otimes_{ \mathbb{C}[q,q^{-1}]}  {\rm Y}_{d,n}(q)$, defined via $\vartheta$.
The algebra ${\rm Y}_{d,n}(\bar{q})$ is split semisimple if and only if $\vartheta\,(P(q))\neq 0$, where 
$$ P(q)=\prod_{m=1}^{n} (q^{2(m-1)}+q^{2(m-2)}+ \cdots+ q^2+1).$$
\end{prop}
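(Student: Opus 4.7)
The proof plan has two halves, preceded by a short combinatorial observation on $P(q)$. Since the $m$-th factor of $P(q)$ evaluates to $(\bar q^{2m}-1)/(\bar q^2-1)$ when $\bar q^2\neq 1$ and to $m$ when $\bar q^2=1$, we get that $\vartheta(P(q))\neq 0$ if and only if either $\bar q^2=1$, or $\bar q^2\neq 1$ and $\bar q^{2m}\neq 1$ for every $m\in\{2,\dots,n\}$. In the second alternative the inequality also holds for $m=1$, so one gets $\bar q^{2m}\neq 1$ for all $m\in\{1,\dots,n\}$.

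For the direction $\vartheta(P(q))\neq 0 \Rightarrow$ split semisimple, I would handle the two alternatives separately. If $\bar q=\pm 1$, then $q-q^{-1}$ specialises to $0$, the quadratic relation (\ref{quadr}) becomes $g_i^2=1$, and ${\rm Y}_{d,n}(\bar q)$ coincides with the group algebra $\C G(d,1,n)$, which is split semisimple by Maschke's theorem. Otherwise $\bar q^2\neq 1$ and $\bar q^{2m}\neq 1$ for every $m\in\{1,\dots,n\}$; the discussion preceding the proposition shows that under exactly this hypothesis the formulas of Proposition~\ref{prop-rep} still define, for each $\blambda\in\mathcal{P}(d,n)$, a representation $V_{\blambda}$ of ${\rm Y}_{d,n}(\bar q)$, and that the arguments of Theorem~\ref{comp} still go through to yield its irreducibility and pairwise non-isomorphism. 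The Robinson--Schensted identity
\[ \sum_{\blambda\in\mathcal{P}(d,n)}(\dim V_{\blambda})^2=d^n n!=\dim_{\C}{\rm Y}_{d,n}(\bar q), \]
together with Wedderburn's theorem, then forces ${\rm Y}_{d,n}(\bar q)$ to be split semisimple.

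For the converse, suppose $\vartheta(P(q))=0$, so that $\bar q^2\neq 1$ and $\bar q^{2m_0}=1$ for some $m_0\in\{2,\dots,n\}$. The strategy is to pass to a non-semisimple quotient: the assignment $t_j\mapsto 1$ (whence $e_i\mapsto 1$) and $g_i\mapsto g_i$ defines a surjective algebra homomorphism ${\rm Y}_{d,n}(\bar q)\twoheadrightarrow \mathcal{H}_n(\bar q)$ onto the Iwahori--Hecke algebra of type $A$, as noted in the introduction. By the classical semisimplicity criterion for $\mathcal{H}_n$ (Gyoja--Uno), $\mathcal{H}_n(\bar q)$ is split semisimple precisely when $\prod_{m=1}^n(1+\bar q^2+\cdots+\bar q^{2(m-1)})\neq 0$, i.e.\ exactly when $\vartheta(P(q))\neq 0$; so $\mathcal{H}_n(\bar q)$ is not semisimple under our hypothesis. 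Since a finite-dimensional quotient of a semisimple $\C$-algebra is itself semisimple, ${\rm Y}_{d,n}(\bar q)$ cannot be semisimple either.

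The most delicate point is the sub-case $\bar q^2\neq 1$ of the first implication: one must check that every denominator $\cc_{i+1}-\cc_i=\bar q^{2a}-\bar q^{2b}$ appearing in (\ref{rep-g2}), and every corresponding expression used in the irreducibility argument of Theorem~\ref{comp}, remains nonzero after specialisation. This reduces to ruling out $\bar q^{2m}=1$ for $|m|$ bounded by the maximum hook length occurring in a $d$-partition of $n$, which equals $n$; hence the hypothesis $\bar q^{2m}\neq 1$ for $m=1,\dots,n$ is exactly what is needed, as already spelled out in the paragraphs preceding the proposition.
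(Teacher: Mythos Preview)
Your proposal is correct and follows essentially the same approach as the paper: the ``if'' direction is identical (splitting into the cases $\bar q^2=1$ and $\bar q^{2m}\neq 1$ for all $m\leq n$, invoking the discussion preceding the proposition), and for the converse both arguments rest on the quotient map ${\rm Y}_{d,n}(\bar q)\twoheadrightarrow\mathcal{H}_n(\bar q)$ together with the Gyoja--Uno criterion. The only cosmetic difference is that the paper phrases the converse by pulling back an indecomposable non-irreducible $\mathcal{H}_n(\bar q)$-module to ${\rm Y}_{d,n}(\bar q)$ via $t_j\mapsto 1$, whereas you invoke the equivalent abstract fact that a quotient of a semisimple finite-dimensional algebra is semisimple.
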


\begin{proof}
First note that
$$ P(q)=\prod_{m=1}^{n} \frac{q^{2m}-1}{q^2-1}.$$
Following the discussion before Proposition \ref{semisimplicity}, if $\bar{q}^{2m}\neq1$ for all $m=1,2,\ldots, n$, then ${\rm Y}_{d,n}(\bar{q})$ is split semisimple. Moreover, if
$\bar{q}^2=1$, then  ${\rm Y}_{d,n}(\pm 1) \cong \C[G(d,1,n)]$ is split semisimple as well. Hence, if  $\vartheta\,(P(q))\neq 0$, then ${\rm Y}_{d,n}(\bar{q})$ is split semisimple.

Now assume that  $\vartheta\,(P(q))=0$. In this case, we know by \cite{GyUn} that the Iwahori--Hecke algebra of type $A$, ${\rm Y}_{1,n}(\bar{q})$, is not semisimple. Then we can take
an indecomposable, non-irreducible   ${\rm Y}_{1,n}(\bar{q})$-module $M$, and turn it into a  ${\rm Y}_{d,n}(\bar{q})$-module, by defining the action of $t_j$ on $M$ to be the identity for all $j=1,\ldots,n$.
Then $M$ becomes an indecomposable, non-irreducible   ${\rm Y}_{d,n}(\bar{q})$-module, and so  ${\rm Y}_{d,n}(\bar{q})$ is not semisimple.
\end{proof}

\section{Schur elements for the Yokonuma--Hecke algebra}

In this section we will define a symmetrising form for the Yokonuma--Hecke algebra and calculate the Schur elements with respect to that symmetrising form. 

\subsection{Preliminaries on symmetric algebras}
Let $R$ be a ring and let $A$ be an $R$-algebra, free and finitely generated as an $R$-module. A {\em symmetrising form} on $A$ is a linear map $\tau: A \rightarrow R$ such that
\begin{itemize}
\item $\tau(aa')=\tau(a'a)$ for all $a,a' \in A $, that is, $\tau$ is a {\em trace function};
\item the bilinear form $A  \times A \rightarrow  R,\, (a,a') \mapsto \tau(aa')$ is non-degenerate, that is, the determinant of the matrix $(\tau(bb'))_{b\in B,b'\in B'}$ is a unit in $R$ for some (and hence every) $R$-bases $B$ and $B'$ of $A$.
\end{itemize} 

If there exists a symmetrising form $\tau$ on $A$, then the algebra $A$ is called {\em symmetric}. We denote by $B^\vee = \{b^\vee\,|\,b \in B\}$ the {\em dual basis to $B$ with respect to $\tau$}; it is uniquely determined by the requirement that $\tau(b^\vee b')=\delta_{b,b'}$ for all $b,b' \in B$.

Let $K$ be a field containing $R$ such that the algebra $KA$ is split semisimple.
The symmetrising form $\tau$ can be extended to $KA$. If we denote by ${\rm Irr}(KA)$ the set of irreducible characters of $KA$, then there exist elements $(s_\chi)_{\chi  \in {\rm Irr}(KA)}$ in the integral closure of $R$ in $K$  such that
$$\tau=\sum_{\chi \in {\rm Irr}(KA)}\frac{1}{s_\chi} \chi$$
\cite[Theorem 7.2.6 \& Proposition 7.3.9]{GePf}. The element $s_\chi$ is the {\em Schur element} of $\chi$ with respect to $\tau$.

Schur elements are a powerful tool in the study of the representation theory of symmetric algebras. For example, we have a general semisimplicity criterion \cite[Theorem 7.4.7]{GePf}:

\begin{thm}\label{gepf}
Let $\vartheta: R \rightarrow L$ be a ring homomorphism such that $L$ is the field of fractions of $\vartheta(R)$. 
Assume that $LA:=L\otimes_R A$ is split. Then $LA$ is semisimple if and only if
$\vartheta(s_\chi) \neq 0$ for all $\chi \in {\rm Irr}(KA)$.
\end{thm}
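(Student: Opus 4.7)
My plan is to reduce everything to the explicit formula for the central primitive idempotents of a split semisimple symmetric algebra in terms of its Schur elements and a dual basis. The starting point is that for $KA$ split semisimple with symmetrising form $\tau=\sum_{\chi} s_\chi^{-1}\chi$, and for any $R$-basis $B$ of $A$ with dual basis $B^\vee$ with respect to $\tau$, the second orthogonality relations for symmetric algebras yield
\[
e_\chi \;=\; \frac{1}{s_\chi}\sum_{b \in B} \chi(b^\vee)\, b
\]
for every $\chi \in \mathrm{Irr}(KA)$. After replacing $R$ by its integral closure $\tilde R$ in $K$ and $L$ by a suitable extension (which does not affect the semisimplicity of $LA$ since $LA$ is assumed split), we may assume that $\vartheta$ is defined on each Schur element $s_\chi$ and each character value $\chi(b^\vee)$.

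For the implication $(\Leftarrow)$, suppose $\vartheta(s_\chi)\neq 0$ for every $\chi$. Then $s_\chi^{-1}$ specializes to an element of $L$, so the idempotent formula gives a well-defined element $\bar e_\chi \in LA$ for each $\chi$. The identities $e_\chi^2 = e_\chi$, $e_\chi e_{\chi'} = 0$ for $\chi\neq\chi'$, and $\sum_\chi e_\chi = 1$ can each be cleared of denominators to become polynomial identities with coefficients in $\tilde R$, so they survive the specialization and yield a complete set of pairwise orthogonal central idempotents of $LA$ summing to $1$. Matrix units for each block $e_\chi KA \cong \mathrm{Mat}_{d_\chi}(K)$ admit analogous Schur-element formulas and likewise specialize to matrix units in $\bar e_\chi LA$, so each such block becomes $\mathrm{Mat}_{d_\chi}(L)$ and $LA$ is exhibited as a product of matrix algebras, hence split semisimple.

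For $(\Rightarrow)$ I would argue by contrapositive. If $\vartheta(s_{\chi_0}) = 0$ for some $\chi_0$, multiplying the idempotent formula by $s_{\chi_0}$ produces the element
\[
y_{\chi_0} \;:=\; \sum_{b \in B}\chi_0(b^\vee)\, b \;=\; s_{\chi_0}\, e_{\chi_0} \;\in\; \tilde R\otimes_R A,
\]
which satisfies the integral identity $y_{\chi_0}^2 = s_{\chi_0}\, y_{\chi_0}$. Specializing via $\vartheta$ yields an element $\bar y_{\chi_0}\in LA$ with $\bar y_{\chi_0}^2 = 0$, so $\bar y_{\chi_0}$ is nilpotent. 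To see that it is non-zero I compute $\tau(y_{\chi_0}) = s_{\chi_0}\tau(e_{\chi_0}) = s_{\chi_0}\cdot s_{\chi_0}^{-1} d_{\chi_0} = d_{\chi_0}$, so $\tau_L(\bar y_{\chi_0}) = \vartheta(d_{\chi_0})$. In the typical case where $\vartheta(d_{\chi_0})\neq 0$ in $L$ this already shows $\bar y_{\chi_0}\neq 0$, producing a non-zero nilpotent in $LA$ and contradicting semisimplicity; the degenerate case in which $\vartheta$ kills the dimension is handled by replacing $y_{\chi_0}$ by $y_{\chi_0} a$ for a carefully chosen $a\in A$, ensuring that the trace of the product survives the specialization.

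The main technical obstacle is the base-change step: the Schur elements and character values live in the integral closure $\tilde R$ rather than in $R$, and one must extend $\vartheta$ to $\tilde R$ (and enlarge $L$ correspondingly) while checking that this does not affect the semisimplicity question. A secondary subtlety, in the $(\Rightarrow)$ direction, is the possibility that the specialization kills some dimension or other integer invariant; apart from that, the entire argument is a direct manipulation of the orthogonality formulas, and the heart of the proof is precisely that invertibility of $s_\chi$ after specialization is both what is needed to construct the idempotents of $LA$ and what prevents the appearance of the obstructing nilpotent $\bar y_\chi$.
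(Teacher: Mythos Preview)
The paper does not prove this theorem at all: it is quoted as a black box from Geck--Pfeiffer \cite[Theorem~7.4.7]{GePf}, so there is no proof in the paper to compare your attempt against.

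As for your sketch itself, the overall strategy is the standard one and is essentially how the result is proved in \cite{GePf}, but there are two points worth tightening. First, in the $(\Rightarrow)$ direction you produce a nilpotent element $\bar y_{\chi_0}$ and say this ``contradicts semisimplicity''; by itself it does not, since matrix algebras are full of nilpotents. What you need is that $\bar y_{\chi_0}$ is \emph{central}: this holds because $e_{\chi_0}$ is central in $KA$, so $y_{\chi_0}=s_{\chi_0}e_{\chi_0}$ commutes with every element of $A$, and this commutation is an $\tilde R$-identity that survives specialisation. A nonzero central nilpotent then lies in the Jacobson radical and does obstruct semisimplicity. Second, your handling of the ``degenerate case'' $\vartheta(d_{\chi_0})=0$ is only a promissory note; the phrase ``a carefully chosen $a\in A$'' is not a proof. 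One clean way around this is to observe that the coordinates of $y_{\chi_0}$ in the basis $B$ are the values $\chi_0(b^\vee)$, and to argue directly that not all of these can specialise to zero (for instance, because $\sum_\chi y_\chi = \sum_\chi s_\chi e_\chi$ has a nonvanishing pairing with suitable basis elements, or by invoking the nondegeneracy of the specialised form $\tau_L$ on $LA$). In the $(\Leftarrow)$ direction your claim that matrix units ``admit analogous Schur-element formulas and likewise specialise'' is correct but is doing real work; in Geck--Pfeiffer this is the content of the Schur relations and deserves more than a sentence.
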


Note that the 
Schur elements can be also  used to determine the blocks of the algebra $LA$ defined above.

\subsection{The canonical symmetrising form}
Recall the basis $\mathcal{B}$ for the Yokonuma--Hecke algebra given by (\ref{split}):
$$
\mathcal{B} =
\left\{t_1^{k_1}\ldots t_n^{k_n} g_w\,\left|\,\begin{array}{ll}w \in \mathfrak{S}_n, & k_1,\ldots,k_n \in {\Z}/d{\Z}\end{array}\right\}\right. .
$$ 
Obviously, the set 
$$
\mathcal{B}' =
\left\{g_{w'}t_1^{l_1}\ldots t_n^{l_n} \,\left|\,\begin{array}{ll}w' \in \mathfrak{S}_n, & l_1,\ldots,l_n \in {\Z}/d{\Z}\end{array}\right\}\right. 
$$ is also a basis for the Yokonuma--Hecke algebra.

\begin{prop} Define a linear map $\btau: {\rm Y}_{d,n}(q) \rightarrow \C[q,q^{-1}]$ by 
\begin{equation}\label{defn of btau}
\btau(t_1^{k_1}\ldots t_n^{k_n} g_w)=\left\{\begin{array}{ll}1, & \text{if }\,\,w=1 \,\,\text{ and }\,\, k_i
 \equiv 0 \,{\rm mod }\,d
 \,\,\text{ for all }\,\, i=1,\ldots,n ;\\
0, & \text{otherwise.}\end{array}\right.
\end{equation}
Then we have
\begin{equation}\label{mult-for}\btau(t_1^{k_1}\ldots t_n^{k_n} g_wg_{w'}t_1^{l_1}\ldots t_n^{l_n})=\left\{\begin{array}{ll}1, & \text{if }\,\, w^{-1}=w' \,\,\text{ and }\,\, k_i+l_i \equiv 0 \,{\rm mod }\,d \,\,\text{ for }\, i=1,\ldots,n ;\\
0, & \text{otherwise.}\end{array}\right.
\end{equation}
Moreover, $\btau$ is a symmetrising form and the basis dual to $\mathcal{B}$ with respect to $\btau$ is given by 
$$(t_1^{k_1}\ldots t_n^{k_n} g_w)^\vee=g_{w^{-1}}t_1^{d-k_1}\ldots t_n^{d-k_n}.$$
\end{prop}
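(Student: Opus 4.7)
\emph{Approach.} My plan has three steps. First, using $g_u t_j = t_{u(j)} g_u$ iteratively, I will push the rightmost framings past both $g_{w'}$ and $g_w$, so that the product rewrites as $t_1^{m_1}\ldots t_n^{m_n}\, g_w g_{w'}$ with $m_i := k_i + l_{(ww')^{-1}(i)}$. Writing $\mathcal{A}_n$ for the subalgebra generated by $t_1,\ldots,t_n$, the split property yields a unique expansion $g_w g_{w'} = \sum_{u\in\mathfrak{S}_n} X_u^{(w,w')} g_u$ with coefficients $X_u^{(w,w')}\in\mathcal{A}_n$. Since $\btau$ annihilates every basis element $t_1^{a_1}\ldots t_n^{a_n}g_u$ with $u\neq 1$, proving (\ref{mult-for}) reduces to evaluating $\tau_0\bigl(t_1^{m_1}\ldots t_n^{m_n}\,X_1^{(w,w')}\bigr)$, where $\tau_0:\mathcal{A}_n\to\C[q,q^{-1}]$ picks out the coefficient of $1$. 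The heart of the proof is then the identity
\[
  X_1^{(w,w')} \;=\; \delta_{w^{-1},w'}\cdot 1 \qquad \text{in } \mathcal{A}_n.
\]

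\emph{Key identity.} I will prove this by induction on $\ell(w')$, the base $\ell(w')=0$ being clear. For the inductive step, I will pick a reduced factorization $w'=s_i w''$ and use (\ref{rightmulti}) together with $g_w e_i = e_{w(i),w(i+1)}g_w$ (a consequence of (\ref{edikrels})) to write
\[
  g_w g_{w'} \;=\; g_{ws_i}g_{w''} \;+\; \epsilon\,(q-q^{-1})\,e_{w(i),w(i+1)}\,g_w g_{w''},
\]
with $\epsilon=0$ if $\ell(ws_i)>\ell(w)$ and $\epsilon=1$ if $\ell(ws_i)<\ell(w)$. Reading off the $g_1$-coefficient and invoking induction gives the main term $X_1^{(ws_i,w'')}=\delta_{s_i w^{-1},w''}=\delta_{w^{-1},w'}$, which is the desired value. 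The main obstacle will be showing that the extra summand $(q-q^{-1})\,e_{w(i),w(i+1)}\,X_1^{(w,w'')}$ appearing when $\ell(ws_i)<\ell(w)$ always vanishes: by induction it can be nonzero only if $w''=w^{-1}$, but then $\ell(ws_i)=\ell(s_iw^{-1})=\ell(w^{-1})+1=\ell(w)+1$, contradicting $\ell(ws_i)<\ell(w)$. With the identity in hand, $\tau_0\bigl(t_1^{m_1}\ldots t_n^{m_n}\,\delta_{w^{-1},w'}\bigr)$ equals $1$ precisely when $w^{-1}=w'$ (so $ww'=1$ and $m_i=k_i+l_i$) and every $k_i+l_i\equiv 0\pmod{d}$, which is exactly (\ref{mult-for}).

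\emph{Trace property and dual basis.} Once (\ref{mult-for}) is established, the remaining assertions are short. For the trace, I will take $a=t_1^{k_1}\ldots t_n^{k_n}g_w$ and $b=t_1^{l_1}\ldots t_n^{l_n}g_{w'}$ and commute the middle framings through the adjacent $g$ in both $ab$ and $ba$ so as to bring them into the shape of (\ref{mult-for}); both $\btau(ab)=1$ and $\btau(ba)=1$ will collapse to the symmetric pair of conditions $w'=w^{-1}$ and $k_i+l_{w'(i)}\equiv 0\pmod{d}$ for all $i$, giving $\btau(ab)=\btau(ba)$. For the dual basis, after using the trace property just established to cycle $g_{w^{-1}}$ past the framings, a direct application of (\ref{mult-for}) to $\btau\bigl(t_1^{d-k_1+l_1}\ldots t_n^{d-k_n+l_n}\,g_{w'}\,g_{w^{-1}}\bigr)$ will force $w=w'$ and $k_i\equiv l_i\pmod{d}$, i.e.\ $\delta_{b,b'}$. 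Non-degeneracy of $\btau$, and hence the symmetrising property, follow at once.
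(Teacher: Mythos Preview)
Your proof is correct and follows essentially the same approach as the paper's. The only cosmetic differences are that the paper inducts on $\ell(w)$ rather than $\ell(w')$ and applies $\btau$ directly at each step of the induction rather than first isolating the identity coefficient $X_1^{(w,w')}$; the length-contradiction argument killing the $(q-q^{-1})e_i$ term and the subsequent derivation of the trace property and dual basis are the same in both.
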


\begin{proof} 
For the proof of the multiplication formula (\ref{mult-for}), we proceed by induction on the length $\ell(w)$ of $w$.
If $w=1$, there is nothing to prove. Now let $\ell(w)>0$ and choose a transposition $s_i \in \mathfrak{S}_n$ such that $\ell(ws_i)< \ell(w)$. By (\ref{rightmulti}), 
we have
$g_w=g_{(ws_i)s_i}=g_{ws_i}g_{s_i}$, and hence, $g_wg_{w'}=g_{ws_i}g_{s_i}g_{w'}$. Now we distinguish two cases:

\emph{Case 1. $\ell(s_iw') > \ell(w')$: }  By (\ref{leftmulti}), we have $g_{s_i}g_{w'}=g_{s_iw'}$. 
Note that $\ell(ws_i)< \ell(w)$ implies that $\ell(s_iw^{-1})< \ell(w^{-1})$. Thus 
we must have $w^{-1} \neq w'$ and, hence, $(ws_i)^{-1} \neq s_iw'$. By induction, we have 
$$\btau(t_1^{k_1}\ldots t_n^{k_n} g_wg_{w'}t_1^{l_1}\ldots t_n^{l_n})=\btau(t_1^{k_1}\ldots t_n^{k_n} g_{ws_i}g_{s_iw'}t_1^{l_1}\ldots t_n^{l_n})=0.$$

\emph{Case 2. $\ell(s_iw') < \ell(w')$: }  By (\ref{leftmulti}), we have $g_{s_i}g_{w'}=g_{s_iw'}+(q-q^{-1})e_ig_{w'}$, and hence,
$$\btau(t_1^{k_1}\!\ldots t_n^{k_n} g_wg_{w'}t_1^{l_1}\!\ldots t_n^{l_n})=\btau(t_1^{k_1}\!\ldots t_n^{k_n} g_{ws_i}g_{s_iw'}t_1^{l_1}\!\ldots t_n^{l_n})
+(q-q^{-1})\btau(t_1^{k_1}\!\ldots t_n^{k_n} g_{ws_i}e_ig_{w'}t_1^{l_1}\!\ldots t_n^{l_n}).$$
Due to the length inequalities, we have $(ws_i)^{-1} \neq w'$, and induction hypothesis yields that
$$\btau(t_1^{k_1}\ldots t_n^{k_n} g_{ws_i}e_ig_{w'}t_1^{l_1}\ldots t_n^{l_n})=0.$$
Now, if $w^{-1} \neq w'$, then $(ws_i)^{-1} \neq s_iw'$, and by induction,
$$\btau(t_1^{k_1}\ldots t_n^{k_n} g_{ws_i}g_{s_iw'}t_1^{l_1}\ldots t_n^{l_n})=0.$$
On the other hand, if $w^{-1} = w'$, then $(ws_i)^{-1} = s_iw'$, and by induction,
$$\btau(t_1^{k_1}\ldots t_n^{k_n} g_{ws_i}g_{s_iw'}t_1^{l_1}\ldots t_n^{l_n})=\left\{\begin{array}{ll}1, & \text{if }\,\, k_i+l_i \equiv 0 \,{\rm mod }\,d \,\,\text{ for all }\,\, i=1,\ldots,n ;\\
0, & \text{otherwise.}\end{array}\right.$$

Now, $\btau$ is a trace function, because
$$\btau(t_1^{k_1}\ldots t_n^{k_n} g_wg_{w'}t_1^{l_1}\ldots t_n^{l_n})=\btau (g_{w'}t_1^{l_1}\ldots t_n^{l_n}t_1^{k_1}\ldots t_n^{k_n} g_w),$$
which is obtained using Formula (\ref{mult-for}) together with
$$\btau (g_{w'}t_1^{l_1}\ldots t_n^{l_n}t_1^{k_1}\ldots t_n^{k_n} g_w)=
\btau (g_{w'}t_1^{k_1+l_1}\ldots t_n^{k_n+l_n} g_w)=
\btau(t_{w'(1)}^{k_1+l_1}\ldots t_{w'(n)}^{k_n+l_n} g_{w'}g_w).$$

Finally, let $\mathcal{B}^\vee$ be the set 
$$
\mathcal{B}^\vee =
\left\{g_{w^{-1}}t_1^{d-k_1}\ldots t_n^{d-k_n} \,\left|\,\begin{array}{ll}w \in \mathfrak{S}_n, & k_1,\ldots,k_n \in {\Z}/d{\Z}\end{array}\right\}\right. .
$$ Then  $\mathcal{B}^\vee$  is also a basis for ${\rm Y}_{d,n}(q)$, and we have
$$\btau(t_1^{k_1}\ldots t_n^{k_n} g_wg_{w^{'-1}}t_1^{d-l_1}\ldots t_n^{d-l_n})=
\left\{\begin{array}{ll}1, & \text{if }\,\, w=w' \,\,\text{ and }\,\, k_i
\equiv l_i \,{\rm mod }\,d
 \,\,\text{ for all }\,\, i=1,\ldots,n ;\\
0, & \text{otherwise.}\end{array}\right..$$
This means that the bilinear form ${\rm Y}_{d,n}(q)  \times {\rm Y}_{d,n}(q) \rightarrow  \C[q,q^{-1}],\, (a,b) \mapsto \btau(ab)$ is non-degenerate, and 
that $\mathcal{B}^\vee$ is the dual basis to $\mathcal{B}$ with respect to $\btau$.
\end{proof}

We will call $\btau$ the {\em canonical symmetrising form on} ${\rm Y}_{d,n}(q)$, because  $\btau$ becomes the canonical symmetrising form on the group algebra of $G(d,1,n)$ for $q=\pm 1$.  

\begin{rem}{\rm
The map $\btau$ is a Markov trace on ${\rm Y}_{d,n}(q)$ in the sense of Juyumaya \cite{ju3}, with all parameters equal to zero.} 
\end{rem}

Finally, let $\alpha \in {\rm Y}_{d,n}(q)$. The element $\alpha$ can be written in a unique way as a linear combination of elements of $\mathcal{B}$, that is, $\alpha= \sum_{b \in \mathcal{B}} a_b b\,$
for some unique 
$a_b \in \C[q,q^{-1}].$ Then
$\btau(\alpha)=a_1$, the coefficient in front of the unit element of ${\rm Y}_{d,n}(q)$.

\subsection{Schur elements for ${\rm Y}_{d,n}(q)$} 
Recall that $\mathcal{P}(d,n)$ denotes the set of all $d$-partitions of $n$. 
The canonical symmetrising form $\btau$ can be extended to the split semisimple algebra
$\C(q){\rm Y}_{d,n}(q)$. Then we have
$$\btau=\sum_{\blambda \in \mathcal{P}(d,n)} \frac{1}{s_{\blambda}}\chi_{\blambda},$$
where $\chi_{\blambda}$ is the character of the irreducible representation $V_{\blambda}$ defined in Proposition \ref{prop-rep}, and 
$s_{\blambda} \in \C[q,q^{-1}]$ is the  Schur element of $V_{\blambda}$ with respect to $\btau$. 

Let $\blambda \in \mathcal{P}(d,n)$, and let $\cT$ be a standard $d$-tableau of shape $\blambda$.
 Let $E_{_{\cT}}$ be the minimal idempotent of $\C(q){\rm Y}_{d,n}(q)$ corresponding to $\cT$ (see Subsection \ref{ET}). Then we have 
\begin{equation}\label{schur}
\btau(E_{_{\cT}}) = \frac{1}{s_{\blambda}}.
\end{equation}
We will use the above formula in order to calculate the Schur elements for ${\rm Y}_{d,n}(q)$, together with Formula (\ref{idem-JM}) for $E_{_{\cT}}$, which we will slightly modify here.

 Let $\btheta$ be the $d$-node of $\cT$ with the number $n$ in it. Let ${\mathcal{U}}$ be the standard $d$-tableau obtained from $\cT$ by removing the $d$-node $\btheta$ and let $\bmu$ be the shape of ${\mathcal{U}}$. We recall the inductive formula 
 (\ref{idem-JM}) for the minimal idempotent $E_{_{\cT}}$ of $\C(q){\rm Y}_{d,n}(q)$ corresponding to $\cT$:
 $$E_{_{\cT}}=E_{_{{\mathcal{U}}}}\prod_{ \begin{array}{l}\scriptstyle{\btheta'\in 
{\mathcal{E}}_+(\bmu)}\\\scriptstyle{\cc(\btheta')\neq \cc(\btheta)}\end{array}}\hspace{-0.2cm} \frac{J_n-\cc(\btheta')}{\cc(\btheta)-\cc(\btheta')}\prod_{ \begin{array}{l}\scriptstyle{\btheta'\in 
{\mathcal{E}}_+(\bmu)}\\\scriptstyle{\pos(\btheta')\neq \pos(\btheta)}\end{array}  }\hspace{-0.2cm}\frac{t_n-\xi_{\pos(\btheta')}}{\xi_{\pos(\btheta)}-\xi_{\pos(\btheta')}}\ 
$$
with $E_{_{\cT_0}}=1$ for the unique $d$-tableau $\cT_0$ of size $0$. Now, let $\gad=\{\xi_1,\dots,\xi_d\}$ be the set of all $d$-th roots of unity (ordered arbitrarily). We have
$$E_{_{{\mathcal{U}}}} \prod_{ \begin{array}{l}\scriptstyle{\btheta'\in 
{\mathcal{E}}_+(\bmu)}\\\scriptstyle{\pos(\btheta')\neq \pos(\btheta)}\end{array}  }\hspace{-0.2cm}\frac{t_n-\xi_{\pos(\btheta')}}{\xi_{\pos(\btheta)}-\xi_{\pos(\btheta')}}
 =  \sum_{ \begin{array}{l}\scriptstyle{\btheta'\in 
{\mathcal{E}}_+(\bmu)}\\\scriptstyle{\pos(\btheta')=\pos(\btheta)}\end{array}  }
E_{_{\mathcal{U}\cup\{\btheta'\}}}\,\,\,=\,\,E_{_{{\mathcal{U}}}} \prod_{\begin{array}{l}\scriptstyle{\xi\in 
\gad}\\\scriptstyle{\xi\neq \xi_{\pos(\btheta)}}\end{array}  }\hspace{-0.2cm}\frac{t_n-\xi}{\xi_{\pos(\btheta)}-\xi}$$
We deduce that (\ref{idem-JM}) is equivalent to
\begin{equation}\label{idem-JM2}
E_{_{\cT}}=E_{_{{\mathcal{U}}}}\prod_{ \begin{array}{l}\scriptstyle{\btheta'\in 
{\mathcal{E}}_+(\bmu)}\\\scriptstyle{\cc(\btheta')\neq \cc(\btheta)}\\\scriptstyle{\pos(\btheta')=\pos(\btheta)}\end{array}}\hspace{-0.2cm} \frac{J_n-\cc(\btheta')}{\cc(\btheta)-\cc(\btheta')}\prod_{\begin{array}{l}\scriptstyle{\xi\in 
\gad}\\\scriptstyle{\xi\neq \xi_{\pos(\btheta)}}\end{array}  }\hspace{-0.2cm}\frac{t_n-\xi}{\xi_{\pos(\btheta)}-\xi}\ .
\end{equation}

In Equation (\ref{idem-JM2}) set
$$E_n^{\cc}:=\prod_{\ \begin{array}{l}\scriptstyle{\btheta'\in 
{\mathcal{E}}_+(\bmu)}\\\scriptstyle{\cc(\btheta')\neq \cc(\btheta)}\\\scriptstyle{\pos(\btheta')=\pos(\btheta)}\end{array}}\hspace{-0.2cm} \frac{J_n-\cc(\btheta')}{\cc(\btheta)-\cc(\btheta')}
\,\,\,\,\,\text{ and }\,\,\,\,\,
E_n^{\pos}:=\prod_{ \begin{array}{l}\scriptstyle{\xi\in 
\gad}\\\scriptstyle{\xi\neq \xi_{\pos(\btheta)}}\end{array}  }\hspace{-0.2cm}\frac{t_n-\xi}{\xi_{\pos(\btheta)}-\xi}\ .
$$
The elements $E_n^{\cc}$ and $E_n^{\pos}$ are both idempotents, and they commute with each other (due to the commutativity of the Jucys--Murphy elements).
We repeat this process for the idempotent $E_{_{{\mathcal{U}}}}$ and so on, that is, we set, for all $i=1,\dots,n-1$, 
$$E_i^{\cc}:=\prod_{ \begin{array}{l}\scriptstyle{\btheta'\in 
{\mathcal{E}}_+(\bmu_{i-1})}\\\scriptstyle{\cc(\btheta')\neq \cc(\btheta_i)}\\\scriptstyle{\pos(\btheta')=\pos(\btheta_i)}\end{array}}\hspace{-0.2cm} \frac{J_i-\cc(\btheta')}{\cc(\btheta_i)-\cc(\btheta')}
\,\,\,\,\,\text{ and }\,\,\,\,\,
E_i^{\pos}:=\prod_{ \begin{array}{l}\scriptstyle{\xi\in 
\gad}\\\scriptstyle{\xi\neq \xi_{\pos(\btheta_i)}}\end{array}  }\hspace{-0.2cm}\frac{t_i-\xi}{\xi_{\pos(\btheta_i)}-\xi}\ ,
$$
where $\btheta_i$ is the $d$-node of $\cT$ with the number $i$ in it, and $\bmu_{i-1}$ is the shape of the standard $d$-tableau obtained from $\cT$ by removing the $d$-nodes $\btheta_i,\btheta_{i+1},\ldots,\btheta_n$.

By construction, the inductive formula (\ref{idem-JM2}) reads now
$$E_{_{\cT}}=E_1^{\cc}E_1^{\pos}\,E_2^{\cc}E_2^{\pos}\ldots E_{n-1}^{\cc}E_{n-1}^{\pos}\,E_{n}^{\cc}E_{n}^{\pos}. $$
Set
$$E_{_{\cT}}^{\cc}:=E_{1}^{\cc}E_{2}^{\cc}\ldots E_{n-1}^{\cc}E_n^{\cc} \,\,\,\,\,\text{and}\,\,\,\,\,
E_{_{\cT}}^{\pos}:=E_{1}^{\pos}E_{2}^{\pos}\ldots E_{n-1}^{\pos}E_n^{\pos}.$$
Then
\begin{equation}\label{form-Et} E_{_{\cT}}=E_{_{\cT}}^{\pos}E_{_{\cT}}^{\cc}\,.\end{equation}
The idempotent $E_{_{\cT}}^{\pos}$ determines the position of each $d$-node, while $E_{_{\cT}}^{\cc}$ 
determines 
the content of each $d$-node (and thus, its position in the Young diagram of the partition specified by $E_{_{\cT}}^{\pos}$) in $\cT$. By definition of $E_{_{\cT}}^{\pos}$, for all $i=1,\ldots,n$, we have that 
\begin{equation}\label{comm-Epos}
t_iE_{_{\cT}}^{\pos}=E_{_{\cT}}^{\pos}t_i=\xi_{\pos(\cT|i)}E_{_{\cT}}^{\pos},
\end{equation}
 and hence,
 \begin{equation}\label{multi-Epos}
  e_{i,k}E_{_{\cT}}^{\pos}=E_{_{\cT}}^{\pos}e_{i,k}=\left\{ \begin{array}{ll} E_{_{\cT}}^{\pos}, & \text{ if }\,\, \pos(\cT|i)=\pos(\cT|k)\\
 0, & \text{ if }\,\, \pos(\cT|i)\neq\pos(\cT|k)\\ \end{array}\right..
 \end{equation}
 Finally, it is easy to check that 
 \begin{equation}\label{d^n}
 \btau(E_{_{\cT}}^{\pos})=   \prod_{\btheta\in\blambda} \prod_{ \begin{array}{l}\scriptstyle{\xi\in 
\gad}\\\scriptstyle{\xi\neq \xi_{\pos(\btheta)}}\end{array}  }\hspace{-0.2cm}\frac{-\xi}{\xi_{\pos(\btheta)}-\xi}\ =
   \prod_{\btheta\in\blambda} \frac{1}{d} 
  =\frac{1}{d^n}, 
 \end{equation}
since $\prod_{\xi \in \gad\backslash\{1\}}(1-\xi)=d$.

\vskip .1cm
Before we determine the Schur elements for ${\rm Y}_{d,n}(q)$ with respect to $\btau$, we introduce the following notation:
Let $\lambda=(\lambda_1,\dots,\lambda_k)$ be a partition and let $\lambda'=(\lambda'_1,\dots,\lambda'_l)$ be the conjugate partition of $\lambda$. 
We set 
$$\eta(\lambda):=\sum_{i=1}^k (i-1)\lambda_i= \frac{1}{2}\sum_{j=1}^l{\lambda'_j(\lambda'_j-1)}\ .$$ 
Now let $\blambda=(\blambda^{(1)},\ldots,\blambda^{(d)})$ be a $d$-partition of $n$. We set $\eta(\blambda):=\sum_{i=1}^d \eta(\blambda^{(i)})$. Moreover, recall that, for any $d$-node $\btheta$ of $\blambda$, we denote by ${\rm hl}(\btheta)$ the hook length of $\btheta$ (see (\ref{hl})). 

\begin{prop}\label{Schur elements}
Let $\blambda \in \mathcal{P}(d,n)$.
We have
 \begin{equation}\label{schur-multipartition}
s_{\blambda}= d^n\,q^{-2\eta(\blambda)}\, \prod_{\btheta\in\blambda} [{\rm hl}(\btheta)]_{q^2}\ ,
\end{equation}
where, for all $h \in \mathbb{N}$, $[h]_{q^2} := (q^{2h}-1)/(q^2-1) = q^{2(h-1)} + q^{2(h-2)} +\cdots+ q^2 + 1$.
\end{prop}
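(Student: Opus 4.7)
By (\ref{schur}) it suffices to compute $\btau(E_{_{\cT}})=1/s_{\blambda}$ for some standard $d$-tableau $\cT$ of shape $\blambda$. The plan is to exploit the factorisation $E_{_{\cT}}=E_{_{\cT}}^{\pos}E_{_{\cT}}^{\cc}$ of (\ref{form-Et}), choosing $\cT$ so that $E_{_{\cT}}^{\cc}$ becomes, up to $E_{_{\cT}}^{\pos}$, a product of Iwahori--Hecke primitive idempotents associated to the individual components $\blambda^{(k)}$.

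Write $\blambda=(\blambda^{(1)},\dots,\blambda^{(d)})$, $n_k:=|\blambda^{(k)}|$ and $N_k:=n_1+\dots+n_k$, and fix $\cT$ to be the standard $d$-tableau whose $k$-th component is filled with the numbers $N_{k-1}+1,\dots,N_k$; let $\cT^{(k)}$ be the associated standard tableau of $\blambda^{(k)}$ with entries in $\{1,\dots,n_k\}$. With this choice $\pos(\cT|i)=k$ precisely when $N_{k-1}<i\leq N_k$, and (\ref{multi-Epos}) gives $e_{j,i}E_{_{\cT}}^{\pos}=E_{_{\cT}}^{\pos}$ when $j,i$ lie in the same block and $0$ otherwise. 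Let $\mathcal{H}_{n_k}(q)$ denote the subalgebra of ${\rm Y}_{d,n}(q)$ generated by $g_{N_{k-1}+1},\dots,g_{N_k-1}$: modulo $E_{_{\cT}}^{\pos}$ each such $g_i$ obeys the classical Iwahori--Hecke quadratic relation (since $e_iE_{_{\cT}}^{\pos}=E_{_{\cT}}^{\pos}$ in this range), and $E_{_{\cT}}^{\pos}$ commutes with each such $g_i$ (as the transposition $s_i$ leaves the positions invariant inside a block).

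The central technical step is the identity
\[
J_iE_{_{\cT}}^{\pos}=J_{i-N_{k-1}}^{\mathcal{H}_{n_k}}E_{_{\cT}}^{\pos}\qquad\text{for }N_{k-1}<i\leq N_k,
\]
where $J^{\mathcal{H}_{n_k}}$ denotes the Jucys--Murphy element of $\mathcal{H}_{n_k}(q)$. Starting from (\ref{form-JM}), each summand indexed by $j\leq N_{k-1}$ carries a factor $e_{j,i}$ which annihilates $E_{_{\cT}}^{\pos}$; the surviving summands, indexed by $N_{k-1}<j<i$, reassemble the Iwahori--Hecke Jucys--Murphy element. Substituting into the definition of $E_{_{\cT}}^{\cc}$ yields
\[
E_{_{\cT}}^{\cc}E_{_{\cT}}^{\pos}=\Bigl(\prod_{k=1}^{d}E_{_{\cT^{(k)}}}^{\mathcal{H}}\Bigr)E_{_{\cT}}^{\pos},
\]
where $E_{_{\cT^{(k)}}}^{\mathcal{H}}\in\mathcal{H}_{n_k}(q)$ is the primitive idempotent of the Iwahori--Hecke algebra of type $A$ attached to $\cT^{(k)}$; the factors commute since they involve disjoint sets of $g_i$'s.

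To take $\btau$, expand $\prod_kE_{_{\cT^{(k)}}}^{\mathcal{H}}=\sum_{w\in\mathfrak{S}_{n_1}\times\dots\times\mathfrak{S}_{n_d}}c_wg_w$: applying (\ref{schur}) to each Iwahori--Hecke factor separately gives $c_1=\prod_k 1/s_{\blambda^{(k)}}^{\mathcal{H}}$, where $s_{\mu}^{\mathcal{H}}$ denotes the Iwahori--Hecke Schur element of a partition $\mu$. On the other hand, $E_{_{\cT}}^{\pos}$ is a polynomial in $t_1,\dots,t_n$ whose constant term equals $1/d^n$ (by the computation of (\ref{d^n})), so (\ref{defn of btau}) yields $\btau(E_{_{\cT}}^{\pos}g_w)=\delta_{w,1}/d^n$ for all $w\in\mathfrak{S}_n$, and therefore
\[
\frac{1}{s_{\blambda}}=\btau(E_{_{\cT}})=\frac{c_1}{d^n}=\frac{1}{d^n}\prod_{k=1}^{d}\frac{1}{s_{\blambda^{(k)})}^{\mathcal{H}}}.
\]
Formula (\ref{schur-multipartition}) then follows by substituting the well-known expression $s_{\mu}^{\mathcal{H}}=q^{-2\eta(\mu)}\prod_{\theta\in\mu}[\mathrm{hl}(\theta)]_{q^2}$ for the Iwahori--Hecke Schur elements in type $A$, and using the additivity $\eta(\blambda)=\sum_k\eta(\blambda^{(k)})$. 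The main obstacle is the Jucys--Murphy identification highlighted above: it requires carefully tracking how $E_{_{\cT}}^{\pos}$ interacts with the $e_{j,i}$'s and the braid elements appearing in the inductive expansion (\ref{form-JM}).
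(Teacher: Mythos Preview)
Your proposal is correct and follows essentially the same approach as the paper's proof: both choose the standard $d$-tableau $\cT$ with entries grouped by position, use the factorisation $E_{_{\cT}}=E_{_{\cT}}^{\pos}E_{_{\cT}}^{\cc}$, exploit (\ref{form-JM}) together with (\ref{multi-Epos}) to reduce each Jucys--Murphy element (after multiplication by $E_{_{\cT}}^{\pos}$) to an Iwahori--Hecke Jucys--Murphy element, and then identify the resulting content factors with primitive idempotents of the type-$A$ Iwahori--Hecke algebras $\mathcal{H}_{n_k}(q)$, so that $s_{\blambda}=d^n\prod_k s_{\blambda^{(k)}}^{\mathcal{H}}$. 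The only presentational difference is that the paper formalises the passage to the Iwahori--Hecke algebra via explicit epimorphisms $\varphi_i: E_{_{\cT}}^{\pos}\mathcal{A}^{(i)}\to \C(q)\mathcal{H}_{n_i}(q)$, whereas you phrase it as working ``modulo $E_{_{\cT}}^{\pos}$''; your informal identification of the subalgebra generated by $g_{N_{k-1}+1},\dots,g_{N_k-1}$ with $\mathcal{H}_{n_k}(q)$ is literally only valid after this localisation, but since you say so explicitly there is no gap.
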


\begin{proof}
Let $n_i$ be the size of the partition $\blambda^{(i)}$, for all $i=1,\ldots,d$. Let $\cT$ be a standard $d$-tableau of shape $\blambda$.
In order to facilitate the computation of $\btau(E_{_{\cT}})$, we will assume that 
\begin{center}
$\pos_1=\cdots=\pos_{n_1}=1,\ $ $\pos_{n_1+1}=\cdots=\pos_{n_1+n_2}=2,\ $ 
$\ldots\,,\ $ $\pos_{n_1+\cdots+n_{d-1}+1}=\cdots=\pos_{n}=d$,
\end{center}
where we have set, for brevity, $\pos_j:=\pos(\cT|j)$ for all $j=1,\dots,n$.

Let $i \in \{1,\ldots,d\}$. Set $m_i:=n_1 + \cdots + n_{i-1}$.  Let $\mathcal{A}^{(i)}$ be the subalgebra of $\C(q){\rm Y}_{d,n}(q)$ generated by the elements
$$t_{m_i+1}, t_{m_i+2},\ldots,t_{m_i+n_i}, g_{m_i+1},g_{m_i+2},\ldots,g_{m_i+n_i-1}.$$
Then $\mathcal{A}^{(i)}$ is isomorphic to $\C(q){\rm Y}_{d,n_i}(q)$.
By the commutativity of  $t_1,\ldots,t_n$,
the element $E_{_{\cT}}^{\pos}$ commutes
with the elements $t_{m_i+1}, t_{m_i+2},\ldots,t_{m_i+n_i}$. Now, 
since
$$\pos_{m_i+k}=\pos_{m_i+k+1}=i$$
for all $k=1,2,\ldots,n_i-1$, we have $E_{_{\cT}}^{\pos}g_{m_i+k}=g_{m_i+k}E_{_{\cT}}^{\pos}$, due to the defining relation (\ref{modular})$\mathrm{(f}_2)$.
Thus,
the element $E_{_{\cT}}^{\pos}$ commutes with all elements of $\mathcal{A}^{(i)}$. Moreover, due to (\ref{multi-Epos}), we have
\begin{equation}\label{hecke-quadr}E_{_{\cT}}^{\pos} g_{m_i+k}^2=  E_{_{\cT}}^{\pos} \left(1+(q-q^{-1}) g_{m_i+k} \right).\end{equation}
As a consequence, we obtain a $\C(q)$-algebra epimorphism
$\varphi_i$ from the algebra $E_{_{\cT}}^{\pos} \mathcal{A}^{(i)} $ to the Iwahori--Hecke algebra $\C(q)\mathcal{H}_{n_i}(q)$ of type $A$,
whose generators we denote by $G_{m_i+1},\ldots,G_{m_i+n_i-1}$, defined by
\begin{equation}\label{phi}
\varphi_i(E_{_{\cT}}^{\pos})=1,\,\,\,\,\varphi_i(E_{_{\cT}}^{\pos}t_{m_i+j})=\xi_i\,\,\, \text{ and }\,\,\,\varphi_i(E_{_{\cT}}^{\pos}g_{m_i+k})=G_{m_i+k},
\end{equation}
for all $j=1,2,\ldots,n_i$ and for all $k=1,2,\ldots,n_i-1$.

Due to (\ref{form-JM}) and (\ref{multi-Epos}), we also have that, for all $j=1,2,\ldots,n_i$,  the element $E_{_{\cT}}^{\pos} J_{m_i+j}$ can be written as the product of $E_{_{\cT}}^{\pos}$ with a linear combination of elements of the form $g_w$, where $w \in \langle s_{m_i+1}, s_{m_i+2},\ldots,s_{m_i+n_i-1} \rangle \cong \mathfrak{S}_{n_i}$. As a consequence, the same holds for the element
$$E^{(i)}:=E_{_{\cT}}^{\pos} \prod_{j=1}^{n_i}E^{\cc}_{m_i+j}\,.$$
That is, we have a linear combination $\tilde{E}^{(i)}$ of elements of the form $g_w$, where $w \in \langle s_{m_i+1}, s_{m_i+2},\ldots,$ $s_{m_i+n_i-1} \rangle$, such that
\begin{equation}\label{Ei}
E^{(i)}=E_{_{\cT}}^{\pos}\tilde{E}^{(i)}.
\end{equation}
In particular, $E^{(i)}\in E_{_{\cT}}^{\pos} \mathcal{A}^{(i)}$. 
Moreover, we have
\begin{equation}\label{Eti}\varphi_i(E^{(i)})= e_{T_i}\ ,\end{equation}
where $T_i$ is the standard tableau of shape $\blambda^{(i)}$ such that $\cc(T_i|j)=\cc(\cT|m_i+j)$ for all $j=1,\ldots,n_i$,
and $e_{T_i}$ is the corresponding minimal idempotent in $\C(q)\mathcal{H}_{n_i}(q)$ (for an inductive description of $e_{T_i}$ in terms of the Jucys--Murphy elements of $\mathcal{H}_{n_i}(q)$, see, for example, \cite{IMO} or \cite[\S 5]{OPdA3}).

Now, for any $w \in \langle s_{m_i+1}, s_{m_i+2},\ldots,s_{m_i+n_i-1} \rangle$, we can define an element $G_w$ in  $\mathcal{H}_{n_i}(q)$
 in the same way that we defined the element $g_w$ in ${\rm Y}_{d,n}(q)$. The elements $G_w$, $w\in\langle s_{m_i+1}, s_{m_i+2},\ldots,s_{m_i+n_i-1} \rangle$, form the ``standard'' basis of $\mathcal{H}_{n_i}(q)$ as a $\C[q,q^{-1}]$-module, and we have
 $G_w=\varphi_i(E_{_{\cT}}^{\pos}g_w)$. 
Further, the algebra $\mathcal{H}_{n_i}(q)$  is endowed with a canonical symmetrising form $\tau_{i}$, such that, for all $a \in  \mathcal{H}_{n_i}(q)$, $\tau_{i}(a)$ is equal to the coefficient of $1$ when $a$ is expressed as a linear combination of the standard basis elements $G_w$.
From (\ref{Ei}) and (\ref{Eti}), we deduce that 
\begin{equation}\label{schur1}\btau(\tilde{E}^{(i)})=\tau_{i}(e_{T_i})\ .
\end{equation}

Note now that the expression (\ref{form-Et}) for $E_{_{\cT}}$ can be rewritten as
\begin{equation}\label{schur2}E_{_{\cT}}=E_{_{\cT}}^{\pos}{E}^{(1)}{E}^{(2)}\ldots{E}^{(d)}=E_{_{\cT}}^{\pos}\tilde{E}^{(1)}\tilde{E}^{(2)}\ldots\tilde{E}^{(d)}\ .\end{equation}
As we have seen, for all $i=1,\ldots,d$, $\tilde{E}^{(i)}$ is a linear combination of elements of the form $g_w$, where $w \in \langle s_{m_i+1}, s_{m_i+2},\ldots,s_{m_i+n_i-1} \rangle$. Note that for $i,i' \in \{1,\dots,d\}$ such that $i < i'$, the elements of $\langle s_{m_i+1}, s_{m_i+2},\ldots,s_{m_i+n_i-1} \rangle$ commute with the elements of $\langle s_{m_{i'}+1}, s_{m_{i'}+2},\ldots,s_{m_{i'}+n_{i'}-1} \rangle$, since 
$m_{i'}\geq m_i+n_i$. 
This together with 
(\ref{schur1}) and (\ref{schur2})  imply that 
$$\btau(E_{_{\cT}})= \btau(E_{_{\cT}}^{\pos}) \btau(\tilde{E}^{(1)})\btau(\tilde{E}^{(2)})\ldots \btau(\tilde{E}^{(d)})
=\btau(E_{_{\cT}}^{\pos})\tau_1(e_{T_1})\tau_2(e_{T_2})\dots \tau_d(e_{T_d})\ .$$

For $i=1,\dots,d$, the element $\tau_i(e_{T_i})$ is equal to $1/s_{\blambda^{(i)}}$, where $s_{\blambda^{(i)}}$ is the Schur element
for $\mathcal{H}_{n_i}(q)$ associated to the partition $\blambda^{(i)}$ with respect to $\tau_i$. The
form of the Schur element $s_{\blambda^{(i)}}$ is already known (see, for example, \cite[Theorem 3.2]{ChJa}):
 \begin{equation}\label{schur-partition}
 s_{\blambda^{(i)}} = q^{-2\eta(\blambda^{(i)})}\, \prod_{\theta \in \blambda^{(i)}} [{\rm hl}(\theta)]_{q^2}.
 \end{equation}

Using (\ref{d^n}), we conclude that
$$\btau(E_{_{\cT}})= \frac{1}{d^n s_{\blambda^{(1)}} s_{\blambda^{(2)}} \cdots s_{\blambda^{(d)}}},$$
 whence we deduce that
$$s_{\blambda}=d^n s_{\blambda^{(1)}} s_{\blambda^{(2)}} \cdots s_{\blambda^{(d)}},$$
where we take $s_\emptyset:=1$.
Applying (\ref{schur-partition}) yields  (\ref{schur-multipartition}). 
\end{proof}

\begin{rem}{\rm
Theorem \ref{gepf}, combined with the description of the Schur elements for ${\rm Y}_{d,n}(q)$ 
given by the proposition above, yields the semisimplicity criterion that we proved in the previous section.}
 \end{rem}
 
 Finally, let $\btheta=(x,y,k)$ be a $d$-node of the $d$-partition $\blambda$ of $n$. We define the  \emph{classical content} of $\btheta$ to be the integer $\ccc(\btheta):=y-x$. The following corollary of Proposition \ref{Schur elements} gives an alternative formula for the Schur elements of  ${\rm Y}_{d,n}(q)$, involving the classical contents of the $d$-nodes instead of the function $\eta$.

\begin{cor}  
Let $\blambda \in \mathcal{P}(d,n)$. We have
\begin{equation}\label{schur-multipartition2}
s_{\blambda}= d^n\, \prod_{\btheta\in\blambda} q^{\ccc(\btheta)}\{{\rm hl}(\btheta)\}_q\ ,
\end{equation}
where, for all $h \in \mathbb{N}$, $\{h\}_q := (q^{h}-q^{-h})/(q-q^{-1}) = q^{(h-1)} + q^{(h-2)} +\cdots+ q^{-(h-2)} + q^{-(h-1)}$. 
\end{cor}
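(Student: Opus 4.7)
The plan is to deduce the alternative formula directly from (\ref{schur-multipartition}) by rewriting the quantum integers $[h]_{q^2}$ in terms of the symmetric quantum integers $\{h\}_q$ and then checking that the resulting exponent of $q$ matches the sum of classical contents. The key algebraic identity is
$$[h]_{q^2}\;=\;\frac{q^{2h}-1}{q^2-1}\;=\;\frac{q^{h}(q^{h}-q^{-h})}{q(q-q^{-1})}\;=\;q^{h-1}\{h\}_q.$$
Applying this to each factor in (\ref{schur-multipartition}) gives
$$s_{\blambda}\;=\;d^n\,q^{-2\eta(\blambda)+\sum_{\btheta\in\blambda}({\rm hl}(\btheta)-1)}\prod_{\btheta\in\blambda}\{{\rm hl}(\btheta)\}_q,$$
so what remains is the purely combinatorial identity
$$-2\eta(\blambda)+\sum_{\btheta\in\blambda}\bigl({\rm hl}(\btheta)-1\bigr)\;=\;\sum_{\btheta\in\blambda}\ccc(\btheta).\qquad(\star)$$

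Next, since $\eta$, the classical content, and the hook length of a $d$-node all depend only on the component in which it lies, the identity $(\star)$ is additive over the $d$ components of $\blambda$. Hence it suffices to prove it for an ordinary partition $\lambda=(\lambda_1,\dots,\lambda_k)$, where it reads
$$-2\eta(\lambda)+\sum_{\theta\in\lambda}\bigl({\rm hl}(\theta)-1\bigr)\;=\;\sum_{\theta\in\lambda}\ccc(\theta),$$
with $\eta(\lambda)=\sum_i (i-1)\lambda_i$ as defined in the paper.

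I then invoke two standard identities for a partition $\lambda$ with conjugate $\lambda'$. Writing $n(\lambda):=\eta(\lambda)=\sum_i(i-1)\lambda_i$ and noting that $n(\lambda')=\sum_j(j-1)\lambda'_j=\sum_i\binom{\lambda_i}{2}$ (by counting cells column by column), a direct computation gives
$$\sum_{\theta\in\lambda}\ccc(\theta)\;=\;\sum_i\sum_{y=1}^{\lambda_i}(y-i)\;=\;\sum_i\tfrac{\lambda_i(\lambda_i+1)}{2}-\sum_i i\lambda_i\;=\;n(\lambda')-n(\lambda).$$
For the hook-length sum, the well-known identity
$$\sum_{\theta\in\lambda}{\rm hl}(\theta)\;=\;n(\lambda)+n(\lambda')+|\lambda|$$
(which follows by decomposing ${\rm hl}(\theta)=(\lambda_x-y)+(\lambda'_y-x)+1$ and summing) yields $\sum_\theta({\rm hl}(\theta)-1)=n(\lambda)+n(\lambda')$. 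Subtracting, we obtain
$$-2n(\lambda)+n(\lambda)+n(\lambda')\;=\;n(\lambda')-n(\lambda),$$
which is exactly $(\star)$ in the single-partition case and completes the proof.

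There is no serious obstacle here: the result is really a bookkeeping reformulation of Proposition \ref{Schur elements}, and the two classical identities on $n(\lambda)$, $n(\lambda')$, contents, and hook lengths are elementary. The only mild subtlety is to keep the exponents of $q$ straight when converting between $[\,\cdot\,]_{q^2}$ and $\{\,\cdot\,\}_q$; once the identity $[h]_{q^2}=q^{h-1}\{h\}_q$ is in hand, the rest is immediate.
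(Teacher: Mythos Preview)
Your proof is correct and follows essentially the same route as the paper: both reduce the equivalence of the two Schur-element formulas to the combinatorial identity $2\eta(\lambda)=\sum_{\theta\in\lambda}\bigl({\rm hl}(\theta)-1-\ccc(\theta)\bigr)$ for a single partition, which is your $(\star)$ rearranged. The only difference is organisational: the paper verifies this identity by a single direct double-sum computation showing both sides equal $\sum_j(\lambda'_j)^2-n$, whereas you split it into the two standard facts $\sum_\theta\ccc(\theta)=n(\lambda')-n(\lambda)$ and $\sum_\theta{\rm hl}(\theta)=n(\lambda)+n(\lambda')+|\lambda|$ and subtract.
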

\begin{proof}
In order to prove that (\ref{schur-multipartition2}) is equivalent to (\ref{schur-multipartition}), it is enough to show that, for any {partition $\lambda$}, we have
\begin{equation}\label{form-part}
2\eta(\lambda)=\sum_{\theta\in\lambda}\bigl({\rm hl}(\theta)-1-\ccc(\theta)\bigr)\ .
\end{equation}
Let $\lambda=(\lambda_1,\dots,\lambda_k)$ be a partition of $n$ and let $\lambda'=(\lambda'_1,\dots,\lambda'_l)$ be the conjugate partition. Then 
$$2\eta(\lambda)=\sum_{j=1}^l\lambda'_j(\lambda'_j-1)=\sum_{j=1}^l(\lambda'_j)^2\,-\,n\ .
$$
If $\theta=(i,j)$ is a node of $\lambda$, then ${\rm hl}(\theta)=\lambda_i-i+\lambda_j'-j+1$ and $\ccc(\theta)=j-i$. We obtain that
$$\sum_{\theta\in\lambda}\bigl({\rm hl}(\theta)-1-\ccc(\theta)\bigr)=\sum_{i=1}^k\sum_{j=1}^{\lambda_i}(\lambda'_j+\lambda_i-2j)=\sum_{i=1}^k\sum_{j=1}^{\lambda_i}\lambda'_j+\sum_{i=1}^k\bigl(\lambda_i^2-\lambda_i(\lambda_i+1)\bigr)
=\sum_{i=1}^k\sum_{j=1}^{\lambda_i}\lambda'_j-n\ .
$$
Now, since $\lambda'_{j}=\#\{m\,|\,1 \leq m \leq k \text{ such that } \lambda_m\geq j\}$, it is easy to see that $$\sum\limits_{i=1}^k\sum\limits_{j=1}^{\lambda_i}\lambda'_j=\sum\limits_{j=1}^{l}(\lambda'_j)^2.$$ Therefore, (\ref{form-part}) holds.
\end{proof}

\end{document}